\numberwithin{equation}{section}
\newtheorem{theorem}{Theorem}[section]
\newtheorem{lemma}[theorem]{Lemma}
\newtheorem{definition}[theorem]{Definition}
\newtheorem{remark}[theorem]{Remark}
\newtheorem{proposition}[theorem]{Proposition}
\newtheorem{corollary}[theorem]{Corollary}
\newcommand{\R}{\mathbb{R}}
\newcommand{\Z}{\mathbb{Z}}
\newcommand{\N}{\mathbb{N}}
\renewcommand{\epsilon}{\varepsilon}
\renewcommand{\leq}{\leqslant}
\renewcommand{\le}{\leqslant}
\renewcommand{\ge}{\geqslant}
\title{Chaotic orbits for systems of nonlocal equations}
\author{Serena Dipierro}
\address[Serena Dipierro]{School of Mathematics and Statistics,
University of Melbourne, 813 Swanston St, Parkville VIC 3010, Australia,
and School of Mathematics and Statistics,
University of Western Australia,
35 Stirling Highway,
Crawley, Perth
WA 6009, Australia,
and Weierstra{\ss} Institut f\"ur Angewandte Analysis
und Stochastik, Mohrenstrasse 39, 10117 Berlin, Germany}
\email{serena.dipierro@ed.ac.uk}
\author{Stefania Patrizi}
\address[Stefania Patrizi]{The University of Texas at Austin,
Department of Mathematics, 2515 Speedway, Austin, TX 78751, USA}
\email{spatrizi@math.utexas.edu}
\author{Enrico Valdinoci}
\address[Enrico Valdinoci]{School of Mathematics and Statistics,
University of Melbourne, 813 Swanston St, Parkville VIC 3010, Australia,
and School of Mathematics and Statistics,
University of Western Australia,
35 Stirling Highway,
Crawley, Perth
WA 6009, Australia,
and Weierstra{\ss} Institut f\"ur Angewandte Analysis
und Stochastik, Mohrenstrasse 39, 10117 Berlin, Germany,
and 
Universit\`a degli studi di Milano,
Dipartimento di Matematica, 
Via Saldini 50, 20133 Milan, Italy, and
Istituto di Matematica Applicata e Tecnologie Informatiche,
Consiglio Nazionale delle Ricerche,
Via Ferrata 1, 27100 Pavia, Italy}
\email{enrico@mat.uniroma3.it}
\begin{document}

\begin{abstract}
We consider a system of nonlocal equations driven by a perturbed
periodic potential. 
We construct multibump solutions that connect
one integer point to another one in a prescribed way.
In particular, heteroclinic, homoclinic 
and chaotic trajectories are constructed.

This is the first attempt to consider a nonlocal version
of this type of
dynamical systems in a variational setting and the first result
regarding symbolic dynamics in a fractional framework.
\end{abstract}

\maketitle

\section{Introduction}

Goal of this paper is to construct heteroclinic and multibumps orbits
for a class of systems of integrodifferential equations.
The forcing term of the equation comes from a multiwell potential
(for simplicity, say periodic and centered at integer points,
though more general potential with a discrete
set of minima may be similarly taken into account).

The solutions constructed connect the equilibria of the potential
in a rather arbitrary way and thus reveal a chaotic behavior 
of the problem into consideration.\medskip

More precisely, the mathematical framework that we consider is the following.
Given~$s\in\left(\frac12,1\right)$, we consider
an interaction kernel~$K:\R\to[0,+\infty]$,
satisfying the structural assumptions~$K(-x)=K(x)$,
\begin{equation}\label{KERNEL}
\frac{\theta_0 \;(1-s)\;\chi_{[-\rho_0,\rho_0]} (x)}{|x|^{1+2s}}
\le K(x)\le \frac{\Theta_0 \;(1-s)}{|x|^{1+2s}}
\end{equation}
for some~$\rho_0\in (0,1]$ and~$\Theta_0\ge\theta_0>0$, and
\begin{equation}\label{KERNEL:2}
|\nabla K(x)|\le \frac{\Theta_1}{|x|^{2+2s}}
\end{equation}
for some~$\Theta_1>0$.

We consider\footnote{Of course, \label{FOO:678uhauu}
for a fixed~$s\in\left(\frac12,1\right)$,
the quantity~$(1-s)$ in~\eqref{KERNEL} does not play any role,
since it can be reabsorbed into~$\theta_0$ and~$\Theta_0$. The advantage
of extrapolating this quantity explicitly is that, in this way,
all the quantities involved in this paper will be bounded uniformly as~$s\to1$,
i.e., fixed~$s_0\in \left(\frac12,1\right)$ and given any~$s\in[s_0,1)$,
the constants will depend only on~$s_0$, and not explicitly on~$s$.
This technical improvement plays often an important role
in the study of nonlocal equations, see e.g.~\cite{byapp}, and allows
us to comprise
the classical case of the second derivative as a limit case of our results.}
the energy associated to such interaction kernel: namely,
for any measurable function~$Q:\R\to\R^n$, with~$n\in\N$, $n\ge1$,
we define
\begin{equation}\label{def E:Q}
E(Q) := \iint_{\R\times\R} K(x-y)\,\big|Q(x)-Q(y) \big|^2\,dx\,dy.\end{equation}
Our goal is to take into account the 
integrodifferential equation
satisfied by the critical points of~$E$. 

For this, given
an interval~$J\subseteq\R$,
a measurable function~$Q:\R\to\R^n$, with~$E(Q)<+\infty$, 
and~$f\in L^1(J,\R^n)$ we say that~$Q$
is a solution of
\begin{equation} \label{EQUAZIONE}
{\mathcal{L}}(Q)(x)+f(x)=0\end{equation}
if
\begin{equation}\label{U756GA}
2\iint_{\R\times\R} K(x-y)\,
\big(Q(x)-Q(y) \big)\cdot \big(\psi(x)-\psi(y) \big)
\,dx\,dy +\int_\R f(x)\cdot\psi(x)\,dx=0,\end{equation}
for any~$\psi\in C^\infty_0(J,\R^n)$. We remark that~\eqref{EQUAZIONE}
provides a single equation for~$n=1$
and a system\footnote{As a matter of fact, we observe that, with
minor modifications of our methods, one can also consider the
case in which each equation of the system is driven
by an integrodifferential operator of different order.}
of equations for $n\ge2$.\medskip

In the strong version, the operator~${\mathcal{L}}(Q)$
may be interpreted as the integrodifferential operator
$$ 4\int_\R K(x-y)\,(Q(x)-Q(y))\,dy,$$
with the singular integral taken in its principal value sense.\medskip

The prototype of the interaction kernel that we have in mind
is~$K(x):=\frac{1-s}{|x|^{1+2s}}$. In this case,
the operator~${\mathcal{L}}(Q)$ in~\eqref{EQUAZIONE}
is (up to multiplicative constants) the fractional Laplacian~$(-\Delta)^s Q$.

The setting considered in~\eqref{KERNEL}
is very general, since it comprises 
operators which are not necessarily homogeneous or isotropic.
\medskip

The particular equation that we consider in this paper is
\begin{equation}\label{EQUAZ}
{\mathcal{L}}(Q)(x) + a(x)\,\nabla W(Q(x)) =0 \qquad{\mbox{
for any }} x\in\R.\end{equation}
We suppose that~$W
\in C^{1,1}(\R^n)$
and that it is periodic of period~$1$,
that is~$W(\tau+\zeta)=W(\tau)$ for any~$\tau\in\R^n$ and~$\zeta\in\Z^n$. 

We also assume that
the minima of~$W$ are attained at the integers: namely we suppose that
\begin{equation}\label{ZERI di W}
{\mbox{$W(\zeta)=0$ 
for any~$\zeta\in\Z^n$ and that~$W(\tau)>0$
for any~$\tau\in\R^n\setminus\Z^n$.}}\end{equation}
Also, we suppose that the minima of~$W$ are ``nondegenerate''. More
precisely, we assume that
there exist
$r\in (0,\,1/4]$, $c_0 \in (0,1)$ and~$C_0\in(1,+\infty)$
such that
\begin{equation}\label{GROW}
c_0\, |\tau|^2 \le W(\tau)\le C_0\,|\tau|^2 \qquad{\mbox{
for any }}\tau\in B_r.
\end{equation}
These assumptions on~$W$ are indeed rather general and fit into the
well-established theory of multiwell potentials.

The function~$a$ can be considered
as a perturbation of the potential,
and many structural results
hold under the basic
conditions that~$a\in C^1(\R)$ with~$a'\in L^\infty(\R)$, and
that there exist~$\underline{a}\in (0,1)$ and~$\overline{a}\in(1,+\infty)$
such that
\begin{equation}\label{GROW:2}
\underline{a}\le a(x)\le \overline{a} \qquad{\mbox{
for any }}x\in\R.
\end{equation}
On the other hand,
to construct unstable orbits, one also assumes that $a$ satisfies
a ``nondegeneracy condition''. Several general hypotheses on~$a$ could
be assumed for this scope (see e.g. page~227 in~\cite{rabi2}),
but, to make a simple and concrete example, we stick to the case in which
\begin{equation}\label{ASS:A:NONDEG}
a(x):= a_1+a_2\cos(\epsilon x),
\end{equation}
with~$\epsilon>0$ to be taken suitably small
and~$a_1>a_2>0$ (to be consistent with~\eqref{GROW:2} one can take~$a_1:=
(\overline{a}+\underline{a})/2$ and~$a_2:=(\overline{a}-\underline{a})/2$).

Notice that when~$\epsilon=0$, the perturbation function~$a$ reduces to
a constant and thus it has no effect on the structure of
the solutions of~\eqref{EQUAZ}. On the other hand, we will show 
that for small~$\epsilon$ the perturbation~$a$ produces
a variety of geometrically very different solutions. Namely,
under the conditions above, we construct solutions of~\eqref{EQUAZ}
which connect chains of integers, thus proving a sort of ``chaotic''
behavior for this type of solutions (roughly speaking,
the sequences of integers can be arbitrarily prescribed in a given class,
thus providing a ``symbolic dynamics'').
The behavior of this chaotic trajectories is depicted in Figure~\ref{F:CH}.

\begin{figure}
    \centering
    \includegraphics[width=16.8cm]{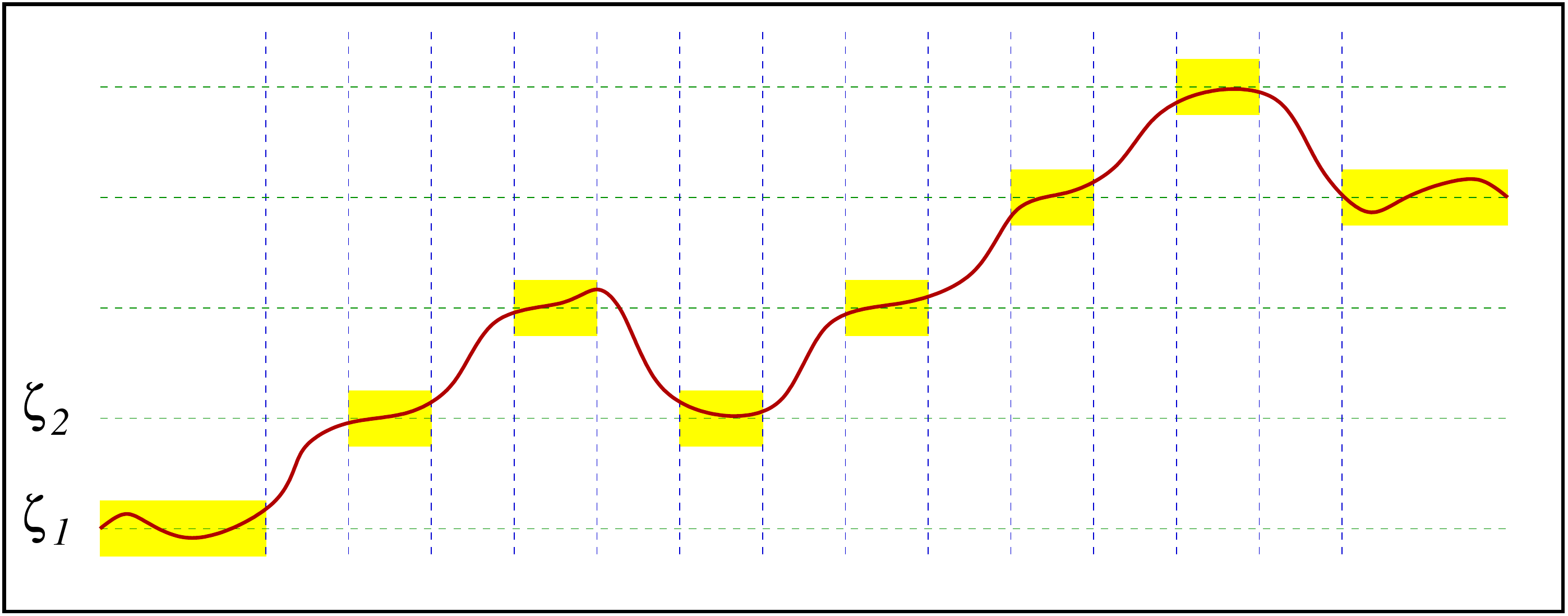}
    \caption{A chaotic trajectory.}
    \label{F:CH}
\end{figure}

More precisely, the main result that we prove in this paper
is the following:

\begin{theorem}\label{TH:MAIN}
Let~$\zeta_1\in\Z^n$ and~$N\in\N$.
There exist~$\zeta_2,\dots,\zeta_N\in\Z^n$
and~$b_1,\dots,b_{2N-2}\in\R$, with~$b_{i+1} \ge b_i + 3$
for all~$i=1,\dots,2N-3$, and a solution~$Q_*$ of~\eqref{EQUAZ} such that
\begin{eqnarray*}
&& \zeta_{i+1}\ne\zeta_i {\mbox{ for any }} i\in\{1,\dots,N-1\},\\
&& \lim_{x\to-\infty} Q_*(x)=\zeta_1,\\
&& \sup_{x\in(-\infty,b_1]} |Q_*(x)-\zeta_1|\le\frac14,\\
&& \sup_{x\in[b_{2i},b_{2i+1}]} |Q_*(x)-\zeta_{i+1}|\le\frac14\qquad{\mbox{
for all }}i=1,\dots, N-2,\\
&& \sup_{x\in[b_{2N-2},+\infty)} |Q_*(x)-\zeta_N|\le\frac14\\
{\mbox{and }}&& \lim_{x\to+\infty} Q_*(x)=\zeta_N.
\end{eqnarray*}
\end{theorem}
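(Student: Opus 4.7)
The plan is to follow a Rabinowitz--S\'er\'e variational scheme for multibump orbits, adapted to the nonlocal energy framework of the paper. I work with the action
$$\mathcal{F}(Q):=\tfrac12 E(Q)+\int_\R a(x)\,W(Q(x))\,dx$$
and first verify that $\mathcal{F}$ is finite on every measurable $Q$ that is asymptotic to integer points at $\pm\infty$ with algebraic rate. The only delicate contribution is the cross tail $\iint_{x<-R,\,y>R}K(x-y)|Q(x)-Q(y)|^2\,dx\,dy$, which, by the upper bound in \eqref{KERNEL}, is of order $R^{1-2s}$ and is therefore summable precisely because $s>\tfrac12$; this is where the standing hypothesis on $s$ enters in an essential way. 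By \eqref{U756GA}, solutions of \eqref{EQUAZ} correspond to critical points of $\mathcal{F}$ in this class.

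As a preliminary step I would construct, for $a$ replaced by a constant, a minimizing heteroclinic $q_{\zeta,\zeta'}$ between each pair of nearest-neighbor integers $\zeta'=\zeta+e_j$. Existence follows by direct methods together with a Lions-type concentration-compactness argument to handle the translation invariance, and \eqref{GROW} combined with suitable barriers yields algebraic tail decay $|q_{\zeta,\zeta'}(x)-\zeta'|\lesssim |x|^{-2s}$ at $+\infty$ and a positive action threshold $c_*>0$.

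For the main construction I would set $\zeta_{i+1}:=\zeta_i+e_1$ (any nearest-neighbor choice works), fix $\epsilon$ small in \eqref{ASS:A:NONDEG}, and choose target points $b_i$ with mutual separations large in terms of $\epsilon^{-1}$, placed so that each transition window contains a maximum of the perturbation $a$. I would then minimize $\mathcal{F}$ over the constraint class
$$\mathcal{X}:=\Bigl\{Q\,:\,Q(-\infty)=\zeta_1,\;Q(+\infty)=\zeta_N,\;\sup_{[b_{2i},b_{2i+1}]}|Q-\zeta_{i+1}|\le r\Bigr\}$$
(with analogous bounds on $(-\infty,b_1]$ and $[b_{2N-2},+\infty)$). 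Existence of a minimizer $Q_*\in\mathcal{X}$ is a routine application of the direct method: lower semicontinuity of the Gagliardo seminorm on weakly converging sequences, together with uniform tail bounds from \eqref{KERNEL}.

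The genuinely hard part, and the main obstacle specific to the nonlocal setting, is the \emph{interior principle}: one must show that $Q_*$ does not saturate the pointwise constraint $|Q_*-\zeta_i|=r$ anywhere inside any of the prescribed intervals, so that $Q_*$ is a free critical point of $\mathcal{F}$ and hence a solution of \eqref{EQUAZ} on all of $\R$. If saturation occurred, one builds a strict competitor by splicing into $Q_*$ a translate of the basic heteroclinic $q_{\zeta_i,\zeta_{i+1}}$ centered at a favorable site for $a$; the nondegeneracy \eqref{ASS:A:NONDEG} provides a strict action gain of size $\asymp\epsilon^\alpha$ per window, since a bump placed near a maximum of $a$ is strictly cheaper than one at a suboptimal location. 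The purely nonlocal obstruction is that the splicing creates spurious long-range interactions between far apart bumps, which are absent in the local case; these are controlled using the algebraic decay of $q_{\zeta,\zeta'}$ together with the kernel bound \eqref{KERNEL:2}, yielding cross-interaction costs of order $(b_{i+1}-b_i)^{1-2s}$, to be absorbed into the gain by taking the separations sufficiently large (in particular much more than the $3$ needed in the statement). Once this energy balance is secured, $Q_*$ is interior to the constraint, and therefore a genuine solution of \eqref{EQUAZ} with all the claimed asymptotic and localization properties.
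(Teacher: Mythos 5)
Your high-level plan --- constrained minimization over a pointwise-constraint class, then showing the minimizer is a free critical point by exploiting the oscillation of $a$ to build a strictly cheaper competitor --- is indeed the paper's strategy, and you have rightly put your finger on the nonlocal cross-interactions as the central obstacle. But two of your concrete choices would not go through, and they are precisely where the paper's new ideas live. You fix $\zeta_{i+1}:=\zeta_i+e_1$, while the theorem only claims the existence of \emph{some} $\zeta_2,\dots,\zeta_N$ and the proof actually requires $\zeta_{i+1}\in\mathcal{A}(\zeta_i)$, the set of integers whose heteroclinic from $\zeta_i$ has \emph{minimal} action (see~\eqref{89JKA:99:MI}). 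That minimality is what makes Lemma~\ref{UELO}, Corollary~\ref{UELO:2} and Proposition~\ref{UELO:j} work: to exclude a detour near a third integer inside a window, one compares $Q_*$ against a spliced copy of the optimal heteroclinic, and the comparison only wins because no heteroclinic emanating from $\zeta_{j+1}$ is cheaper. There is no reason a generic nearest neighbor should lie in $\mathcal{A}(\zeta_i)$, and with an arbitrary choice the argument collapses.

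The more fundamental gap is in how you treat the gluing. You budget only for the far-field cost of order $(b_{i+1}-b_i)^{1-2s}$, but Proposition~\ref{GLUE} shows that splicing two trajectories at a point $x_0$ also produces \emph{local} contributions of the type $\int_{x_0}^{x_0+\beta}\int_{x_0}^{x}|x-y|^{-1-2s}|R(x)-R(y)|^2\,dy\,dx$, which are not small for a generic meeting point: they require $Q_*$ to be Lipschitz with a \emph{small} Lipschitz constant on a sizable interval around $x_0$. The paper manufactures exactly this via the clean interval/clean point machinery of Section~\ref{CLEAN:SECT} --- Lemma~\ref{CLEAN:LEMMA} extracts a subinterval of length $|\log\rho|$ on which $Q_*$ stays within $\rho$ of an integer, and the regularity estimate Lemma~\ref{9jkdJJKA} (applicable only on such an interval) then gives the needed Lipschitz bound, which is what makes the gluing error $\diamondsuit$ in Remark~\ref{DIAMOND} negligible. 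Your sketch never establishes this local flatness, so the competitor you build could be arbitrarily more expensive than $Q_*$ regardless of how far apart the windows are placed. Relatedly, the energy gain from relocating the bump is not ``$\asymp\epsilon^\alpha$'' but a \emph{fixed} constant $\gamma>0$ depending only on $a_2$ (see~\eqref{A:EX1}); smallness of $\epsilon$ only serves, via $L=\pi/(12\epsilon)$, to make $\rho$ small enough that $\diamondsuit<\hat c\gamma$, so your bookkeeping would leave the decisive inequality unresolved.
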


More quantitative versions of Theorem~\ref{TH:MAIN}
will be given in the forthcoming
Theorems~\ref{HET}
and~\ref{CHS}.\medskip

The result contained in Theorem~\ref{TH:MAIN} may be seen
as the first attempt in the literature to deal with
heteroclinic, homoclinic and chaotic orbits for systems of
equations driven by fractional operators (as a matter of fact,
to the best of our knowledge, Theorem~\ref{TH:MAIN} is new
even in the case of a single equation with the fractional 
Laplacian).

For local equations, the study of these types of orbits
has a long and celebrated tradition and the local counterpart of
Theorem~\ref{TH:MAIN} is a celebrated result in~\cite{rabi}
(see also~\cite{JM, MR1143210, MR1304144, MR1304144, bessi, MR1422191, MR1433175, MR1473371, berti, alessio, rabi2, MR1804957} and the references
therein for important related results).

We point out that the nonlocal character of the equation
generates several difficulties in the construction 
of the connecting orbits, since all the variational methods
available in the literature are deeply based on the possibility
of ``glueing'' trajectories to provide admissible competitors.
Of course, in the nonlocal case this glueing procedure
is more problematic, since the energy is affected by the
nonlocal interactions.\medskip

In the nonlocal case, as far as we know, multibump solutions
have not been studied in the existing literature. In the homogeneous
case (i.e. when~$a$ is constant), heteroclinic solutions have been constructed in~\cite{MR3081641,
MR3280032, 2015arXiv151002812C}, but the methods used there do not easily extend
to inhomogeneous cases (since sliding methods and extension
techniques are taken into account) and cannot lead to
the construction of chaotic trajectories.
In particular, the reader can compare Theorem~\ref{TH:MAIN} here with
Theorem~1 in~\cite{MR3081641},
Theorem~2.4 in~\cite{MR3280032}
or Theorem~1 in~\cite{2015arXiv151002812C}: all these results
provide the existence of transition layers in one dimension for
spatially homogeneous doublewell potentials (and in this sense are
related to Theorem~\ref{TH:MAIN} here when~$N=2$), but the methods heavily use
maximum principle or extension techniques, so they cannot be easily adapted to
consider higher dimensional cases and inhomogeneous cases 
(also, extensions methods cannot be applied for general interaction kernels).
\medskip

Also, in the framework of the existing literature,
this paper is the first attempt to
combine the very prolific variational techniques
used in dynamical systems to construct special types of orbits
with the abundant new tools arising in the study of
nonlocal integrodifferential equations.\medskip

In this sense, we are also confident that the results of this
paper can be stimulating for both the scientific communities in
dynamical systems and in partial differential equations
and they can trigger new research in this field in the near future.\medskip

{F}rom the point of view of the applications,
for us, one of the main motivations for studying nonlocal
variational problems as in~\eqref{EQUAZ} came from similar
equations arising in the study of atom dislocations
in crystals and in nonlocal phase transition models,
see e.g.~\cite{MR2231783, MR2946964, MR2851899, MR3259559, MR3296170, MR3338445, MR3334171}
and~\cite{MR2948285, MR3081641, MR3280032, 2015arXiv151002812C}.\medskip

Important connections between nonlocal diffusion and dynamical
systems occur also in several other areas of contemporary research,
such as in plasma physics, see e.g.~\cite{castillo}.
\medskip

The rest of the paper is organized as follows. First, in Section~\ref{JAK:HE} -- which
can of course be easily skipped by the expert reader --
we give some heuristic comments on the proof
of Theorem~\ref{TH:MAIN}, trying to elucidate the role played
by the modulation function~$a$ introduced in~\eqref{ASS:A:NONDEG}.
 
In Section~\ref{SE1}
we collect some simple technical lemmata
and in Section~\ref{SeR} we introduce the basic regularity estimates
needed for our purposes. Then,
in Section~\ref{Se2}, we develop the theory
of the nonlocal glueing arguments. In a sense, this part
contains the many novelties
with respect to the classical case, since the classical variational
methods fully exploit
several glueing arguments that are very sensitive to the local
behavior of the energy functional.

The use of the glueing results is effectively implemented
in Section~\ref{CLEAN:SECT}, which contains the new notion
of clean intervals and clean points in this framework.
Roughly speaking, in the classical case,
having two trajectories that meet allows simple glueing
methods to work in order to construct competitors. In our
case, to perform the glueing methods,
we need to attach the trajectories
in an ``almost tangent'' way, and keeping the trajectories
close in Lipschitz norm for a sufficiently large interval.
This phenomenon clearly reflects the nonlocal character of
the problem and requires the definitions and methods introduced in this section.

In Section~\ref{MI99} we develop the minimization theory
for the nonlocal energy under consideration. Differently from the
classical case, this part has to join a suitable regularity theory,
in order to obtain uniform estimates on the nonlocal terms of the energy.

The stickiness properties of the energy minimizers
(i.e., the fact that minimizing orbits stay close to the integer points
once they get sufficiently close to them)
is then discussed in Section~\ref{MI100}. This property is
based on the comparison of the energy with suitable competitors
and thus it requires the nonlocal glueing arguments introduced in
Section~\ref{Se2} and the notion of clean intervals given in Section~\ref{CLEAN:SECT}.

Section~\ref{JAH:SS1} deals with the construction of
heteroclinic orbits: namely, for any integer point,
we define the set of admissible integers that can be connected
with the first one by a heteroclinic orbit (indeed, we will show
that this admissible family contains at least two elements).

In Section~\ref{JAH:SS2}, we complete the proof of Theorem~\ref{TH:MAIN}
by constructing the desired
chaotic orbits.

\section{A few comments on the proof of Theorem~\ref{TH:MAIN} and on the role
of the modulation function~$a$}\label{JAK:HE}

The proof of Theorem~\ref{TH:MAIN} is variational and it can be better
understood by thinking first to the case~$N=2$, i.e. when only one transition
from one integer to another takes place.
In this case, one first considers
a constrained minimization problem, namely one minimizes the action
functional among all the trajectories which are forced to stay sufficiently close to
the first integer in~$(-\infty,b_1]$
and sufficiently close to
the second integer in~$[b_2,+\infty)$
(the formal details
of this constrained minimization argument will be given in Section~\ref{MI99}).
The goal is, in the end, to choose~$b_2>b_1$ in a suitable way
for which the constrained minimal trajectory does not touch the barrier,
hence it is a ``free'' minimizer and so a solution of the desired equation.

To this aim, the appropriate choice of~$b_1$ and~$b_2$ has to take
advantage of the small, but not negligible, oscillations 
of the potential induced by the modulating function~$a$ in~\eqref{ASS:A:NONDEG}.
Roughly speaking, the points~$b_1$ and~$b_2$ will be chosen sufficiently close
to the points in which~$a$ takes its maximal value, say at distance close
to (a multiple of) the period of~$a$, or more generally to the distance between
two wells of~$a$. 

In this way,
for a minimal trajectory it is not convenient to put its ``transition
from one integer to the other'' too close to the constraints. Indeed,
such transition pays energy in virtue of the potential. So, if the transition
occurs too close to~$b_1$, one can consider the translation of the orbit to the right.
Such translated orbit will place the transition in the ``lowest well'' of the 
modulating function~$a$ and so it will pay less potential energy
(the energy coming from particle interaction is on the other hand
invariant under translation). In this way, we see that
the translated orbit would have less energy than the original one,
thus providing a contradiction with the minimality assumption (to facilitate the intuition,
one can look at Figure~\ref{TODO}).

\begin{figure}
    \centering
    \includegraphics[width=16.8cm]{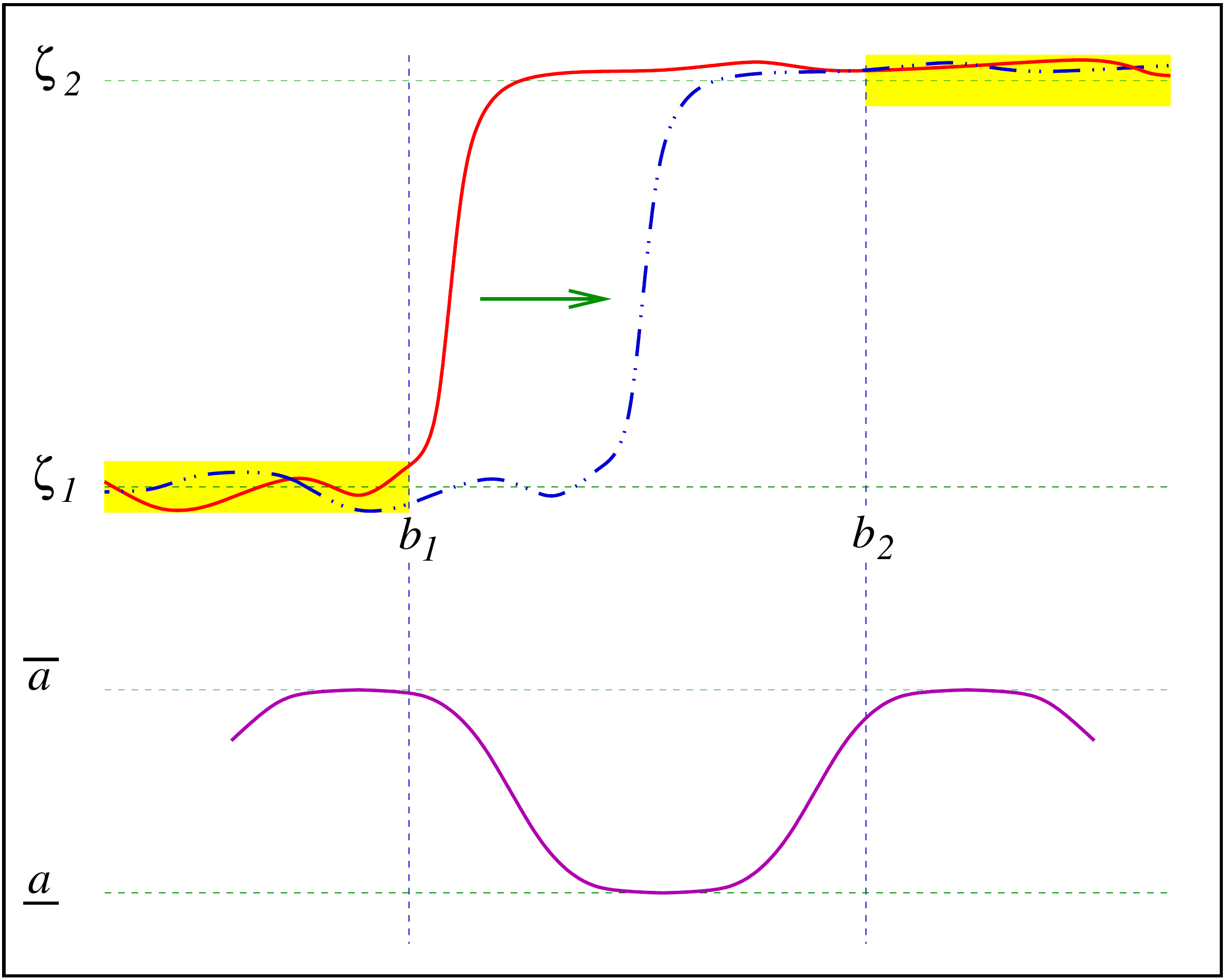}
    \caption{The role of the modulating function~$a$ (below),
compared with a trajectory with transition too close to~$b_1$ and its
translation (above).}
    \label{TODO}
\end{figure}

We stress that
the nondegeneracy of the function~$a$ (that is the fact that~$a$
possesses suitable ``hills and valleys'' in its graph)
is indeed crucial in order to
perform this variational construction, since it is exactly the ingredient
used to allow this energy decreasing under appropriate translations.\medskip

Once the transition is set sufficiently far from the constraint, one
has to perform suitable cut-and-paste arguments to check that
the remaining parts of the trajectory approach the equilibria sufficiently fast,
namely the distance from the two limit integers becomes very fast much smaller than
the prescription given by the initial constraint and so the trajectory is
a true, unconstrained, minimizer.

The choice of~$b_1$ and~$b_2$ in terms of the function~$a$ will
be analytically described in~\eqref{89uIJBAAJKAlAK} and the free minimization
procedure is discussed in details in Section~\ref{JAH:SS1}.\medskip

The case in which~$N\ge3$, i.e. when multibumps arise, is technically
more delicate, since different situations must be taken into account
(according to where the touching with the constraint may occur). Also, when~$N\ge3$,
global translations are not allowed, since they are not compatible with
oscillating constraints, and therefore cut-and-paste arguments must be
performed together with a local translation procedure. Nevertheless,
in spite of these additional difficulties, one may still think that
the role of the modulation given by~$a$ is to make the transitions
near the multiple constraints ``too expensive''. For this, once again,
one has to place the constraint points~$b_1,\dots,b_{2N-2}$ sufficiently
close to the maxima of~$a$, so that the transition will have the tendency
to occur away from them. The analytic choice of these points will be made in~\eqref{DEF:b:sp}.\medskip

We remark that both the local and the nonlocal case share the variational
idea of looking for constrained minimal orbits and then proving that they are
in fact unconstrained minimizers -- of course, in the nonlocal case
the action functional is different than in the local case and it takes
into account an interaction energy which is reminiscent of fractional Sobolev spaces.
In the nonlocal case, however, the cut-and-paste arguments are more delicate,
exactly in view of these interactions coming
``from far away'', so they require
the ``clean point'' procedure introduced in Section~\ref{CLEAN:SECT}.
This procedure is designed
exactly in order to make the remote interactions
sufficiently small in the glueing methods: roughly speaking,
when a glueing procedure makes a sharp angle, the nonlocal
energy increases considerably (that is, it is much more than just the sum
of the contributions to the left and
to the right of the angle). On the other hand, when the function is very flat
in a very large neighborhood of the glueing point,
this additional energy is rather small, because the values 
of the function near this point are basically constant and so they give almost
no contribution to the interaction energy. An additional energy contribution
comes from outside this flatness interval, but, thanks to the decay of the kernel
at infinity, it becomes very small as the flatness interval becomes very large.

In this sense, the notation introduced
in Section~\ref{CLEAN:SECT} aims to give a precise quantification of the procedure
discussed above, also with respect to the energy functional
related to Theorem~\ref{TH:MAIN}.

\section{Toolbox}\label{SE1}

This section collects some auxiliary lemmata needed for
the proofs of the main theorem. 
An ancillary tool for these results is the basic theory of the
fractional Sobolev spaces. In our setting, given
an interval~$J\subseteq \R$, we will consider the so-called
Gagliardo seminorm of a measurable function~$Q:\R\to\R^n$, given by
$$ [Q]_{H^s(J)}:=\left(
(1-s)\,
\iint_{J\times J} \frac{|Q(x)-Q(y)|^2}{|x-y|^{1+2s}}\,dx\,dy \right)^{\frac12}$$
and the complete fractional norm, given by
$$ \|Q\|_{H^s(J)}:=[Q]_{H^s(J)} +\|Q\|_{L^2(J)}.$$
We also denote by~$|J|$ the length of the interval~$J$.
It is useful to observe that~$E(Q)$ controls the Gagliardo seminorm, namely,
by~\eqref{KERNEL},
\begin{equation}\label{CONTROL}
\begin{split}
{\mbox{ if $|J|\le\rho_0$ then }}
\qquad E(Q)\;&\ge 
\iint_{J\times J} K(x-y)\,\big|Q(x)-Q(y) \big|^2\,dx\,dy
\\&\ge \iint_{J\times J}
\frac{\theta_0 \;(1-s)\;\big|Q(x)-Q(y) \big|^2}{|x-y|^{1+2s}}
\,dx\,dy= \theta_0\,[Q]_{H^s(J)}^2\\
{\mbox{ and so }}
\qquad \|Q\|_{H^s(J)}\;&\le \big(\theta_0^{-1} E(Q)\big)^{\frac12}+\|Q\|_{L^\infty(J)}.
\end{split}\end{equation}
In this framework, we recall
a H\"older embedding result 
that is
uniform as~$s\to1$:

\begin{lemma}\label{SOB}
Let~$s_0\in\left(\frac12, 1\right)$ and~$s\in[s_0,1)$.
Let~$J\subset\R$ be an interval of length~$1$. Then,
there exists~$ S_0>0 $, possibly depending on~$n$ and~$s_0$,
such that for any~$Q:J\to\R^n$ we have that
\begin{equation}\label{LA:99:001}
[Q]_{C^{0, s-\frac12 }(J)}\le S_0\,[Q]_{H^s(J)}.\end{equation}
\end{lemma}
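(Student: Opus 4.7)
The statement is a uniform-in-$s$ Sobolev--Morrey embedding of $H^s$ into $C^{0, s-1/2}$ in one dimension, which is the natural ``borderline'' embedding for $sp > N$ (here $p=2$, $N=1$) available precisely because $s$ is bounded away from $1/2$ by $s_0$. The plan is to prove it via a Campanato-type dyadic averaging argument on $J$, keeping track of how the constants depend on $s$.

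First, I would fix $x\ne y$ in $J$ and set $r:=|x-y|\in(0,1]$, reducing the claim to the pointwise estimate
$$ |Q(x)-Q(y)| \le S_0\, r^{s-1/2}\, [Q]_{H^s(J)}. $$
I would then choose an interval $B\subset J$ of length comparable to $r$ containing both $x$ and $y$ (shifting appropriately if $x$ or $y$ lies close to an endpoint of $J$) and split via the triangle inequality through the mean
$$ |Q(x)-Q(y)| \le |Q(x)-\bar Q_B|+|\bar Q_B-Q(y)|, \qquad \bar Q_B := \frac{1}{|B|}\int_B Q. $$

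Next, to control $|Q(x)-\bar Q_B|$ (and symmetrically the $y$-term), I would telescope through dyadic averages. Setting $r_k:=2^{-k}r$ and $B_k:=B_{r_k}(x)\cap J$, Lebesgue differentiation yields $Q(x)=\lim_k \bar Q_{B_k}$ a.e., so
$$ |Q(x)-\bar Q_B| \le \sum_{k\ge0}|\bar Q_{B_{k+1}}-\bar Q_{B_k}|. $$
Each telescope term is a difference of averages, writable as $\frac{1}{|B_{k+1}||B_k|}\iint_{B_{k+1}\times B_k}(Q(u)-Q(v))\,du\,dv$; by Cauchy--Schwarz, the pointwise bound $|u-v|\le 2r_k$ on $B_{k+1}\times B_k$, and the Gagliardo kernel factor,
$$ |\bar Q_{B_{k+1}}-\bar Q_{B_k}|^2 \le C\,r_k^{2s-1}\iint_{B_k\times B_k}\frac{|Q(u)-Q(v)|^2}{|u-v|^{1+2s}}\,du\,dv. $$
Taking square roots and summing produces a geometric series $\sum_{k\ge0}2^{-k(s-1/2)}$, which converges uniformly for $s\in[s_0,1)$ since the exponent is bounded below by $s_0-1/2>0$ (the sum is at most $(1-2^{-(s_0-1/2)})^{-1}$).

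The main obstacle is the uniform control of $S_0$ as $s\to 1$: a naive replacement of the local Gagliardo integral $\iint_{B_k\times B_k}$ by the full one $\iint_{J\times J}=[Q]_{H^s(J)}^2/(1-s)$ introduces a spurious factor $(1-s)^{-1/2}$ that blows up in that limit. The resolution is to keep the Gagliardo integrals strictly localized on the shrinking balls $B_k$ throughout the telescoping, so that the natural local scaling of the energy on a ball of radius $r_k$ (which already carries the correct $(1-s)$ weight in the definition of $[Q]_{H^s(B_k)}$) cancels the unwanted prefactor; this yields a constant $S_0$ depending only on $n$ and on $s_0$, and completes the proof once the analogous bound at $y$ is combined with the one at $x$.
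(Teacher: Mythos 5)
Your overall strategy is the same as the paper's: a Campanato-type dyadic telescoping of averages $\bar Q_{B_k}$ over shrinking balls, controlling each telescope term $|\bar Q_{B_{k+1}}-\bar Q_{B_k}|$ by the localized Gagliardo double integral and summing a geometric series that converges uniformly because $s-\tfrac12\ge s_0-\tfrac12>0$. Up to superficial differences (the paper formalizes this through a Campanato seminorm $[Q]_s$ and then bounds $[Q]_s$ by $[Q]_{H^s(J)}$, while you telescope directly; the paper also spends several lines verifying that $Q$ has a continuous representative so the telescoping can be initialized at a point), the architecture is identical. You also correctly identify the one genuine technical issue: the potential loss of a factor $(1-s)^{-1/2}$ when the localized double integrals are compared against the $(1-s)$-normalized seminorm $[Q]_{H^s(J)}$.

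However, the sentence that is supposed to dispose of that issue is a hand-wave and does not constitute an argument. You say that keeping the integrals ``strictly localized on $B_k$'' produces a cancellation because the local seminorm $[Q]_{H^s(B_k)}$ ``already carries the correct $(1-s)$ weight.'' It does not: by the very definition used in the paper, $\iint_{B_k\times B_k}\frac{|Q(u)-Q(v)|^2}{|u-v|^{1+2s}}\,du\,dv=\frac{[Q]_{H^s(B_k)}^2}{1-s}\le\frac{[Q]_{H^s(J)}^2}{1-s}$, so localizing changes nothing about the unwanted prefactor. Your telescope bound then reads $\sum_k r_k^{s-\frac12}\bigl(\iint_{B_k^2}\cdots\bigr)^{1/2}\le\frac{C_{s_0}\,r^{s-\frac12}}{\sqrt{1-s}}\,[Q]_{H^s(J)}$, and the geometric series only contributes the $s_0$-dependent constant, not any power of $(1-s)$. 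A concrete sanity check: take $Q$ Lipschitz with slope $\lambda$ near $x$. Then $|\bar Q_{B_{k+1}}-\bar Q_{B_k}|\approx\lambda r_k$, while the right-hand side of your per-term bound is $\approx\lambda r_k/\sqrt{1-s}$; each term is lossy by $(1-s)^{-1/2}$, and so is the sum. So as written, your argument proves the estimate with $S_0\sim(1-s)^{-1/2}$, i.e.\ a constant that blows up as $s\to1$, which is precisely what the lemma is designed to exclude. To actually get a constant depending only on $s_0$, one needs a genuinely different ingredient at this step -- for instance, a Fourier-side argument (where $\int|\xi|^{2s}|\hat Q(\xi)|^2\,d\xi$ is, up to $s_0$-dependent constants, comparable to $[Q]_{H^s}^2$ with \emph{no} $(1-s)$ loss), or a more careful exploitation of how the Gagliardo integral accumulates across scales; the bare telescoping-with-Cauchy--Schwarz you describe does not yield the cancellation by itself. (For what it is worth, the step in the paper's Appendix that passes from the localized double integral to $8\rho^{2s}[Q]_{H^s(J)}^2$ appears to drop the same $(1-s)^{-1}$ factor without comment, so the point you flagged is indeed the delicate one -- but flagging an obstacle is not the same as overcoming it.)
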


The proof of Lemma~\ref{SOB} follows the classical ideas of~\cite{MR0156188}
and can be found essentially in many textbooks.
In any case, since we need here to check that the constants
are uniform in~$s\in[ s_0, 1)$
(recall the footnote on page~\ref{FOO:678uhauu})
and this detail
is often omitted in the existing literature,
for completeness we give a selfcontained proof of Lemma~\ref{SOB}
in Appendix~\ref{CAMPA}.

Now we define the energy functional
\begin{equation}\label{ENE}
I(Q) := E(Q)
+\int_\R a(x)\,W(Q(x))\,dx,\end{equation}
where~$E(Q)$ is the ``free energy'' introduced in~\eqref{def E:Q}.

In the next result
we compute how much the energy charges ``long'' trajectories:

\begin{lemma}\label{FUrbJAK}
Let~$\zeta=(\zeta_1,\dots,\zeta_n)\in\Z^n$, $x_0\in\R$ and~$Q=(Q_1,\dots,Q_n):
\R\to\R^n$ be a measurable function
such that~$Q(x)\in \overline{B_r(\zeta)}$ for any~$x\le x_0$.
Assume that~$I(Q)<+\infty$ and
\begin{equation}\label{895A}
\sup_{x\in\R} |Q_i(x)-\zeta_i|\ge \nu,\end{equation}
for some~$\nu\in\N$, $\nu\ge1$ and~$i\in\{1,\dots,n\}$. Then
$$ I(Q)\ge E(Q)+2\ell_{Q}\,\underline{a}\,\nu
\inf_{ {\rm dist}\, (\tau,\; \Z^n) \ge 1/4 } W(\tau),$$
where~$r$ and~$\underline{a}$ are as in~\eqref{GROW} and~\eqref{GROW:2},
and\begin{equation} \label{ELL} \ell_{Q}:=
\min \left\{ \frac{\rho_0}{2}, \;
\left( \frac{1}{ 4S_0\,\big(\theta_0^{-1} E(Q)\big)^{\frac12}
}\right)^{\frac{2}{2s-1}}\right\}.
\end{equation}
\end{lemma}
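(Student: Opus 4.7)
The plan is to show that the potential term in $I(Q)$ contributes at least $2\nu \ell_Q \underline{a} \inf_{\mathrm{dist}(\tau, \Z^n) \ge 1/4} W(\tau)$, by isolating $\nu$ pairwise disjoint intervals of length $2\ell_Q$ on each of which $\mathrm{dist}(Q(x), \Z^n) \ge 1/4$. The engine of the argument will be to upgrade the finiteness of $E(Q)$ to a quantitative H\"older modulus of continuity for $Q$.

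The first step is to combine Lemma~\ref{SOB} with the bound $[Q]_{H^s(J)}^2 \le \theta_0^{-1} E(Q)$ from \eqref{CONTROL}, which is valid for $|J| \le \rho_0$. Since both the H\"older and $H^s$ seminorms scale by the same power under rescaling, Lemma~\ref{SOB} (stated on an interval of length $1$) applies with the same constant $S_0$ on any bounded interval. Applying it on an interval of length $\rho_0$ centered at $y$ yields, for every $x$ with $|x - y| \le \rho_0/2$,
\begin{equation*}
|Q(x) - Q(y)| \le S_0 \bigl(\theta_0^{-1} E(Q)\bigr)^{1/2} |x - y|^{s - 1/2}.
\end{equation*}
Requiring this quantity to be at most $1/4$, together with the constraint $|x-y|\le \rho_0/2$, is exactly how the definition of $\ell_Q$ in \eqref{ELL} arises.

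The second step is geometric. Without loss of generality, assume $\sup_{x \in \R} (Q_i(x) - \zeta_i) \ge \nu$ (the opposite sign is symmetric). Since $Q_i(x_0) \le \zeta_i + r \le \zeta_i + 1/4$ and $Q$ is continuous by the previous step, the Intermediate Value Theorem yields, for each $k \in \{0, \ldots, \nu - 1\}$, a point $y_k > x_0$ with $Q_i(y_k) = \zeta_i + k + 1/2$. On $I_k := [y_k - \ell_Q, y_k + \ell_Q]$ the H\"older bound then forces $Q_i(x) \in [\zeta_i + k + 1/4, \zeta_i + k + 3/4]$, so every integer sits at distance at least $1/4$ from $Q_i(x)$, and hence $\mathrm{dist}(Q(x), \Z^n) \ge 1/4$ throughout $I_k$. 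Because the value-intervals $[\zeta_i + k + 1/4, \zeta_i + k + 3/4]$ are pairwise disjoint across different $k$, the time-intervals $I_k$ must be pairwise disjoint as well. Using $a \ge \underline{a}$ and summing the integrals of $W(Q(x))$ over the $I_k$ then delivers the claim.

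I expect the main subtle point to be the clean bookkeeping of constants that produces exactly the $\ell_Q$ of \eqref{ELL}: because Lemma~\ref{SOB} is stated for $|J|=1$ whereas \eqref{CONTROL} requires $|J| \le \rho_0$, one must invoke the scale-invariance of the embedding to marry the two. The degenerate case $E(Q) = 0$ is ruled out immediately, as it would force $Q \equiv \zeta$ and contradict $\sup |Q_i - \zeta_i| \ge \nu \ge 1$; hence $\ell_Q$ is a well-defined positive number and the estimate above makes sense.
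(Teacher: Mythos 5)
Your proof is correct and follows essentially the same route as the paper's: Hölder continuity deduced from $E(Q)$ via \eqref{CONTROL} and a rescaled Lemma~\ref{SOB}, then the intermediate value theorem to locate $\nu$ crossings of half-integer levels, and the Hölder modulus to surround each crossing with an interval of length $2\ell_Q$ on which $\mathrm{dist}(Q,\Z^n)\ge 1/4$. The only cosmetic difference is that the paper tracks $|Q_1|$ (after translating $\zeta$ to the origin) so that the crossing values lie in $\tfrac12+\Z$, whereas you impose a one-sided normalization by symmetry; you also make explicit the pairwise disjointness of the intervals $I_k$ and the non-degeneracy $E(Q)>0$, which the paper leaves implicit but which are needed exactly as you argue.
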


\begin{proof} Up to reordering the components of~$Q$, 
we may suppose that~$i=1$. Also, by a translation, we may assume that~$\zeta=0$.

By~\eqref{CONTROL}, we find that~$[Q]_{H^s(J)}\le 
\big(\theta_0^{-1} E(Q)\big)^{\frac12}$, for any interval~$J$ with~$|J|\le\rho_0$.
Consequently,
by scaling Lemma~\ref{SOB}, we obtain that~$[Q]_{C^{0,s-\frac{1}{2}}(J)}$
is bounded by~$S_0\,\big(\theta_0^{-1} E(Q)\big)^{\frac12}$
for any interval~$J$ with~$|J|\le\rho_0$.

In particular, $|Q_1|$ is a continuous curve, which, by~\eqref{895A},
connects~$0$ with~$\nu$ and so it passes through all the points
of the form~$\frac12+m$, for any~$m\in \{0,\dots,\nu-1\}$.
More explicitly, we can say that there exists~$X_m$ such that~$|Q_1(X_m)|=
\frac12+m$, for all~$m\in \{0,\dots,\nu-1\}$.
This says that
\begin{equation}\label{YUAJ}
Q_1(X_m)\in \frac12+\Z.\end{equation}
Let now~$\ell_{Q}$
be as in~\eqref{ELL}.
Then, for any~$x\in [X_m-\ell_{Q},X_m+\ell_{Q}]$,
$$ |Q_1(x)-Q_1(X_m)| \le S_0\,\big(\theta_0^{-1} E(Q)\big)^{\frac12}\,
\ell_{Q}^{s-\frac12}\le \frac14,$$
and so, by~\eqref{YUAJ},
$$ {\rm dist}\, \big(Q_1(x),\; \frac12+\Z\big) \le \frac14,$$
which gives that
$$ {\rm dist}\, \big(Q_1(x),\; \Z\big) \ge \frac14\ge r,$$
for any~$x\in [X_m-\ell_{Q},X_m+\ell_{Q}]$.
Thus, writing~$\tau=(\tau_1,\dots,\tau_n)$ and recalling~\eqref{ZERI di W},
$$ W(Q(x))\ge \inf_{ {\rm dist}\, (\tau_1,\; \Z) \ge 1/4 } W(\tau),$$
for any~$x\in [X_m-\ell_{Q},X_m+\ell_{Q}]$. As a consequence,
\begin{equation*}\begin{split} I(Q)&\ge E(Q)+\sum_{m=0}^{\nu-1} \int_{X_m-\ell_{Q}}^{X_m+\ell_{Q}} a(x)\,W(Q(x))\,dx\\&
\ge E(Q)+ 2\ell_{Q}\,\underline{a}\,\nu 
\inf_{ {\rm dist}\, (\tau_1,\; \Z) \ge 1/4 } W(\tau)\\&\ge
E(Q)+2\ell_{Q}\,\underline{a}\,\nu
\inf_{ {\rm dist}\, (\tau,\; \Z^n) \ge 1/4 } W(\tau)
,\end{split}\end{equation*}
as desired.
\end{proof}

\section{A bit of regularity theory}\label{SeR}

Goal of this section is to establish the following
regularity result for solutions of~\eqref{EQUAZ} that are close to an integer
in large intervals,
with uniform estimates as~$s\to1$:

\begin{lemma}\label{9jkdJJKA}
Let~$s_0\in\left(\frac12,1\right)$
and~$s\in[s_0,1)$.

Let~$T>32$, $\rho>0$, $M_o>0$, $\zeta\in\Z^n$.
Let~$Q\in L^\infty(\R,\R^n)$ be a solution 
of
$${\mathcal{L}}(Q)(x)+a(x) \nabla W(Q(x))=0$$
in~$[-2T,2T]$, with~$E(Q)+\|Q\|_{L^\infty(\R,\R^n)}\le M_o$.

Suppose that
\begin{equation}\label{097ihkGHJHG:00}
{\mbox{$Q(x)\in\overline{B_\rho(\zeta)}$ for any~$x\in
[-2T,2T]$. 
}}\end{equation}
Then
$$ \|Q\|_{C^{0,1}([-T/16,T/16])}\le
\frac{CM_o\,(1-s)}{T^{2s}}+
C\rho,$$
with~$C>0$ depending on~$n$, $s_0$ and on the structural constants
of the kernel and the potential.
\end{lemma}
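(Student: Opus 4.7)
The plan is to exploit three elements in sequence: the periodicity of $W$ to normalize $\zeta=0$, the $C^{1,1}$ bound on $W$ near its minima to control $|\nabla W(Q)|$ by a multiple of $\rho$ on the buffer interval, and a uniform-in-$s$ interior Lipschitz estimate for $\mathcal{L}$ combined with an explicit tail bound coming from the values of $Q$ outside $[-2T,2T]$.

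First, since $\zeta\in\Z^n$ and $W$ is $\Z^n$-periodic, we have $\nabla W(\tau-\zeta)=\nabla W(\tau)$, so replacing $Q$ by $Q-\zeta$ keeps $\mathcal{L}(Q)$ unchanged and \eqref{EQUAZ} still holds. Without loss of generality $\zeta=0$, so that $|Q(x)|\le\rho$ on $[-2T,2T]$ while $\|Q\|_{L^\infty(\R)}\le M_o$. Because $0$ is a minimum of $W$ and $W\in C^{1,1}$, $\nabla W(0)=0$ and $|\nabla W(\tau)|\le L|\tau|$ for a structural constant $L>0$; together with \eqref{GROW:2} this gives $|a(x)\nabla W(Q(x))|\le C\rho$ on $[-2T,2T]$, so \eqref{EQUAZ} forces $|\mathcal{L}(Q)(x)|\le C\rho$ on the same interval.

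Next I would split the operator to isolate the tail contribution. For $x\in[-T/16,T/16]$ the upper bound in \eqref{KERNEL} and $|Q|\le M_o$ give
\begin{equation*}
\left|4\int_{|y|\ge 2T}K(x-y)\bigl(Q(x)-Q(y)\bigr)\,dy\right|\;\le\;16M_o\int_{|z|\ge T}\frac{\Theta_0(1-s)}{|z|^{1+2s}}\,dz\;\le\;\frac{CM_o(1-s)}{T^{2s}},
\end{equation*}
so the truncated operator $4\int_{|y|<2T}K(x-y)(Q(x)-Q(y))\,dy$ is bounded on $[-T/16,T/16]$ by $C\rho+CM_o(1-s)/T^{2s}$. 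Setting $\tilde Q:=Q\chi_{[-2T,2T]}$, one has $\|\tilde Q\|_{L^\infty(\R)}\le\rho$, and the difference $\mathcal{L}Q-\mathcal{L}\tilde Q$ equals $-4\int_{|y|>2T}K(x-y)Q(y)\,dy$ on $[-T,T]$, itself bounded by $CM_o(1-s)/T^{2s}$. Hence $\mathcal{L}\tilde Q$ is controlled on $[-T,T]$ by $C\rho+CM_o(1-s)/T^{2s}$.

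It then remains to invoke an interior $C^{0,1}$ estimate for $\mathcal{L}$ applied to $\tilde Q$ to conclude
\begin{equation*}
\|\tilde Q\|_{C^{0,1}([-T/16,T/16])}\;\le\;C\rho+\frac{CM_o(1-s)}{T^{2s}},
\end{equation*}
and since $\tilde Q=Q$ on $[-T/16,T/16]$ the Lemma follows. The hard part is securing this interior regularity estimate with constants independent of $s\in[s_0,1)$: a naive rescaling of the standard unit-scale bound $\|u\|_{C^{0,1}(B_{1/2})}\le C(\|u\|_{L^\infty(\R)}+\|f\|_{L^\infty(B_1)})$ up to scale $T$ introduces a factor $T^{2s-1}$ multiplying $\|f\|_{L^\infty}$, which for large $T$ would produce a useless $T^{2s-1}\rho$ term instead of the sharp $C\rho$. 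The remedy is either a Bernstein-type interpolation $\|u\|_{C^{0,1}}\le C\|u\|_{L^\infty}^{1-1/(2s)}\|\mathcal{L}u\|_{L^\infty}^{1/(2s)}$, which collapses the two factors of $\rho$ to a single one, or equivalently to run a Caffarelli-Silvestre-type proof of interior regularity while tracking the $(1-s)$ normalization from \eqref{KERNEL} through every step (in the spirit of the footnote on page~\pageref{FOO:678uhauu}), taking Lemma~\ref{SOB} as the starting point for the bootstrap from $H^s$ to $C^{0,s-1/2}$ and upward.
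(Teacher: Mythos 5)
Your decomposition (normalize $\zeta=0$, localize $Q$ by a cutoff supported in $[-2T,2T]$, bound the tail contribution by $CM_o(1-s)/T^{2s}$, and bound the forcing $a\nabla W(Q)$ by $C\rho$) is essentially the same outline as the paper's proof. The paper uses a smooth cutoff $\tau(x)=\tau_o(x/T)$ and the nonlocal product rule to write ${\mathcal L}(\tau Q)=\tau{\mathcal L}(Q)+Q{\mathcal L}(\tau)-B(Q,\tau)$, while you use the sharp indicator $\chi_{[-2T,2T]}$; for $x\in[-T,T]$ your algebraic identity ${\mathcal L}Q-{\mathcal L}\tilde Q=-4\int_{|y|>2T}K(x-y)Q(y)\,dy$ is correct and the two routes produce the same quality of bound on the right-hand side, so the choice of cutoff is not where the proofs diverge. (You should still note that $\tilde Q$ is continuous on $[-T,T]$ and bounded on $\R$, so that the equation ${\mathcal L}\tilde Q=g$ still holds there in the viscosity sense; the smooth cutoff avoids even raising the question.)

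The genuine gap is the final regularity step, and your worry there is actually a misdiagnosis. The estimate you quote, $\|u\|_{C^{0,1}(B_{1/2})}\le C\bigl(\|u\|_{L^\infty(\R)}+\|f\|_{L^\infty(B_1)}\bigr)$, already involves the \emph{global} $L^\infty$ norm of $u$. Because the cutoff function $\tilde Q$ satisfies $\|\tilde Q\|_{L^\infty(\R)}\le\rho$ globally, you never need to rescale the estimate up to scale $T$: you simply apply it, at unit scale, on each unit interval $J\subset[-T/8,T/8]$, getting $\|\tilde Q\|_{C^{0,1}(J')}\le C\bigl(\rho+\|g\|_{L^\infty(J)}\bigr)\le C\rho+CM_o(1-s)/T^{2s}$, and then take the supremum over such $J$ covering $[-T/16,T/16]$. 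The factor $T^{2s-1}$ you feared never appears, and the Bernstein interpolation and Caffarelli--Silvestre detours you suggest as remedies are not needed. What \emph{is} needed, and what you leave completely open, is a reference to (or a proof of) such a unit-scale interior $C^{0,1}$ estimate whose constant is uniform for $s\in[s_0,1)$ and which holds for the general kernel class \eqref{KERNEL}--\eqref{KERNEL:2}, not just the fractional Laplacian. This is precisely what the paper invokes (Theorem~61 of~\cite{byapp}), and without it your argument stops one citation short of a complete proof.
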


\begin{proof} Up to a translation, we assume that~$\zeta=0$,
hence~\eqref{097ihkGHJHG:00} becomes
\begin{equation}\label{097ihkGHJHG}
{\mbox{$|Q(x)|\le\rho$ for any~$x\in
[-2T,2T]$.
}}\end{equation}
We let~$\tau_o\in C^\infty_0([-1,1],[0,1])$ be
such that~$\tau_o(x)=1$ for any~$x\in \left[ -\frac12,\frac12\right]$.
We define~$\tau(x):= \tau_o(x/T)$ and~$u(x):=\tau(x)\, Q(x)$.
Notice that, by~\eqref{097ihkGHJHG},
\begin{equation}\label{9iokdf67896pp}
{\mbox{$|u(x)|\le\rho$ for any~$x\in
\R$.}}
\end{equation}
By Lemma~\ref{SOB}, we already know that~$Q$
is continuous
and so it is also a viscosity
solution. Therefore (see e.g. formula~(2.11) in~\cite{MR3211862}),
we have that, in the viscosity sense,
\begin{equation}\label{9ioksc82173eudhc}
\begin{split}
{\mathcal{L}}(u) \;&= \tau\,{\mathcal{L}}(Q)+
Q\,{\mathcal{L}}(\tau)
-B(Q,\tau)\\
&= -\tau \,a\,\nabla W(Q)+
Q\,{\mathcal{L}}(\tau)
-B(Q,\tau)
\end{split}
\end{equation}
in~$[-T,T]$, where
$$ B(Q,\tau)(x):=\int_{\R} K(x-y)\,
\big(Q(x)-Q(y)\big)\big(\tau(x)-\tau(y)\big)\,dy.$$
We use~\eqref{KERNEL}
and we notice that, 
for any~$x\in\left[ -\frac{T}4,\frac{T}4\right]$,
\begin{equation}\label{SI097:1}\begin{split}
|B(Q,\tau)(x)|\;&=
\left|
\int_{\R\setminus [-T/2,T/2]} K(x-y)\,
\big(Q(x)-Q(y)\big)\big(\tau(x)-\tau(y)\big)\,dy
\right|\\
&\le 2M_o\,\Theta_0\,(1-s)\,
\int_{\R\setminus [-T/2,T/2]} \frac{
\big|\tau(x)-\tau(y)\big|}{|x-y|^{1+2s}}\,dy\\
&=\frac{2M_o\,\Theta_0\,(1-s)}{T^{2s}}\,
\int_{\R\setminus [-1/2,1/2]} \frac{
\big|\tau_o(T^{-1}x)-\tau_o(y)\big|}{|T^{-1}x-y|^{1+2s}}\,dy
\\ &\le \frac{CM_o\,\Theta_0\,(1-s)}{T^{2s}},\end{split}\end{equation}
for some~$C>0$.

Furthermore
\begin{eqnarray*}&&\int_\R
\frac{|\tau(x+y)+\tau(x-y)-2\tau(x)|}{|y|^{1+2s}}\,dy
\\ &&\qquad=\frac{1}{T^{2s}}\int_\R
\frac{|\tau_o(T^{-1}x+y)+\tau_o(T^{-1}x-y)-2\tau_o(T^{-1}x)|
}{|y|^{1+2s}}\,dy\le \frac{C}{T^{2s}}\end{eqnarray*}
hence
\begin{equation}\label{SI097:2} 
|Q\,{\mathcal{L}}(\tau)|\le 
\frac{CM_o\,\Theta_0\,(1-s)}{T^{2s}},\end{equation}
up to renaming~$C>0$.

Also, we observe that~$\nabla W$ vanishes in~$\Z^n$, thanks to~\eqref{ZERI di W}.
Thus, if we use~\eqref{GROW}, \eqref{GROW:2} and~\eqref{097ihkGHJHG},
we see that if~$x\in[-2T,2T]$
\begin{equation}\label{SI097:3}
\big|\tau(x) \,a(x)\,\nabla W(Q(x))\big|\le
\overline{a} \big|\nabla W(Q(x))-\nabla W(0)\big|
\le \overline{a}\, \|W\|_{C^{1,1}(\R^n)}\,|Q(x)|
\le C\rho,
\end{equation}
up to renaming~$C$.

So we define
$$ f:=- \tau \,a\,\nabla W(Q)+
Q\,{\mathcal{L}}(\tau)
-B(Q,\tau)$$
and we deduce from~\eqref{SI097:1}, \eqref{SI097:2} and~\eqref{SI097:3}
that
\begin{equation}\label{9iokdf67896pp:2}
\|f\|_{L^\infty([-T/4,T/4],\R^n)}\le 
\frac{CM_o\,\Theta_0\,(1-s)}{T^{2s}}+
C\rho,\end{equation}
up to renaming~$C$. In addition, by~\eqref{9ioksc82173eudhc},
we know that
\begin{equation}\label{68jJasd3J}
{\mathcal{L}}(u)=f\end{equation}
in the sense of viscosity.
So, we consider
any interval~$J$ of length~$1$ contained
in~$\left[-\frac{T}{8},\frac{T}{8}\right]$,
and we denote by~$J'$
the dilation of~$J$
by a factor~$1/2$ with respect to the center of the interval.
Thanks to~\eqref{KERNEL} and~\eqref{KERNEL:2}, we can
use Theorem~61 of~\cite{byapp} for the equation
in~\eqref{68jJasd3J} and obtain that
$$ \|u\|_{C^{0,1}(J')}\le
C\,\big(\|u\|_{L^\infty(\R,\R^n)}+\|f\|_{L^\infty(J,\R^n)}\big).$$
{F}rom this, \eqref{9iokdf67896pp} and~\eqref{9iokdf67896pp:2},
we obtain
$$ \|u\|_{C^{0,1}(J')}\le
\frac{CM_o\,\Theta_0\,(1-s)}{T^{2s}}+
C\rho,$$
up to renaming constants, which gives the desired
result.
\end{proof}

\section{Nonlocal glueing arguments}\label{Se2}

In the classical case, it is rather standard to glue
Sobolev functions that meet at a point. In the fractional setting
this operation is more complicated, since the nonlocal interactions
may increase the energy of the resulting functions.
We will provide in the forthcoming Proposition~\ref{GLUE}
a suitable result which will allow us to use glueing methods.

As a technical point, we remark that we will obtain
in these computations very explicit constants
(in particular,
we check the independence of the constants
from~$s$ as~$s$ is close to~$1$).

We first recall a detailed integrability result of classical
flavor (with technical and conceptual differences
in our cases; similar results in a more classical
framework can be found, for instance,
in Chapter~3 of~\cite{mclean}):

\begin{lemma}\label{LAKK}
Let~$\beta\in(0,+\infty)$. Let~$Q:[0,+\infty)\to\R^n$ be a measurable function such
that
$$ [Q]_{H^s([0,1))}<+\infty\; {\mbox{ and }}\; Q(0)=0.$$
Then
\begin{equation}\label{HAG:AAK}\begin{split}&
\int_0^{+\infty} x^{-2s}\,|Q(x)|^2\,dx\\ &\qquad\le C_{s}\,\left[
\,\int_{0}^{\beta} \left[\int_0^x
\frac{|Q(x)-Q(y)|^2}{|x-y|^{1+2s}}\,dy\right]\,dx +
\frac{ 2\|Q\|_{L^\infty((0,+\infty),\R^n)} }{(2s-1)\,\beta^{2s-1} }\right]
,\end{split}\end{equation}
where
\begin{equation}\label{IKAYY}
C_{s}:=2\left(1+
\frac{4}{(2s-1)^2}\right).\end{equation}
\end{lemma}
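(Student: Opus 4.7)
The plan is to run a Hardy-type argument built around the running average $F(x) := \frac{1}{x}\int_0^x Q(y)\,dy$ for $x>0$. Since $s>\frac12$ and $[Q]_{H^s([0,1))}<+\infty$, Lemma~\ref{SOB} guarantees that $Q$ is Hölder continuous near $0$, so the hypothesis $Q(0)=0$ is pointwise meaningful and in particular $F(0^+)=0$. The key decomposition is
$$|Q(x)|^2 \;\le\; 2|Q(x)-F(x)|^2 + 2|F(x)|^2,$$
which reduces~\eqref{HAG:AAK} to bounding $\int_0^\infty x^{-2s}|Q-F|^2\,dx$ and $\int_0^\infty x^{-2s}|F|^2\,dx$ separately.

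For the first integral, I would write $Q(x)-F(x) = \frac{1}{x}\int_0^x(Q(x)-Q(y))\,dy$ and apply Cauchy--Schwarz to obtain $|Q(x)-F(x)|^2 \le \frac{1}{x}\int_0^x |Q(x)-Q(y)|^2\,dy$. Using $|x-y|\le x$ for $y\in(0,x)$ then yields the pointwise bound
$$x^{-2s}|Q(x)-F(x)|^2 \;\le\; \int_0^x \frac{|Q(x)-Q(y)|^2}{|x-y|^{1+2s}}\,dy.$$
Integrating over $x\in(0,\beta)$ reproduces precisely the first summand in~\eqref{HAG:AAK}. On the tail $x\ge\beta$, the crude estimate $|Q-F|\le 2\|Q\|_{L^\infty}$ combined with $\int_\beta^\infty x^{-2s}\,dx = \frac{\beta^{1-2s}}{2s-1}$ produces a contribution of the form claimed in the second summand.

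For the $F^2$ integral, I would use the identity $xF'(x)=Q(x)-F(x)$ (which follows directly from the definition of $F$) together with the classical weighted Hardy inequality, valid for absolutely continuous $F$ with $F(0^+)=0$ since the weight exponent $-2s$ is strictly less than $-1$:
$$\int_0^\infty x^{-2s}|F(x)|^2\,dx \;\le\; \frac{4}{(2s-1)^2}\int_0^\infty x^{2-2s}|F'(x)|^2\,dx \;=\; \frac{4}{(2s-1)^2}\int_0^\infty x^{-2s}|Q(x)-F(x)|^2\,dx.$$
Plugging in the estimate from the previous paragraph and assembling all the pieces produces the overall constant $C_s = 2\bigl(1 + 4/(2s-1)^2\bigr)$ recorded in~\eqref{IKAYY}.

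The main technical step I expect to be nontrivial is justifying the Hardy inequality for $F$ from only the fractional Sobolev regularity of $Q$; I would handle this by first establishing the estimate for a smooth approximation of $Q$ (so that $F$ is classically absolutely continuous and Hardy applies directly), and then passing to the limit through Fatou, using the Hölder continuity provided by Lemma~\ref{SOB} to preserve $F(0^+)=0$ under the approximation. Beyond this, no serious conceptual obstacle arises: the remainder is a careful bookkeeping of constants and of the splitting of the integration region at $x=\beta$.
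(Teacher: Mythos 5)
Your proof is correct and takes essentially the same route as the paper: both decompose $Q$ through the running average $F(x)=\frac1x\int_0^x Q$ (so $V:=Q-F=\frac1x\int_0^x(Q(x)-Q(y))\,dy$), estimate $\int_0^\infty x^{-2s}|V|^2$ directly from the Gagliardo kernel on $(0,\beta)$ plus the crude $L^\infty$ tail on $[\beta,\infty)$, and then control $\int_0^\infty x^{-2s}|F|^2$ by a one-dimensional Hardy inequality applied to $V$, which yields exactly the constant $C_s=2\bigl(1+4/(2s-1)^2\bigr)$. The only cosmetic difference is that you invoke Hardy in its derivative form $\int x^{-2s}|F|^2\le \frac{4}{(2s-1)^2}\int x^{2-2s}|F'|^2$ via the identity $xF'=V$, while the paper establishes and applies the equivalent integral form $\int_0^\infty x^{-1-2\alpha}\bigl[\int_0^x y^{-1}|f(y)|\,dy\bigr]^2\,dx\le \alpha^{-2}\int_0^\infty y^{-1-2\alpha}|f(y)|^2\,dy$ proved by Minkowski's integral inequality (applied to $f=V$, $\alpha=s-\frac12$) together with the pointwise identity $Q=V+\int_0^{\cdot}y^{-1}V(y)\,dy$; this also sidesteps the approximation-and-Fatou concern you raise, which is in fact unnecessary, since once $Q$ is continuous (by Lemma~\ref{SOB}) and bounded, $F$ is automatically $C^1$ on $(0,+\infty)$ with $F(0^+)=0$.
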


For the facility of the reader,
we give the proof of Lemma~\ref{LAKK}
in Appendix~\ref{MATTEO}.

\begin{remark}\label{87uxsII}{\rm
If one formally takes~$\beta=+\infty$
in Lemma~\ref{LAKK}, then~\eqref{HAG:AAK} reads simply
$$ (1-s)\,\int_0^{+\infty} x^{-2s}\,|Q(x)|^2\,dx\le C_{s}\,
[Q]_{H^s([0,+\infty)}^2.$$}\end{remark}

Following is the nonlocal glueing result which fits for our purposes:

\begin{proposition}\label{GLUE}
Let~$T_1\in \R\cup\{-\infty\}$ and~$T_2\in (T_1,+\infty]$.
Let~$x_0\in (T_1,T_2)$ and
$$ \beta \in \big( 0,\;\min\{ T_2-x_0,\; x_0-T_1\}\big].$$
Let~$L:(T_1,x_0]\to\R^n$ 
and~$R:[x_0,T_2)\to\R^n$ be measurable functions
with
\begin{equation}\label{HAGYY00871234}\begin{split}
&\iint_{(T_1,x_0)^2} K(x-y)\,|L(x)-L(y)|^2\,dx\,dy <+\infty\\
{\mbox{and }}\;&
\iint_{(x_0,T_2)^2} K(x-y)\,|R(x)-R(y)|^2\,dx\,dy <+\infty.\end{split}\end{equation}
Assume that~$L(x_0)=R(x_0)$, and let
$$ V(x):=\left\{\begin{matrix}
L(x) & {\mbox{ if }} x\in(T_1, x_0],\\
R(x) & {\mbox{ if }} x\in(x_0,T_2).
\end{matrix}
\right. $$
Then
\begin{equation}\label{7ujsIIIIAA}
\begin{split}
&\iint_{(T_1,T_2)^2} K(x-y)\,|V(x)-V(y)|^2\,dx\,dy \\ \le\;&
\iint_{(T_1,x_0)^2} K(x-y)\,|L(x)-L(y)|^2\,dx\,dy +
\iint_{(x_0,T_2)^2} K(x-y)\,|R(x)-R(y)|^2\,dx\,dy\\
+\;&\tilde{C}_{s}\,(1-s)\,\left[
\int_{x_0-\beta}^{x_0}\left(\int_{x}^{x_0} \frac{|L(x)-L(y)|^2}{|x-y|^{1+2s}}\,dy\right)\,dx +
\int_{x_0}^{x_0+\beta}\left(
\int_{x_0}^x \frac{|R(x)-R(y)|^2}{|x-y|^{1+2s}}\,dy\right)\,dx \right]\\
+\;&
\frac{\hat{C}_{s}\,(1-s)}{\beta^{2s-1}}\,\Big[
\|L\|_{L^\infty((T_1,x_0),\R^n)} +
\|R\|_{L^\infty((x_0,T_2),\R^n)}\Big],
\end{split}\end{equation}
where 
$$ \tilde{C}_{s}:=\frac{2\Theta_0\,C_{s}}{s}
\;{\mbox{ and }}\;
\hat{C}_{s}:=\frac{4\,\Theta_0\,C_{s}}{s\,(2s-1)},$$
and~$C_{s}$ is given in~\eqref{IKAYY}.
\end{proposition}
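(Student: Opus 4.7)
The proof plan follows the natural splitting of the integration domain at the glueing point $x_0$.

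First, I would decompose
\[
\iint_{(T_1,T_2)^2} K(x-y)\,|V(x)-V(y)|^2\,dx\,dy
\]
into the two ``diagonal'' contributions over $(T_1,x_0)^2$ and $(x_0,T_2)^2$ --- which by definition of $V$ coincide exactly with the first two terms on the right-hand side of \eqref{7ujsIIIIAA} --- plus (twice, by the symmetry $K(-z)=K(z)$) the ``cross'' integral over $(T_1,x_0)\times(x_0,T_2)$. All the work lies in bounding this cross integral.

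For the cross integral I would exploit the matching condition $L(x_0)=R(x_0)$ by writing
\[
L(x)-R(y) \;=\; \bigl(L(x)-L(x_0)\bigr)+\bigl(R(x_0)-R(y)\bigr)
\]
and using $(a+b)^2\le 2a^2+2b^2$. This separates the cross integral into one piece depending only on $L$ and one piece depending only on $R$. For the $L$-piece, I would then integrate out the $y$-variable first using the upper bound in \eqref{KERNEL}:
\[
\int_{x_0}^{T_2} K(x-y)\,dy \;\le\; \Theta_0(1-s)\int_{x_0}^{+\infty}\frac{dy}{(y-x)^{1+2s}} \;=\; \frac{\Theta_0(1-s)}{2s\,(x_0-x)^{2s}},
\]
which reduces the $L$-piece to the one-dimensional weighted integral $\int_{T_1}^{x_0} (x_0-x)^{-2s}|L(x)-L(x_0)|^2\,dx$, up to the explicit prefactor $\Theta_0(1-s)/(2s)$.

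Next, I would substitute $t=x_0-x$ and set $\tilde L(t):=L(x_0-t)-L(x_0)$, so that the weighted integral becomes $\int_0^{x_0-T_1} t^{-2s}|\tilde L(t)|^2\,dt$ with $\tilde L(0)=0$. Extending $\tilde L$ by zero to $[0,+\infty)$ (which leaves the integrand unchanged on its honest range and keeps $\|\tilde L\|_{L^\infty}\le 2\|L\|_{L^\infty}$), I would apply Lemma~\ref{LAKK} with the prescribed $\beta\le x_0-T_1$. Inverting the change of variables translates the Gagliardo-type term of Lemma~\ref{LAKK} into the iterated integral on $(x_0-\beta,x_0)$ appearing in \eqref{7ujsIIIIAA}, while the tail term produces the contribution proportional to $\|L\|_{L^\infty}/\beta^{2s-1}$. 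A mirror-image argument (with $t=y-x_0$) handles the $R$-piece, and collecting the prefactors yields \eqref{7ujsIIIIAA} with the stated constants $\tilde C_s$ and $\hat C_s$.

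The main obstacle I expect is precisely this last step: one must check that extending $\tilde L$ past $x_0-T_1$ does not contaminate the Gagliardo seminorm appearing on the right-hand side of Lemma~\ref{LAKK} --- this is exactly why the hypothesis $\beta\le x_0-T_1$ is imposed, ensuring that all pairs $0<y<x<\beta$ used in the lemma lie in the original domain of $\tilde L$ --- and that the constants propagate cleanly through the change of variables, the factor $2$ from the triangle inequality, and the factor $2$ from the symmetry splitting. The preceding steps amount to an algebraic decomposition and a one-sided integration against the kernel upper bound, so they should pose no conceptual difficulty.
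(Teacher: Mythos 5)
Your proposal follows exactly the strategy of the paper's proof: split the double integral into diagonal terms plus a symmetric cross term, expand $|L(x)-R(y)|^2\le 2|L(x)-L(x_0)|^2+2|R(y)-R(x_0)|^2$ using the matching condition, integrate out the far variable against the kernel upper bound to produce the weights $(x_0-x)^{-2s}$ and $(y-x_0)^{-2s}$ with the prefactor $\Theta_0(1-s)/(2s)$, and then invoke Lemma~\ref{LAKK} (whose hypothesis $\beta\le\min\{x_0-T_1,T_2-x_0\}$ guarantees, as you correctly observe, that the Gagliardo-type integral on $(x_0-\beta,x_0)$ stays inside the honest domain). The only cosmetic differences are that the paper normalizes $x_0=0$ and $L(x_0)=R(x_0)=0$ at the outset and extends $L,R$ past $T_1,T_2$ by their boundary values rather than by zero; both routes are valid and yield the claimed constants $\tilde C_s$ and $\hat C_s$.
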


\begin{remark} {\rm In the spirit of Remark~\ref{87uxsII},
we observe that
if one takes~$K(x):=\frac{1-s}{|x|^{1+2s}}$, then
one can formally take~$\theta_0=\Theta_0=1$ and~$\beta=+\infty$, and also~$T_1=-\infty$ and~$T_2=+\infty$,
hence~\eqref{7ujsIIIIAA}
reduces to
\begin{equation}\label{oaOO}
[V]_{H^s(\R)}^2\le (1+\tilde{C}_{s})\,\Big(
[L]_{H^s((-\infty,x_0))}^2+[R]_{H^s((x_0,+\infty))}^2\Big),\end{equation}
with
$$ \tilde{C}_{s}=
\frac{4}{s}\, \left(1+
\frac{4}{(2s-1)^2}\right).$$
We stress that formula~\eqref{7ujsIIIIAA}
is more complicated, but more precise, than~\eqref{oaOO}:
for instance, if one sends~$s\to1$ in~\eqref{7ujsIIIIAA}
for a fixed~$\beta>0$ and then sends~$\beta\to0$,
one recovers the classical Sobolev case of functions in~$H^1((T_1,T_2))$,
namely that
\begin{equation}\label{78JA:AKK} [V]_{H^1((T_1,T_2))}^2\le 
[L]_{H^1((T_1,x_0))}^2+[R]_{H^1((x_0,T_2))}^2.\end{equation}
On the other hand, formula~\eqref{oaOO} in itself cannot recover~\eqref{78JA:AKK},
since it looses a constant.

In our framework, the possibility of having good control on the constants
plays an important role, for example, in the proof of the forthcoming
Proposition~\ref{STICA888}.
}\end{remark}

\begin{proof}[Proof of Proposition~\ref{GLUE}] Up to a translation, we assume 
that~$x_0=0$ and~$L(x_0)=R(x_0)=0$. We also denote~$D^+:=(0,T_2)$
and~$D^-:=(T_1,0)$. If~$T_1\ne -\infty$, we notice that~$L(T_1)$
may be defined by uniform
continuity, thanks to \eqref{HAGYY00871234} and Lemma~\ref{SOB}.
Thus, we can extend~$L(x):=L(T_1)$ for any~$x\le
T_1$. Similarly, if~$T_2\ne+\infty$, we extend~$R(x):=R(T_2)$ for any~$x>T_2$. In this way,
by Lemma~\ref{LAKK},
\begin{eqnarray*} &&\int_{D^-} |x|^{-2s}\,|L(x)|^2\,dx\\ &&\qquad\le 
C_{s}\,\left[ 
\,\iint_{(-\beta,0)\times(x,0)} \frac{|L(x)-L(y)|^2}{|x-y|^{1+2s}}\,dx\,dy +
\frac{ 2\|L\|_{L^\infty(D^-,\R^n)} }{(2s-1)\,\beta^{2s-1} }\right]\\
{\mbox{and }}&&
\int_{D^+} |x|^{-2s}\,|R(x)|^2\,dx
\\ &&\qquad
\le C_{s}\,\left[
\iint_{(0,\beta)\times(0,x)} \frac{|R(x)-R(y)|^2}{|x-y|^{1+2s}}\,dx\,dy +
\frac{ 2\|R\|_{L^\infty(D^+,\R^n)} }{(2s-1)\,\beta^{2s-1} }\right],\end{eqnarray*}
where~$C_{s}$ is given in~\eqref{IKAYY}.
Therefore, decomposing~$(T_1,T_2)$ into
the two intervals~$D^-$ and~$D^+$, and
recalling~\eqref{KERNEL},
\begin{eqnarray*}&&
\iint_{(T_1,T_2)^2} K(x-y)\,|V(x)-V(y)|^2\,dx\,dy\\ && \quad-
\iint_{(D^-)^2} K(x-y)\,|L(x)-L(y)|^2\,dx\,dy-
\iint_{(D^+)^2} K(x-y)\,|R(x)-R(y)|^2\,dx\,dy\\
&=& 2\iint_{D^-\times D^+} K(x-y)\,|L(x)-R(y)|^2\,dx\,dy
\\ &\le& 4
\iint_{D^-\times D^+} K(x-y)\,\Big( |L(x)|^2+|R(y)|^2\Big)\,dx\,dy
\\ &\le& 4\,\Theta_0\,(1-s) \iint_{D^-\times D^+}
\frac{|L(x)|^2+|R(y)|^2}{|x-y|^{1+2s}}\,dx\,dy
\\ &\leq& 
\frac{4\,\Theta_0\,(1-s)}{2s} 
\left[
\int_{D^-} |x|^{-2s} |L(x)|^2\,dx
+ \int_{D^+} |y|^{-2s} |R(y)|^2\,dy\right]\\&\le&
\frac{2\,\Theta_0\,(1-s)\;C_{s}}{s} 
\,\left[
\iint_{(-\beta,0)\times(x,0)} \frac{|L(x)-L(y)|^2}{|x-y|^{1+2s}}\,dx\,dy \right.\\
&&+\left.
\iint_{(0,\beta)\times(0,x)} \frac{|R(x)-R(y)|^2}{|x-y|^{1+2s}}\,dx\,dy +
\frac{ 2\,\|L\|_{L^\infty(D^-,\R^n)} }{(2s-1)\,\beta^{2s-1} }+
\frac{ 2\,\|R\|_{L^\infty(D^+,\R^n)} }{(2s-1)\,\beta^{2s-1} }\right]
\end{eqnarray*}
as desired.
\end{proof}

\section{A notion of clean intervals and clean points}\label{CLEAN:SECT}

In the classical case, a standard tool consists in glueing
together orbits or linear functions. Due to the analysis
performed in Section~\ref{Se2}, we see that the situation
in the nonlocal case is rather different, since the terms
``coming from infinity'' can produce (and do produce, in general)
a nontrivial contribution to the energy.\medskip

To overcome this difficulty, we will need to modify the
classical variational tools concerning the glueing of different orbits
and of orbits and linear functions. Namely, in our case,
we will always perform this glueing at some ``clean points''
that not only produces values of the functions involved close to the integers,
but also that maintains the function close to the integer value
in a suitably large interval. This will allow us to use the
regularity theory in Section~\ref{SeR} to see that the glueing
occurs with ``almost horizontal'' tangent in a large interval
and, consequently, to bound uniformly the nonlocal contributions
arising from the nonlocal glueing procedure discussed in
Section~\ref{Se2}.

Of course, this part is structurally very different from the classical
case and, to this end, we introduce some new terminology.

\begin{definition}\label{DEF:CLEAN} 
Given~$\rho>0$ and~$Q:\R\to\R^n$, we say that an interval~$J\subseteq\R$
is a ``clean interval'' for~$(\rho,Q)$
if~$|J|\ge |\log\rho|$ and there exists~$\zeta\in\Z^n$
such that
$$ \sup_{x\in J}|Q(x)-\zeta|\le\rho.$$
\end{definition}

Of course, the choice of scaling logarithmically
the horizontal length of the interval with respect to the vertical
oscillations in Definition~\ref{DEF:CLEAN}
is for further computational convenience,
and other choices are
also possible (the convenience of this logarithmic choice
will be explained in details in the forthcoming Remark~\ref{DIAMOND}).

\begin{definition}\label{DEF:CLEAN:PT}
If~$J$ is a bounded clean interval for~$(\rho,Q)$,
the center of~$J$ is called a ``clean point'' for~$(\rho,Q)$.
\end{definition}

Any sufficiently long interval contains a clean interval, and thus
a clean point, according to the following result:

\begin{lemma}\label{CLEAN:LEMMA}
Let~$c_0$ and~$r$ be as in~\eqref{GROW}.
Let~$\underline{a}$ be as in~\eqref{GROW:2} and let~$J\subseteq\R$ be an
interval. Let~$Q:\R\to\R^n$, with~$I(Q)\in(0,+\infty)$.
Let~$\rho\in(0,r]$ with
\begin{equation}\label{09ijn67THJ}\left(
\frac{\rho}{2S_0\,\sqrt{\theta_0^{-1}\,E(Q)}}\right)^{\frac{2}{2s-1}}\le|\log\rho|.\end{equation}
Suppose that
\begin{equation}\label{J:ASSU}
|J|\ge 
\frac{\big[1+ 6\, \big( 2S_0\big)^{\frac{2}{2s-1}} 
\big(I(Q)\big)^{\frac{2s}{2s-1}}\big]
\,|\log\rho|}{c_0\,\underline{a}\,\theta_0^{\frac{1}{2s-1}}\,
\rho^{\frac{4s}{2s-1}}}.
\end{equation}
Then there exists
a clean interval for~$(\rho,Q)$ that is contained in~$J$.
\end{lemma}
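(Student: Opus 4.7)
My plan is to argue by contradiction: assuming that no clean interval for $(\rho,Q)$ is contained in $J$, I would derive an upper bound on $|J|$ incompatible with~\eqref{J:ASSU}. Set
\[
\ell_\rho := \left(\frac{\rho}{2 S_0 \sqrt{\theta_0^{-1} E(Q)}}\right)^{\frac{2}{2s-1}},
\]
so that~\eqref{09ijn67THJ} precisely reads $\ell_\rho \le |\log\rho|$. I would partition $J$ into $M:=\lfloor |J|/|\log\rho|\rfloor$ consecutive disjoint subintervals $J_1,\dots,J_M$ of length exactly $|\log\rho|$. The contradiction hypothesis implies that each $J_i$ fails to be clean, so for every $\zeta\in\Z^n$ there is a point of $J_i$ at which $|Q-\zeta|>\rho$. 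The plan is to extract from this non-cleanness a quantitative lower bound of the form $\int_{J_i} a(x)W(Q(x))\,dx\gtrsim c_0\underline{a}\rho^2\ell_\rho$.

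For each $i$ I would split into two subcases. If $\mathrm{dist}(Q(x),\Z^n)\ge\rho$ throughout $J_i$, then~\eqref{GROW}, combined with the positive minimum of $W$ on the compact ``middle'' region $\{\mathrm{dist}(\cdot,\Z^n)\ge r\}$ (absorbed into $c_0$ by a harmless renaming, in the same spirit as for Lemma~\ref{FUrbJAK}), gives $W(Q)\ge c_0\rho^2$ on $J_i$, and integrating over its full length $|\log\rho|\ge\ell_\rho$ settles the claim. Otherwise, I would pick $x_0\in J_i$ with $\mathrm{dist}(Q(x_0),\Z^n)<\rho$, let $\zeta_0\in\Z^n$ be the nearest integer (unique since $\rho\le r\le 1/4$), and exploit non-cleanness with respect to $\zeta_0$ to find $y_0\in J_i$ with $|Q(y_0)-\zeta_0|>\rho$. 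The scale-invariant H\"older bound $[Q]_{C^{0,s-\frac12}(I)}\le S_0\sqrt{\theta_0^{-1}E(Q)}$ on any interval $I$ of length $\le\rho_0$ --- obtained by rescaling Lemma~\ref{SOB} and using~\eqref{CONTROL} --- makes $Q$ continuous and produces a crossing point $x_1\in J_i$ with $|Q(x_1)-\zeta_0|=\rho$. The same H\"older bound then gives $|Q(x)-Q(x_1)|\le\rho/2$ for $|x-x_1|\le\ell_\rho$, so $|Q(x)-\zeta_0|\in[\rho/2,3\rho/2]$ on that neighborhood; and since distinct integers are at distance $\ge 1\ge 4\rho$, no other integer can become closer to $Q(x)$ there. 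Hence $\mathrm{dist}(Q(x),\Z^n)=|Q(x)-\zeta_0|$ lies in that annulus, and~\eqref{GROW} delivers $W(Q(x))\ge c_0\rho^2/4$. Because $|J_i|\ge\ell_\rho$, at least one of $[x_1-\ell_\rho,x_1]$ and $[x_1,x_1+\ell_\rho]$ contains a subinterval of length $\ge\ell_\rho/2$ inside $J_i$, giving $\int_{J_i} aW(Q)\,dx\gtrsim c_0\underline{a}\rho^2\ell_\rho$.

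Summing over $i=1,\dots,M$, using $M\gtrsim|J|/|\log\rho|$ (forced by~\eqref{J:ASSU}) and $I(Q)\ge\int_J aW(Q)\,dx$, I would arrive at
\[
I(Q) \gtrsim \frac{c_0\,\underline{a}\,\rho^2\,\ell_\rho\,|J|}{|\log\rho|}.
\]
Substituting the explicit form of $\ell_\rho$ and using $E(Q)\le I(Q)$ to replace $E(Q)^{1/(2s-1)}$ by $I(Q)^{1/(2s-1)}$, this rearranges to
\[
|J| \lesssim \frac{(2S_0)^{2/(2s-1)}\,I(Q)^{2s/(2s-1)}\,|\log\rho|}{c_0\,\underline{a}\,\theta_0^{1/(2s-1)}\,\rho^{4s/(2s-1)}},
\]
in direct contradiction with~\eqref{J:ASSU}, whose numerator carries the factor $6(2S_0)^{2/(2s-1)}I(Q)^{2s/(2s-1)}$ tuned precisely to dominate the implicit constant of the upper bound. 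The main obstacle I foresee is the second subcase above: one has to use H\"older regularity to pin $Q$ near a \emph{single} integer $\zeta_0$ on a subinterval of length $\sim\ell_\rho$ lying inside $J_i$, while carefully verifying that no other integer ever becomes closer and that $Q$ remains in the annulus where~\eqref{GROW} actually delivers the quadratic lower bound on $W$. The logarithmic scaling $|\log\rho|$ in Definition~\ref{DEF:CLEAN} is adopted precisely so that~\eqref{09ijn67THJ} becomes the clean statement $\ell_\rho\le|\log\rho|$, ensuring that such a subinterval always fits inside $J_i$.
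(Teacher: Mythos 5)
Your proof is correct and matches the paper's strategy essentially step for step: argue by contradiction, tile $J$ by disjoint subintervals of length $|\log\rho|$, use the H\"older estimate coming from Lemma~\ref{SOB} together with~\eqref{GROW} to extract $\gtrsim c_0\underline{a}\rho^2\ell_\rho$ of potential energy from each non-clean tile, and sum to contradict~\eqref{J:ASSU}. The only difference is cosmetic: the paper collapses your two-case analysis into the single observation that non-cleanness, together with the continuity of $Q$ and the connectedness of $Q(J_i)$, already forces a point $p_i\in J_i$ with ${\rm dist}(Q(p_i),\Z^n)>\rho$ (otherwise $Q(J_i)$ would lie in one ball $\overline{B_\rho(\zeta)}$, making $J_i$ clean), after which the H\"older bound around $p_i$ directly yields an $\ell_\rho$-long stretch where ${\rm dist}(Q,\Z^n)\ge\rho/2$.
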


\begin{proof} Assume, by contradiction, that
\begin{equation}\label{90iuojdc876543rg}
{\mbox{$J$ does not contain
any clean subinterval.}} \end{equation}
By~\eqref{J:ASSU}, the interval~$J$
contains~$N$ disjoint subintervals, say~$J_1,\dots,J_N$,
each of length~$|\log\rho|$, with
\begin{equation}\label{98iiogur76yuh}
N\ge \frac{5\, \big( 2S_0\big)^{\frac{2}{2s-1}} 
\big(I(Q)\big)^{\frac{2s}{2s-1}}}{c_0\,\underline{a}\,\theta_0^{\frac{1}{2s-1}}\,
\rho^{\frac{4s}{2s-1}}}.
\end{equation}
By~\eqref{90iuojdc876543rg}, none of the subintervals~$J_i$
is clean. Hence, for any~$i\in\{1,\dots,N\}$, there exists~$p_i\in J_i$
such that~$Q(p_i)$ stays at distance larger than~$\rho$ from the integer
points. Now, letting
$$ \ell_\rho :=
\left(  \frac{\rho}{2S_0\,\sqrt{\theta_0^{-1}\,E(Q)}}\right)^{\frac{2}{2s-1}}$$
and recalling Lemma~\ref{SOB}, we have that,
for any~$x\in J'_i:=[p_i-\ell_\rho,p_i+\ell_\rho]$,
$$ |Q(x)-Q(p_i)|\le [Q]_{C^{0,s-\frac12}(J_i)} |x-p_i|^{s-\frac12}
\le S_0\,\sqrt{\theta_0^{-1}\,E(Q)}\, \ell_\rho^{s-\frac12}
=\frac{\rho}{2}.$$
Accordingly,~$Q(x)$ stays at distance larger than~$\frac\rho2$
from the integer points, for any~$x\in J'_i$, and so, by~\eqref{GROW},
$$ W(Q(x))\ge \frac{c_0\,\rho^2}{4}.$$
Also, by~\eqref{09ijn67THJ}, at least half of the interval~$J'_i$
lies in~$J_i$, hence
$$ \int_{J_i\cap J_i'} W(Q(x))\,dx\ge \frac{c_0\,\rho^2\,\ell_\rho}{4}.$$
Summing up over~$i=1,\dots,N$, and using that the intervals~$J_i$
are disjoint, we find that
$$ I(Q)\ge \frac{c_0\,\underline{a}\,\rho^2\,\ell_\rho\, N}{4}.$$
This is a contradiction with~\eqref{98iiogur76yuh}
and so it proves the desired result.
\end{proof}

\begin{remark}\label{56edtycgshd2312989019381}{\rm
In our applications, we will make use of
Lemma~\ref{CLEAN:LEMMA} to orbits whose energy is bounded uniformly.
In this way, condition~\eqref{09ijn67THJ}
simply requires~$\rho$ to be small enough
and~\eqref{J:ASSU} reads
$$|J|\ge\frac{C_*\,|\log\rho|}{\rho^{\frac{4s}{2s-1}}},$$
for some~$C_*>0$.}\end{remark}

\section{Minimization arguments}\label{MI99}

In this section, we introduce the variational problem
that we use in the proof of the main results and we discuss
the basic properties of the minimizers.

For this, we fix~$N\in\N$, $N\ge2$, and we fix~$\zeta_1,\dots,\zeta_N\in\Z^n$
and~$b_1,\dots,b_{2N-2}\in\R$. We assume that~$b_{i+1} \ge b_i + 3$ for any~$i\in\{ 1,\dots,2N-3\}$.

We will use the short notation~$\vec\zeta:=(\zeta_1,\dots,\zeta_N)\in\Z^{nN}$
and~$\vec b:=(b_1,\dots,b_{2N-2})\in\R^{2N-2}$. 
Given~$r$ as in~\eqref{GROW}, we also set
\begin{equation}\label{08453647595436}\begin{split}
\Gamma(\vec\zeta,\vec b) :=\;& \Big\{ Q:\R\to\R^n {\mbox{ s.t. $Q$ is measurable,}}\\
& \qquad {\mbox{ $Q(x)\in\overline{ B_r(\zeta_1)}$ for a.e. $x\in (-\infty, b_1]$,}}\\
& \qquad {\mbox{ $Q(x)\in\overline{ B_r(\zeta_i)}$ for a.e. $x\in [b_{2i-2}, b_{2i-1}]$
and~$i\in\{ 2,\dots,N-1\}$,}}\\
& \qquad {\mbox{ $Q(x)\in\overline{ B_r(\zeta_N)}$ for a.e. $x\in [b_{2N-2},+\infty)$}}
\Big\}.\end{split}\end{equation}
Roughly speaking, the set~$\Gamma(\vec\zeta,\vec b)$
contains all the admissible trajectories that link
any integer point in the array~$\vec\zeta$ to the subsequent one,
up to an error smaller than~$r$, and using the array~$\vec b$
to construct appropriate constrain windows, see Figure~\ref{WIND}.

\begin{figure}
    \centering
    \includegraphics[width=16.8cm]{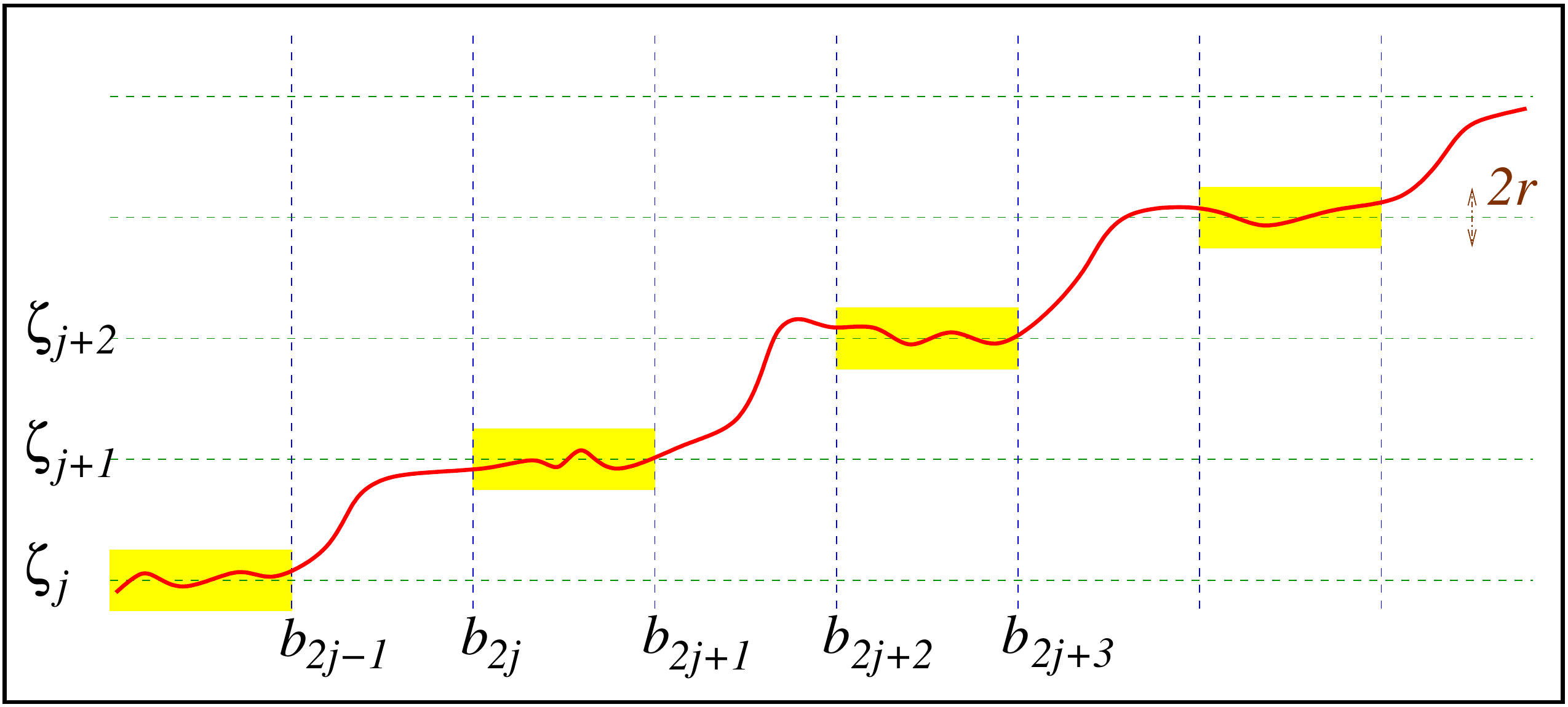}
    \caption{The sets of admissible competitors in~$\Gamma(\vec\zeta,\vec b)$.}
    \label{WIND}
\end{figure}

We also define
$$ M:=\sum_{j=1}^{N-1}|\zeta_{j+1}-\zeta_j|.$$
In this framework, we can consider the minimization problem
of the energy functional introduced in~\eqref{ENE},
according to the following result:

\begin{lemma}\label{HJA:AA} Let~$s_0\in\left(\frac12,1\right)$
and~$s\in[s_0,1)$.
There exists~$Q_*\in \Gamma(\vec\zeta,\vec b)$ such that
\begin{eqnarray}
\label{ASR1} && \sup_{x\in\R} |Q_*(x)-\zeta_1|\le C,\\
\label{ASR2} && I(Q_*)\le C,\\
\label{ASR3} && {\mbox{$[Q_*]_{H^s(J)}\le C$, for any interval~$J$
with~$|J|\le\rho_0$,}}
\\ \label{ORA} && \|Q_* -\zeta_1\|_{C^{0,s-\frac12}(\R)}\le C,
\end{eqnarray}
for some~$C>0$ possibly depending on~$n$, $s_0$, $M$
and
the structural constants 
of the kernel and the potential,
and
\begin{equation}\label{ASR4}
{\mbox{$I(Q_*)\le I(Q)$ for any~$Q\in \Gamma(\vec\zeta,\vec b)$.}}
\end{equation}
In addition,
\begin{equation}\label{ASR5}
\lim_{x\to-\infty} Q_*(x)=\zeta_1\;{\mbox{ and }}\;
\lim_{x\to+\infty} Q_*(x)=\zeta_N.
\end{equation}
\end{lemma}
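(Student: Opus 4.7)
The plan is the direct method of the calculus of variations. First, to confirm that $\inf_{\Gamma(\vec\zeta,\vec b)} I<+\infty$, I build an explicit competitor $\bar Q$ by setting $\bar Q\equiv\zeta_i$ on each constrain window and linearly interpolating between $\zeta_i$ and $\zeta_{i+1}$ on the free intervals $[b_{2i-1},b_{2i}]$ (whose length is at least $3$). Since $W\in C^{1,1}$ and $a\le\overline{a}$, the potential part $\int_\R a\,W(\bar Q)\,dx$ is bounded by a constant depending on $M$ only. For the interaction term I split $\R\times\R$ into $\{|x-y|\le 1\}$ and its complement: on the first piece the Lipschitz bound for $\bar Q$ (with constant of order $M$) together with $s>\tfrac12$ and the upper kernel bound in~\eqref{KERNEL} (with the explicit $(1-s)$ factor) gives a finite bound uniform in $s\in[s_0,1)$; on the second piece, $\bar Q$ is constant outside a bounded set, so the integrand is supported near the transitions and decays like $|x-y|^{-1-2s}$, again with a uniform bound. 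This yields $I(\bar Q)\le C$.

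\smallskip

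Next take a minimizing sequence $\{Q_k\}\subset\Gamma(\vec\zeta,\vec b)$ with $I(Q_k)\le C+1$; since $W\ge 0$ and $a\ge\underline{a}$ this forces $E(Q_k)\le C$ as well. Admissibility provides $Q_k(x)\in\overline{B_r(\zeta_1)}$ on $(-\infty,b_1]$, so Lemma~\ref{FUrbJAK} applies (with $x_0=b_1$): if some component of $|Q_k-\zeta_1|$ exceeds an integer $\nu\ge 1$ somewhere on $\R$, then $I(Q_k)\ge 2\ell_{Q_k}\underline{a}\,\nu\inf_{\mathrm{dist}(\tau,\Z^n)\ge 1/4}W(\tau)$. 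The uniform bound on $E(Q_k)$ forces $\ell_{Q_k}$ to be bounded below by a positive constant depending only on $s_0$ and the structural constants, so $\nu$, and hence $\sup_\R|Q_k-\zeta_1|$, is uniformly bounded. Combining this $L^\infty$ bound with~\eqref{CONTROL} and Lemma~\ref{SOB} (applied on subintervals of length $\min\{1,\rho_0\}$, with the contribution from $|x-y|>\rho_0$ estimated by the $L^\infty$ bound as in the competitor step) delivers a uniform Hölder bound $\|Q_k-\zeta_1\|_{C^{0,s-\frac12}(\R)}\le C$, where the global seminorm reduces to the local one for nearby points and to the $L^\infty$ bound for far-apart points.

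\smallskip

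An Arzel\`a--Ascoli diagonal extraction yields a subsequence converging locally uniformly to some $Q_*$, which lies in $\Gamma(\vec\zeta,\vec b)$ because the pointwise constraints defining this class are closed. Fatou's lemma applied to the nonnegative integrands $K(x-y)|Q_k(x)-Q_k(y)|^2$ and $a(x)W(Q_k(x))$ (both converging pointwise, the second by continuity of $W$) gives $I(Q_*)\le\liminf_k I(Q_k)$, proving the minimality~\eqref{ASR4}, while the uniform bounds pass to $Q_*$ and establish~\eqref{ASR1}--\eqref{ORA}. For~\eqref{ASR5}, admissibility and~\eqref{GROW} give $W(Q_*(x))\ge c_0|Q_*(x)-\zeta_1|^2$ on $(-\infty,b_1]$, so $I(Q_*)<+\infty$ forces $\int_{-\infty}^{b_1}|Q_*-\zeta_1|^2\,dx<+\infty$; together with the global uniform continuity from~\eqref{ORA}, this excludes any sequence $x_n\to-\infty$ along which $|Q_*(x_n)-\zeta_1|\ge\delta>0$ (such a sequence would produce intervals of fixed length on which $|Q_*-\zeta_1|\ge\delta/2$, contradicting $L^2$ integrability), and the symmetric argument handles the limit at $+\infty$. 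I expect the main obstacle to be the bookkeeping needed to keep all constants independent of $s\in[s_0,1)$, which demands careful tracking of the $(1-s)$ factor in~\eqref{KERNEL} throughout the competitor estimate and the fractional Sobolev inequalities, together with the upgrade of the local H\"older control from Lemma~\ref{SOB} to a genuinely global bound on $\R$.
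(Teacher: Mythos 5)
Your outline follows the same direct-method strategy as the paper: construct an explicit admissible competitor with uniformly bounded energy, take a minimizing sequence, obtain a uniform $L^\infty$ bound from Lemma~\ref{FUrbJAK}, upgrade to a H\"older bound via Lemma~\ref{SOB}, pass to the limit by compactness, invoke Fatou, and derive the decay~\eqref{ASR5} from integrability of the potential together with uniform continuity. Your variations (piecewise-linear ramps instead of the paper's smooth cutoff $\eta$, Arzel\`a--Ascoli instead of the compact fractional Sobolev embedding, and an $L^2$-integrability argument for~\eqref{ASR5} instead of summing the potential over a sequence of disjoint intervals) are all essentially equivalent.

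One step is under-argued, however: the bound $E(\bar Q)\le C$ for the competitor. Splitting $\R\times\R$ into $\{|x-y|\le1\}$ and its complement does not close by itself. On the near-diagonal set, the Lipschitz estimate integrated over all of $\R\times\{|x-y|\le 1\}$ diverges, so you must additionally exploit that $\bar Q(x)-\bar Q(y)=0$ unless at least one of $x,y$ lies in a free interval $[b_{2i-1},b_{2i}]$ (or $x,y$ straddle the transition region). Moreover these intervals are only bounded \emph{below} in length, and the Lipschitz constant on $[b_{2i-1},b_{2i}]$ is of order $|\zeta_{i+1}-\zeta_i|/(b_{2i}-b_{2i-1})$, so the near-diagonal contribution from each free interval scales like $|\zeta_{i+1}-\zeta_i|^2/(b_{2i}-b_{2i-1})$, and the length in the denominator is essential for the answer to be independent of $\vec b$; the same care is needed on the far-diagonal set. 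The paper avoids this bookkeeping altogether by writing $Q_0=\zeta_1+\sum_j(\zeta_{j+1}-\zeta_j)\,\eta(\cdot-\beta_j)$ as a superposition of translates of a single fixed profile and applying the Minkowski inequality for the Gagliardo seminorm, which immediately gives $[Q_0]_{H^s(\R)}\le [\eta]_{H^s(\R)}\sum_j|\zeta_{j+1}-\zeta_j|$, uniformly in $s$ and in $\vec b$. Your piecewise-linear competitor also decomposes as a sum of scaled translates of a single unit ramp $\phi$, with $[\phi((\cdot-c_j)/\ell_j)]_{H^s(\R)}=\ell_j^{\,s-\frac12}[\phi]_{H^s(\R)}\cdot\ell_j^{\,1-2s}\cdot\ell_j^{\,s-\frac12}$ computed by scaling (the net exponent $1-2s$ being favorable since $\ell_j\ge 3$ and $s>\tfrac12$), so the Minkowski route applies to your competitor as well; I would recommend using it in place of the near/far split.
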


\begin{proof} Let~$\mu\in C^\infty(\R,\,[0,1/2])$ be such that~$\mu(0)=1/2$
and~$\mu(x)=0$ if~$|x|\ge1$.
Notice that
$$ [1-\mu]_{H^s(\R)}=[\mu]_{H^s(\R)}<+\infty.$$
Let
$$ \eta(x):=\left\{\begin{matrix}
\mu(x) & {\mbox{ if }} x\le 0,\\
1-\mu(x) & {\mbox{ if }} x>0.
\end{matrix}
\right. $$
Notice that~$\eta(x)=0$ if~$x\le -1$
and~$\eta(x)=1$ if~$x\ge1$. Also, by \eqref{oaOO},
$$ [\eta]_{H^s(\R)}^2 \le (1+\tilde{C}_{s})\,\Big(
[\mu]_{H^s( \R)}^2+
[1-\mu]_{H^s( \R)}^2 \Big) = 2(1+\tilde{C}_{s}) \,[\mu]_{H^s(\R)}^2
=:(C'_{s})^2.$$
Let also
$$ \beta_i := \frac{b_{2i-1}+b_{2i}}{2}\qquad{\mbox{
for any }} i\in \{1,\dots,N-1\}$$
and
$$ Q_0(x):=\zeta_1 +\sum_{j=1}^{N-1} (\zeta_{j+1}-\zeta_j)\,
\eta (x-\beta_j).$$
Notice that~$\beta_i$ is an increasing sequence.
We also claim that
\begin{equation}\label{ADM}
Q_0\in \Gamma(\vec\zeta,\vec b).
\end{equation}
To prove this we note that:
\begin{itemize}
\item if~$x\le b_1$ then
$$ x-\beta_j\le b_1-\beta_1 = -\frac{b_2-b_1}{2}\le -\frac{3}{2}$$
for all~$j\in \{1,\dots,N-1\}$, thus~$\eta(x-\beta_j)=0$
for all~$j\in \{1,\dots,N-1\}$, and then~$Q_0(x)=\zeta_1$;
\item if~$i\in\{ 2,\dots,N-1\}$ and~$x\in [b_{2i-2}, b_{2i-1}]$,
then, for all~$j\in \{1,\dots,i-1\}$ we have that
$$ x-\beta_j \ge b_{2i-2} - \beta_{i-1}=\frac{b_{2i-2}-b_{2i-3}}{2}\ge
\frac{3}{2},$$
and thus~$\eta(x-\beta_j)=1$ for all~$j\in \{1,\dots,i-1\}$,
while for all~$j\in \{i,\dots,N-1\}$ we have that
$$ x-\beta_j \le b_{2i-1} - \beta_{i}=-\frac{b_{2i}-b_{2i-1}}{2}\le-
\frac{3}{2},$$
and thus~$\eta(x-\beta_j)=0$ for all~$j\in \{i,\dots,N-1\}$, therefore
a telescopic sum gives that
$$ Q_0(x)=
\zeta_1 +\sum_{j=1}^{i-1} (\zeta_{j+1}-\zeta_j) =
\zeta_1 +(\zeta_{i}-\zeta_1)=\zeta_i;$$
\item if~$x\ge b_{2N-2}$ then
$$ x-\beta_j\ge b_{2N-2}-\beta_{N-1} = \frac{b_{2N-2}-b_{2N-3}}{2}\ge 
\frac{3}{2}$$
for all~$j\in \{1,\dots,N-1\}$, thus~$\eta(x-\beta_j)=1$
for all~$j\in \{1,\dots,N-1\}$, and then a telescopic sum gives that
$$ Q_0(x)=
\zeta_1 +\sum_{j=1}^{N-1} (\zeta_{j+1}-\zeta_j) =
\zeta_1 +(\zeta_{N}-\zeta_1)=\zeta_N.$$
\end{itemize}
These considerations prove~\eqref{ADM}.

Moreover,
$$ [Q_0]_{H^s(\R)} \le \sum_{j=1}^{N-1}|\zeta_{j+1}-\zeta_j|
\,[\eta]_{H^s(\R)} \le C'_{s} \sum_{j=1}^{N-1}|\zeta_{j+1}-\zeta_j|.$$
This and~\eqref{KERNEL} give that
$$ E(Q)\le \Theta_0 \,[Q_0]_{H^s(\R)}^2\le
C'_{s} \Theta_0\,\sum_{j=1}^{N-1}|\zeta_{j+1}-\zeta_j|.$$
Also, we have that~$\eta(x-\beta_j)$ takes integer values 
outside~$[\beta_j-1,\beta_j+1]$
and therefore
$$ \int_\R a(x)\,W(Q_0(x))\,dx\le
\overline{a} \sum_{j=1}^{N-1} \int_{\beta_j-1}^{\beta_j+
1}W(Q_0(x))\,dx \le 2N\overline{a}\,\sup_\R W.$$
Accordingly, we find
\begin{equation}\label{ADMM}
I(Q_0) \le 
C'_{s} \Theta_0\,\sum_{j=1}^{N-1}|\zeta_{j+1}-\zeta_j|
+2N\overline{a}\,\sup_\R W =:C_1.\end{equation}

Now we take a minimizing sequence~$Q_k\in\Gamma(\vec\zeta,\vec b)$, that is
\begin{equation}\label{ADMM:SAR}
\lim_{k\to+\infty} I(Q_k)=\inf_{\Gamma(\vec\zeta,\vec b)} I
\le C_1,\end{equation}
where we also used~\eqref{ADM} and~\eqref{ADMM}.
Then, we write~$\R$ as the disjoint union of intervals
of length~$\rho_0$, say
$$ \R =\bigcup_{\ell\in\N} J_\ell,$$
with~$|J_\ell|=\rho_0$ and it follows from~\eqref{CONTROL}
and \eqref{ADMM:SAR}
that, for any~$\ell\in\N$,
\begin{equation}\label{AD45}
{\mbox{$[Q_k]_{H^s(J_\ell)}$ is bounded independently on~$k$.}}\end{equation}
Also, by~\eqref{ADMM:SAR} and Lemma~\ref{FUrbJAK}, we find that
\begin{equation}\label{AD46}
\sup_{x\in\R} |Q_k(x)-\zeta_1|\le C_2,\end{equation}
for some~$C_2>0$.

By~\eqref{AD45}, \eqref{AD46} and compact embeddings
(see e.g. Theorem~7.1 in~\cite{guida}), and using a diagonal argument,
we obtain that~$Q_k$ converges
a.e. in~$\R$ to some~$Q_*$. By construction, $Q_*\in\Gamma(\vec\zeta,\vec b)$
and, by Fatou Lemma,
$$ \liminf_{k\to+\infty} I(Q_k)\ge I(Q_*).$$
Hence, recalling~\eqref{ADMM:SAR}, we find that~$Q_*$ is the desired
minimizer in~\eqref{ASR4} and that~\eqref{ASR2}
holds true. Then, \eqref{ASR3} follows from~\eqref{CONTROL} and~\eqref{ASR2}.
Moreover, we see that~\eqref{ASR1} is a consequence of~\eqref{AD46},
while~\eqref{ORA} follows from~\eqref{ASR1}, \eqref{ASR3}
and Lemma~\ref{SOB}.

Now we prove~\eqref{ASR5}. We deal with the case of~$x\to+\infty$,
the other case being similar. We argue by contradiction and assume that
there exist~$\alpha_0 >0$ and a sequence~$x_k$ such that~$x_k\to+\infty$
as~$k\to+\infty$ and~$|Q_*(x_k)-\zeta_N|\ge \alpha_0$.
Let~$\ell:=\left(\frac{\alpha_0}{2C}\right)^{\frac{2}{2s-1}}$,
where~$C>0$ is as in~\eqref{ORA}. Then,
by~\eqref{ORA}, we find that, for any~$x\in [x_k-\ell,x_k+\ell]$,
$$ |Q_*(x)-Q_*(x_k)|\le C\,|x-x_k|^{s-\frac12}\le C\,\ell^{\frac{2s-1}{2}}
\le \frac{\alpha_0}{2}$$
and so~$|Q_*(x)-\zeta_N|\ge \frac{\alpha_0}2$
for any~$x\in [x_k-\ell,x_k+\ell]$.
 
Notice also that~$Q_*(x)\in \overline{ B_r(\zeta_N)}$
for any~$x\in [x_k-\ell,x_k+\ell]$, since~$Q_*\in \Gamma(\vec\zeta,\vec b)$,
which says that~$|Q_*(x)-\zeta_N|\in \left[\frac{\alpha_0}2,\,r\right]$.
Therefore, for any~$x\in [x_k-\ell,x_k+\ell]$,
we have that~${\rm dist}(Q_*(x),\Z^n)\ge\alpha_1$, for some~$\alpha_1>0$,
and thus
$$ W(Q_*(x))\ge \inf_{ {\rm dist}(\tau,\Z^n)\ge\alpha_1 } W(\tau).$$
As a consequence
$$ I(Q_*)\ge \underline{a}
\sum_{k=1}^{+\infty} \int_{x_k-\ell}^{x_k+\ell} W(Q_*(x))\,dx\ge
\underline{a}\,\inf_{ {\rm dist}(\tau,\Z^n)\ge\alpha_1 } W(\tau)
\,\sum_{k=1}^{+\infty} (2\ell) =+\infty.$$
This is in contradiction with~\eqref{ASR2} 
and thus we have established~\eqref{ASR5}.
\end{proof}

Now we observe that trajectories
with long excursions have large energy, in a uniform
way, as stated in the following result:

\begin{lemma}\label{LARGE:EN}
Let~$Q\in\Gamma(\vec\zeta,\vec b)$. Assume that
\begin{equation*}
\sup_{x\in\R} |Q_i(x)-\zeta_{1,i}|\ge \nu,\end{equation*}
for some~$\nu\in\N$, $\nu\ge1$ and~$i\in\{1,\dots,n\}$
(where~$\zeta_1=(\zeta_{1,1},\dots,\zeta_{1,n}$).
Then
\begin{equation}\label{TH01GO}
I(Q) \ge \min\left\{ c_1\rho_0\nu,\; \;
\left(\frac{c_1\,c_2}{2s-1}\right)^{\frac{2s-1}{2s}}
\cdot\nu^{\frac{2s-1}{2s} }\right\},\end{equation}
where
$$ c_1:= \underline{a}\,
\inf_{ {\rm dist}\,
(\tau,\; \Z^n) \ge 1/4 } W(\tau)
\qquad\qquad{\mbox{ and }}\qquad\qquad c_2:=
2\,\left( \frac{
\theta_0^{\frac12} }{ 4S_0}\right)^{\frac{2}{2s-1}}
.$$\end{lemma}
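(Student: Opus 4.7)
The plan is to reduce the lemma to the energy lower bound already provided by Lemma~\ref{FUrbJAK}, followed by a one-dimensional optimization in the ``free energy'' $E(Q)$.

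First, I would observe that, by the definition of $\Gamma(\vec\zeta,\vec b)$ in~\eqref{08453647595436}, the trajectory $Q$ satisfies $Q(x)\in\overline{B_r(\zeta_1)}$ for a.e.~$x\le b_1$. Together with the hypothesis that the $i$-th component of $Q-\zeta_1$ reaches size at least $\nu$ somewhere on $\R$, this places us exactly in the setting of Lemma~\ref{FUrbJAK} (with $\zeta := \zeta_1$ and $x_0 := b_1$), and consequently we obtain
$$I(Q) \;\ge\; E(Q) + 2\ell_{Q}\,\underline{a}\,\nu \inf_{\mathrm{dist}(\tau,\Z^n)\ge 1/4} W(\tau) \;=\; E(Q) + 2c_1\,\ell_{Q}\,\nu,$$
where $\ell_Q$ is as in~\eqref{ELL}.

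The proof then splits according to which of the two entries in~\eqref{ELL} realizes the minimum defining $\ell_Q$. If $\ell_Q = \rho_0/2$, the previous display immediately gives $I(Q) \ge c_1\rho_0\nu$, which is the first term in the min on the right-hand side of~\eqref{TH01GO}. If instead $\ell_Q$ equals the second entry in~\eqref{ELL}, then, recalling $c_2 = 2(\theta_0^{1/2}/(4S_0))^{2/(2s-1)}$, a direct manipulation of exponents yields $2c_1\ell_Q\nu = c_1c_2\nu\,E(Q)^{-1/(2s-1)}$, hence
$$I(Q) \;\ge\; E(Q) + \frac{c_1c_2\nu}{E(Q)^{1/(2s-1)}}.$$
It therefore suffices to bound the function $f(E) := E + c_1c_2\nu\,E^{-1/(2s-1)}$ from below on $(0,+\infty)$. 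Since $f$ is convex and blows up at $0$ and $+\infty$, standard calculus (setting $f'=0$) locates its unique minimizer at $E_* = (c_1c_2\nu/(2s-1))^{(2s-1)/(2s)}$, and a short arithmetic check gives $f(E_*) = 2s\cdot E_* \ge E_*$ because $s>1/2$. This produces exactly the second term in~\eqref{TH01GO}.

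The main obstacle is not conceptual but rather bookkeeping: verifying that the exponents $(2s-1)/(2s)$ and $1/(2s-1)$ combine correctly, and that the constant $c_2$ defined in the statement is precisely the one emerging from $\ell_Q$ after the substitution above. All the real work has already been done in Lemma~\ref{FUrbJAK}, so the present lemma is essentially a clean optimization on top of it.
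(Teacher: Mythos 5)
Your proof is correct and follows essentially the same route as the paper's: reduce to the lower bound of Lemma~\ref{FUrbJAK}, split according to which term realizes the minimum in~\eqref{ELL}, and in the second case minimize the convex function $E\mapsto E+c_1c_2\nu\,E^{-1/(2s-1)}$ to obtain the second entry in~\eqref{TH01GO}. As a minor note, you carry the factor $\nu$ consistently through the intermediate display $E(Q)+c_1c_2\nu\,E(Q)^{-1/(2s-1)}$, whereas the paper's line~\eqref{0648193jifdi} and the subsequent one-line description of the auxiliary function drop this $\nu$ (the stated minimizer $t_*$ and the final bound nonetheless already incorporate it, so the paper has a harmless typographical slip which your write-up silently corrects).
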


\begin{proof} We distinguish two cases.
First, if
$$ \left( \frac{1}{ 4S_0\,\big(\theta_0^{-1} E(Q)\big)^{\frac12}
}\right)^{\frac{2}{2s-1}}\ge \frac{\rho_0}{2},$$
then, recalling~\eqref{ELL}, we see that~$ \ell_{Q}=\rho_0/2$
and so, by Lemma~\ref{FUrbJAK},
$$ I(Q)\ge
\rho_0\,\underline{a}\,\nu
\inf_{ {\rm dist}\, (\tau,\; \Z^n) \ge 1/4 } W(\tau),$$
which implies the desired result in~\eqref{TH01GO}
in this case.

Conversely, if
$$ \left( \frac{1}{ 4S_0\,\big(\theta_0^{-1} E(Q)\big)^{\frac12}
}\right)^{\frac{2}{2s-1}}< \frac{\rho_0}{2},$$
we get from~\eqref{ELL} that
$$  \ell_{Q}=
\left( \frac{1}{ 4S_0\,\big(\theta_0^{-1} E(Q)\big)^{\frac12}
}\right)^{\frac{2}{2s-1}} =
\left( \frac{\theta_0^{\frac12} }{ 
4S_0
}\right)^{\frac{2}{2s-1}} \cdot\frac{1}{(E(Q))^{\frac{1}{2s-1}}}.$$
Hence, in this case, an application of Lemma~\ref{FUrbJAK}
gives that
\begin{equation} \label{0648193jifdi}\begin{split}
I(Q)\ge& \;E(Q)+2
\,\underline{a}\,\nu
\inf_{ {\rm dist}\, (\tau,\; \Z^n) \ge 1/4 } W(\tau)
\left( \frac{\theta_0^{\frac12} }{ 
4S_0
}\right)^{\frac{2}{2s-1}} \cdot\frac{1}{(E(Q))^{\frac{1}{2s-1}}}
\\ =& \; E(Q)+\frac{c_1\,c_2}{ (E(Q))^{\frac{1}{2s-1}}}.
\end{split}\end{equation}
A simple calculus also shows that the function
$$ [0,+\infty) \ni t\;\longmapsto\; 
t+\frac{c_1\,c_2}{ t^{\frac{1}{2s-1}}}$$
takes its minimum at~$t_*=
\left(\frac{c_1\,c_2}{2s-1}
\right)^{\frac{2s-1}{2s}}\cdot\nu^{\frac{2s-1}{2s}}$,
where it attains a value larger than~$t_*$.
Accordingly, 
from~\eqref{0648193jifdi},
$$ I(Q) \ge \left(\frac{c_1\,c_2}{2s-1}
\right)^{\frac{2s-1}{2s}}\cdot\nu^{\frac{2s-1}{2s}},$$
which implies~\eqref{TH01GO} in this case.
\end{proof}

Now we define
$$ J_*:= \bigcup_{i=1}^{N-1} (b_{2i-1},b_{2i})$$
and
\begin{eqnarray*}
L_1 &:=& \big\{ x\in (-\infty,b_1] {\mbox{ s.t. }} |Q(x)-\zeta_1|< r\big\},\\
L_i &:=& \big\{ x\in [b_{2i-2}, b_{2i-1}] 
{\mbox{ s.t. }} |Q(x)-\zeta_i|< r\big\},\qquad{\mbox{ with }}i
\in\{ 2,\dots,N-1\},\\
L_N &:=& \big\{ x\in (b_{2N-2},\infty) {\mbox{ s.t. }} |Q(x)-\zeta_N|< r\big\}.
\end{eqnarray*}
Let also
$$ L:=\bigcup_{ i \in\{ 2,\dots,N-1\} } L_i\qquad{\mbox{
and }}\qquad
F:= J_*\cup L.$$
As usual, by taking inner variations, one sees that
in the set~$F$ the minimization problem is ``free''
and so it satisfies an Euler-Lagrange equation, as stated
explicitly in the next result:

\begin{lemma}\label{HJA:AA:2}
Let~$Q_*$ be as in Lemma~\ref{HJA:AA}.
For any~$x\in F$, we have that
\begin{equation}\label{HJA:AA:2:EQ}
{\mathcal{L}}(Q_*)(x) + a(x)\,\nabla W(Q_*(x)) =0,\end{equation} as defined in~\eqref{U756GA}.
\end{lemma}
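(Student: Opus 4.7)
The plan is a standard inner-variations argument based on the minimality \eqref{ASR4}: given $\psi \in C_0^\infty(F, \R^n)$, I would show that $Q_* + t\psi \in \Gamma(\vec\zeta, \vec b)$ for all sufficiently small $|t|$, and then differentiate $t \mapsto I(Q_* + t\psi)$ at $t = 0$ to obtain \eqref{HJA:AA:2:EQ}.

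First I would verify that $F$ is open in $\R$. The set $J_*$ is manifestly open. For $L_i$ with $i \in \{2, \dots, N-1\}$, continuity of $Q_*$ (granted by \eqref{ORA}) yields relative openness in $[b_{2i-2}, b_{2i-1}]$; at an endpoint lying in $L_i$, say $b_{2i-1}$, the strict bound $|Q_*(b_{2i-1}) - \zeta_i| < r$ produces by continuity a left neighborhood inside $L_i$, while $(b_{2i-1}, b_{2i}) \subset J_*$ supplies the right neighborhood, and an analogous argument at $b_{2i-2}$ closes the check. Since $\mathrm{supp}(\psi)$ is compact in the open set $F$, a compactness argument then produces $\delta \in (0, r)$ such that $|Q_*(x) - \zeta_i| \le r - \delta$ for every $x \in \mathrm{supp}(\psi) \cap [b_{2i-2}, b_{2i-1}]$ and every $i \in \{2, \dots, N-1\}$; moreover $\mathrm{supp}(\psi)$ is disjoint from $(-\infty, b_1]$ and from $[b_{2N-2}, +\infty)$, since $L_1$ and $L_N$ are omitted from $L$ and $J_*$ does not meet these half-lines.

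For $|t| \le \delta/(1 + \|\psi\|_{L^\infty})$ the triangle inequality gives $|Q_*(x) + t\psi(x) - \zeta_i| < r$ on $\mathrm{supp}(\psi) \cap L_i$, and outside $\mathrm{supp}(\psi)$ nothing changes; hence $Q_* + t\psi \in \Gamma(\vec\zeta, \vec b)$ and \eqref{ASR4} yields $g(t) := I(Q_* + t\psi) \ge g(0) = I(Q_*)$. Expanding the interaction part of $I$ I find
\begin{align*}
E(Q_* + t\psi) = E(Q_*) &+ 2t \iint_{\R \times \R} K(x-y)\,\big(Q_*(x) - Q_*(y)\big) \cdot \big(\psi(x) - \psi(y)\big)\, dx\, dy \\
&+ t^2 E(\psi),
\end{align*}
where the cross term is finite by Cauchy--Schwarz applied to the bilinear form underlying $E$, using $E(Q_*) < +\infty$ from \eqref{ASR2} and $E(\psi) < +\infty$ (which holds by the upper bound in \eqref{KERNEL} together with $\psi \in C_0^\infty(\R, \R^n)$). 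Since $W \in C^{1,1}(\R^n)$, $a \in L^\infty(\R)$ and $\psi$ has compact support, a Taylor expansion gives
\[ \int_\R a(x)\, W\big(Q_*(x) + t\psi(x)\big)\, dx = \int_\R a(x)\, W(Q_*(x))\, dx + t \int_\R a(x)\, \nabla W(Q_*(x)) \cdot \psi(x)\, dx + O(t^2). \]
Combining these expansions and noting that $g$ attains a local minimum at $t = 0$, the coefficient of $t$ in $g(t) - g(0)$ must vanish, and this is precisely the weak equation \eqref{U756GA} with $f = a\, \nabla W(Q_*)$ and test functions supported in $F$, i.e., \eqref{HJA:AA:2:EQ}.

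The main obstacle is the admissibility step: the uniform slack $\delta$ in the closed-ball constraint $|Q - \zeta_i| \le r$ along $\mathrm{supp}(\psi)$ is what makes the perturbed competitor $Q_* + t\psi$ lie in $\Gamma(\vec\zeta, \vec b)$, and it rests on openness of $F$ via continuity of $Q_*$ together with compactness of $\mathrm{supp}(\psi)$. Once this is in place, the first-variation computation is routine, with the only nonlocal-specific verification being the absolute convergence of the bilinear cross term.
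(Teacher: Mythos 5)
Your proof is correct and follows exactly the approach the paper intends: the paper states only ``as usual, by taking inner variations, one sees that in the set~$F$ the minimization problem is ``free'','' leaving the verification to the reader, and your argument supplies precisely those details (openness of~$F$, the compactness/uniform-slack argument for admissibility of $Q_*+t\psi$, and the routine first-variation computation with the nonlocal cross term controlled by Cauchy--Schwarz).
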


\begin{remark}\label{DIAMOND}
{\rm Given an interval~$J\subseteq\R$, 
it is convenient to introduce the notation
\begin{equation}\label{98uiojkmyHGJJJJJ:0}
E_J(Q):= \iint_{J\times J} K(x-y)\,\big|Q(x)-Q(y) \big|^2\,dx\,dy.\end{equation}
For instance, comparing with~\eqref{def E:Q},
we have that~$E_\R=E$. Also, if~$J$ is the disjoint union of~$J_1$ and~$J_2$,
then
\begin{equation*}
E_J(Q) \ge E_{J_1}(Q)+E_{J_2}(Q).
\end{equation*}
With this notation, we are able to glue two functions~$L$
and~$R$ at a point~$x_0$ under the additional 
assumption that
$$ [L]_{C^{0,1}([x_0-\beta,x_0])}\le\eta\quad{\mbox{ and }}\quad
[R]_{C^{0,1}([x_0-\beta,x_0])}\le\eta,$$ for some~$\eta>0$. Indeed,
in this case,
\begin{eqnarray*}&&
\int_{x_0}^{x_0+\beta}\left(
\int_{x_0}^x \frac{|R(x)-R(y)|^2
}{|x-y|^{1+2s}}
\,dy\right)\,dx\le
\eta^2  \int_{x_0}^{x_0+\beta}
\left(
\int_{x_0}^x |x-y|^{1-2s}\,dy\right)\,dx\\
&&\qquad = \frac{\eta^2 \,\beta^{3-2s}}{2\,(3-2s)\,(1-s)},\end{eqnarray*}
and, similarly,
$$ \int_{x_0-\beta}^{x_0}\left(
\int_x^{x_0} \frac{
|L(x)-L(y)|^2}{|x-y|^{1+2s}}\,dy\right)\,dx\le \frac{
\eta^2 \,\beta^{3-2s}}{2\,(3-2s)\,(1-s)}.$$
Therefore, Proposition~\ref{GLUE} gives that
\begin{equation}\label{89ioKKKAIIJHH}
E_{(T_1,T_2)}(V)- E_{(T_1,x_0)}(L)-E_{(x_0,T_2)}(R)
\le C\,\left( \eta^2 \,\beta^{3-2s} + \frac{\|L\|_{L^\infty((T_1,x_0),\R^n)}
+\|R\|_{L^\infty((x_0,T_2),\R^n)} }{\beta^{2s-1}} \right),
\end{equation}
for some~$C>0$.

In particular, one can consider a clean point~$x_0$ (according to
Definitions~\ref{DEF:CLEAN} and~\ref{DEF:CLEAN:PT})
and glue an optimal trajectory~$Q_*$ 
to a linear interpolation with the integer~$\zeta$,
close to~$Q_*(x_0)$, namely consider
$$ V(x):=\left\{
\begin{matrix}
\zeta & {\mbox{ if }} x\le x_0-1,\\
\zeta\,(x_0-x)+Q_*(x_0)\,(x-x_0+1) & {\mbox{ if }}x\in (x_0-1,x_0),\\
Q_*(x) & {\mbox{ if }} x\ge x_0.
\end{matrix}
\right. $$
In this way, and taking~$\rho>0$ suitably small,
by Definitions~\ref{DEF:CLEAN}
and~\ref{DEF:CLEAN:PT}, we know that~$Q_*$ is $\rho$-close to
an integer in~$[x_0-32\beta,x_0+32\beta]$, with
\begin{equation}\label{098idkscf7gihoj777}
\beta=\beta(\rho)=
\frac{|\log\rho|}{32}.\end{equation} In particular, by Lemma~\ref{HJA:AA:2},
we have that~$Q_*$ is solution of~\eqref{EQUAZ} in~$[x_0-32\beta,x_0+32\beta]$.
Also, due to~\eqref{ASR1} 
and~\eqref{ASR2}, both~$\|Q_*\|_{L^\infty(\R,\R^n)}$
and~$I(Q_*)$ are bounded uniformly. Consequently, we can use
Lemma~\ref{9jkdJJKA} with~$T:=16 \beta$ and find that
\begin{equation}\label{098idkscf7gihoj777:BIS}
[Q_*]_{C^{0,1}([x_0-\beta,x_0+\beta ])}\le C\,
\left( \frac{1}{\beta^{2s}}+\rho\right),\end{equation}
up to renaming~$C>0$.

This says that in this case we can take~$\eta:=C\,\left( \frac{1}{\beta^{2s}}+\rho\right)$
and bound the right hand side of~\eqref{89ioKKKAIIJHH} by
\begin{equation} \label{97iodf89789yfghjkHHHHHJ}
C\,\left( \rho^2 \beta^{3-2s}+
\frac{1}{\beta^{3(2s-1)}}+ \frac{1}{\beta^{2s-1}}\right)=\diamondsuit,\end{equation}
thanks to~\eqref{098idkscf7gihoj777},
where we use the notation ``$\diamondsuit$''
to denote quantities that are as small as we wish when~$\rho$
is sufficiently small.

In this way, Proposition~\ref{GLUE} can be used
repeatedly to glue $m$ functions, say~$Q_1,\dots,Q_m$
that are alternatively minimal orbits and linear interpolations
at clean points~$x_1,\dots,x_{m-1}$ where they attach the one
to the other. In this case, if~$Q$ is the function produced
by this glueing procedure, we have that
\begin{equation}\label{DIAMOND:EQ}
\begin{split}
E(Q) \;&\le E_{(-\infty,x_1)}(Q_1)+E_{(x_1,+\infty)}(Q)+\diamondsuit \\
&\le E_{(-\infty,x_1)}(Q_1)+E_{(x_1,x_2)}(Q_2)+
E_{(x_2,+\infty)}(Q)+\diamondsuit\\
&\le E_{(-\infty,x_1)}(Q_1)+E_{(x_1,x_2)}(Q_2)+
E_{(x_2,x_3)}(Q_3)+ E_{(x_3,+\infty)}(Q)+\diamondsuit\\
&\le \dots\le E_{(-\infty,x_1)}(Q_1)+E_{(x_1,x_2)}(Q_2)+
\dots+
E_{(x_{m-2},x_{m-1})}(Q_{m-1})+
E_{(x_{m-1},+\infty)}(Q_m)+\diamondsuit.
\end{split}
\end{equation}
where Proposition~\ref{GLUE} and~\eqref{97iodf89789yfghjkHHHHHJ}
were used repeatedly.
}\end{remark}

\section{Stickiness properties of energy minimizers}\label{MI100}

Now we show that the minimizers have the tendency to
stick at the integers once they arrive sufficiently close to them.
For this, we recall the notation in~\eqref{98uiojkmyHGJJJJJ:0}
and we have:

\begin{proposition}\label{STICA888}
Let~$\rho>0$, $s_0\in\left(\frac12,1\right)$ and~$s\in[s_0,1)$.
Let~$Q_*$ be as in Lemma~\ref{HJA:AA}.

Let~$x_1$, $x_2\in\R$ be clean points for~$(\rho,Q_*)$,
according to Definition~\ref{DEF:CLEAN:PT},
with~$x_2\ge x_1+2$, and
\begin{equation}\label{ALKKA} \max_{i=1,2} 
|Q_*(x_i)-\zeta|\le\rho,\end{equation}
for some~$\zeta\in\Z^n$.

Then
\begin{equation}
\label{STIMA:AA1} E_{(x_1,x_2)}+
\int_{x_1}^{x_2} a(x)\,W(Q_*(x))\,dx\le \diamondsuit
,\end{equation}
with~$\diamondsuit$ as small as we wish if~$\rho$ is suitably small
(the smallness of~$\rho$
depends on~$n$, $s_0$, $M$
and
the structural constants
of the kernel and the potential).

Moreover,
\begin{equation}
\label{STIMA:AA2}
{\mbox{$|Q_*(x)-\zeta|\le r/2$ for every~$x\in [x_1,x_2]$.}}\end{equation}
\end{proposition}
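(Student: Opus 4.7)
The plan is to establish \eqref{STIMA:AA1} by a competitor argument that exploits the minimality of $Q_*$ combined with the glueing machinery of Section~\ref{Se2}, and then to deduce \eqref{STIMA:AA2} by a contradiction based on the uniform H\"older regularity~\eqref{ORA}. For \eqref{STIMA:AA1}, I would construct an admissible competitor $V \in \Gamma(\vec\zeta, \vec b)$ that coincides with $Q_*$ outside $[x_1, x_2]$, equals $\zeta$ on $[x_1+1, x_2-1]$, and linearly interpolates between $Q_*(x_i)$ and $\zeta$ on the two unit-length intervals $[x_1, x_1+1]$ and $[x_2-1, x_2]$. By Definitions~\ref{DEF:CLEAN}--\ref{DEF:CLEAN:PT} together with \eqref{ALKKA}, $Q_*$ is $\rho$-close to $\zeta$ on intervals of length $|\log \rho|$ centered at $x_1, x_2$; for $\rho$ small, the interpolations therefore stay within $\rho$ of $\zeta$, and by discreteness of $\Z^n$ any constraint window in $\Gamma(\vec\zeta, \vec b)$ intersecting $[x_1, x_2]$ must itself target $\zeta$, ensuring admissibility. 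The bulk of the bound then comes from the glueing inequality \eqref{89ioKKKAIIJHH} of Remark~\ref{DIAMOND}: Lemma~\ref{9jkdJJKA} applied on the clean intervals shows that $Q_*$ has Lipschitz seminorm of order $\diamondsuit$ there, so
$$ E(V) \le E_{(-\infty, x_1)}(Q_*) + E_{(x_1,x_2)}(V) + E_{(x_2, +\infty)}(Q_*) + \diamondsuit, $$
where $E_{(x_1,x_2)}(V) = \diamondsuit$ (only the two short interpolations contribute, with slopes $O(\rho)$), and $\int_{x_1}^{x_2} a(x) W(V(x))\,dx \le C\rho^2 = \diamondsuit$ (use $W(\zeta) = 0$ from \eqref{ZERI di W} together with the quadratic growth \eqref{GROW}). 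Combining with the minimality $I(Q_*) \le I(V)$, the subadditivity $E(Q_*) \ge E_{(-\infty, x_1)}(Q_*) + E_{(x_1, x_2)}(Q_*) + E_{(x_2, +\infty)}(Q_*)$ (cross terms being nonnegative), and cancelling the contributions of $Q_*$ outside $[x_1, x_2]$, I would reach \eqref{STIMA:AA1}.

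For \eqref{STIMA:AA2}, assume by contradiction that $|Q_*(x_0) - \zeta| > r/2$ at some $x_0 \in [x_1, x_2]$. By the uniform H\"older estimate \eqref{ORA}, there exists $\ell > 0$ depending only on $r$ and the structural constants such that $|Q_*(x) - \zeta| \ge r/4$ on $[x_0 - \ell, x_0 + \ell]$. Since $r \le 1/4$, no other integer is within distance $r/4$ of $\zeta$, hence $\mathrm{dist}(Q_*(x), \Z^n) \ge r/4$ on this interval, which gives $W(Q_*(x)) \ge w_0 > 0$ by periodicity and \eqref{ZERI di W}, where $w_0 := \inf_{\mathrm{dist}(\tau, \Z^n) \ge r/4} W(\tau)$. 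A further application of \eqref{ORA} with the smallness $|Q_*(x_i) - \zeta| \le \rho$ (much smaller than $r/2$) forces $x_0$ to lie at a fixed positive distance from $x_1, x_2$, so $[x_0 - \ell, x_0 + \ell] \subseteq [x_1, x_2]$ for $\rho$ small; then
$$ \int_{x_1}^{x_2} a(x) W(Q_*(x))\,dx \ge 2\ell\, \underline{a}\, w_0, $$
a positive constant independent of $\rho$, contradicting \eqref{STIMA:AA1} as soon as $\rho$ is chosen small enough.

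The main technical obstacle is the verification of admissibility of the competitor $V$ in the regime where $[x_1, x_2]$ overlaps one or more constraint windows $[b_{2i-2}, b_{2i-1}]$ whose target integers might a priori differ from $\zeta$: this requires combining the clean-point hypothesis \eqref{ALKKA} with the structure of $\Gamma(\vec\zeta, \vec b)$ to identify all such targets with $\zeta$. A secondary difficulty is to iterate the glueing bound of Proposition~\ref{GLUE} at both clean points $x_1$ and $x_2$ so that each application contributes only a $\diamondsuit$ remainder; this relies crucially on the logarithmic horizontal scaling built into Definition~\ref{DEF:CLEAN}, which is exactly what makes the right hand side of \eqref{97iodf89789yfghjkHHHHHJ} vanish as $\rho \to 0$.
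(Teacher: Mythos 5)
Your proof of \eqref{STIMA:AA1} is essentially the paper's argument: the same competitor (linear interpolations at $x_1,\,x_2$, constant $\zeta$ in the middle, $Q_*$ outside), the same use of the glueing machinery of Remark~\ref{DIAMOND} at the clean points to make the correction terms $\diamondsuit$-small, the same bound on the potential term via~\eqref{GROW}, and the same minimality comparison. (You also raise admissibility of the competitor, which the paper leaves implicit; the discreteness argument you give only identifies the target integer of a constraint window that actually contains $x_1$ or $x_2$, not of one lying entirely in the interior of $(x_1,x_2)$, but in the concrete applications of the proposition this is never an issue, and the paper does not elaborate on it either.)

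The proof of \eqref{STIMA:AA2}, however, has a genuine gap. {F}rom $|Q_*(x_0)-\zeta|>r/2$ and the H\"older bound~\eqref{ORA} you get $|Q_*(x)-\zeta|\ge r/4$ on $[x_0-\ell,x_0+\ell]$, and then you assert ${\rm dist}(Q_*(x),\Z^n)\ge r/4$. This does not follow: being far from $\zeta$ does not mean being far from every lattice point. Since you only assume $|Q_*(x_0)-\zeta|>r/2$ (with no upper bound), the orbit could lie very close to a neighbouring integer $\zeta'\ne\zeta$ throughout $[x_0-\ell,x_0+\ell]$, in which case $W(Q_*(x))$ would be small and your lower bound on the potential integral would fail. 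The fix -- which is what the paper does -- is an intermediate value step: because $|Q_*(x_1)-\zeta|\le\rho< r/2<|Q_*(\tilde x)-\zeta|$ and $Q_*$ is continuous by~\eqref{ASR3} and Lemma~\ref{SOB}, there is $\hat x\in[x_1,x_2]$ with $|Q_*(\hat x)-\zeta|=r/2$ exactly. The H\"older estimate then forces $Q_*(x)\in\overline{B_{3r/4}(\zeta)\setminus B_{r/4}(\zeta)}$ for $x$ in a fixed small interval around $\hat x$. Only now does $r\le 1/4$ enter usefully: this annulus around $\zeta$ is at distance at least $r/4$ from all of $\Z^n$, since any other integer is at distance at least $1-3r/4>r/4$. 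That is the inequality you need to conclude $W(Q_*(x))\ge w_0>0$ there and reach the contradiction with~\eqref{STIMA:AA1}.
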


\begin{proof} We define
$$ P(x):= \left\{
\begin{matrix}
Q_*(x) & {\mbox{ if }} x\in (-\infty, x_1),\\
Q_*(x_1)(x_1+1-x)+\zeta(x-x_1) & {\mbox{ if }} x\in [x_1,x_1+1],\\
\zeta & {\mbox{ if }} x\in [x_1+1, x_2-1),\\
Q_*(x_2)(x-x_2+1)+\zeta(x_2-x) & {\mbox{ if }} x\in [x_2-1,x_2],\\
Q_*(x)& {\mbox{ if }} x\in (x_2,+\infty).
\end{matrix}
\right. $$
We observe that, if~$x\in(x_1,x_2)$, then
\begin{equation}\label{89iok66670a0}
\begin{split}
&|P(x)-\zeta|\\
\le\;& \sup_{y\in(x_1,x_1+1)} |Q(x_1)(x_1+1-y)
+\zeta(y-x_1)
-\zeta|+
\sup_{y\in(x_2-1,x_2)} |Q(x_2)(y-x_2-1)
+\zeta(x_2-y)
-\zeta|\\ \le\;&
|Q(x_1)-\zeta|+|Q(x_2)-\zeta|\le 2\rho.\end{split}\end{equation}
We use~\eqref{DIAMOND:EQ} and we obtain that
\begin{equation}\label{97yihokjb6666666:000}
E(P)\le E_{(-\infty,x_1)}(Q_*)+E_{(x_2,+\infty)}(Q_*)
+\diamondsuit\le E(Q_*)-E_{(x_1,x_2)}(Q_*)
+\diamondsuit.\end{equation}
In addition, by~\eqref{GROW} and~\eqref{89iok66670a0}, if~$x\in (x_1,x_2)$
then~$W(P(x))\le 4C_0\rho^2$. Using this and the fact that~$W(P(x))=W(\zeta)=0$
if~$x\in(x_1+1,x_2-1)$, we conclude that
$$ \int_{x_1}^{x_2} W(P(x))\,dx=
\int_{x_1}^{x_1+1} W(P(x))\,dx+\int_{x_2-1}^{x_2} W(P(x))\,dx
\le 8C_0\,\rho^2.$$
Thus, by the minimality of~$Q_*$ and~\eqref{97yihokjb6666666:000},
\begin{eqnarray*}
0&\le&I(P)-I(Q_*)\\ &\le& 
-E_{(x_1,x_2)}(Q_*)-\int_{x_1}^{x_2} a(x)\,W(Q_*(x))\,dx+\diamondsuit,
\end{eqnarray*}
which proves~\eqref{STIMA:AA1}.

Now we prove~\eqref{STIMA:AA2}.
For this,
we assume by contradiction
that there exists~$\tilde x\in[x_1,x_2]$
such that~$|Q_*(\tilde x)-\zeta|> r/2$. 

Since~$Q_*$ is continuous, due to~\eqref{ASR3} and
Lemma~\ref{SOB}, and~$|Q_*(x_1)-\zeta|\le\rho <r/2$,
we obtain that there exists~$\hat x\in[x_1,x_2]$
such that
\begin{equation}\label{9asdwAA}
|Q(\hat x)-\zeta|= \frac{r}2.\end{equation}
More precisely, by~\eqref{ORA},
we know that~$\|Q_* -\zeta_1\|_{C^{0,s-\frac12}(\R)}$
is bounded by a constant~$C_1>1$,
possibly depending on~$n$, $M$
and
the structural constants 
of the kernel and the potential.
In particular, if we define
$$ c_1:= \min\left\{\frac1{10},\;
\left( \frac{r}{4C_1}\right)^{\frac{2}{2s-1}}\right\},$$
we conclude that, for any~$x\in [ \hat x -c_1 , \hat x + c_1]$,
$$ |Q_*(x)-Q_*(\hat x)|\le C_1\,|x-\hat x|^{s-\frac12}\le \frac{r}{4}.$$
This and~\eqref{9asdwAA}
imply that
$$ Q_*(x) \in \overline{ B_{3r/4} (\zeta)\setminus B_{r/4}(\zeta)}$$
and thus
$$ {\rm dist}\,\big(Q_*(x),\Z^n\big)\ge \frac{r}{4},$$
for all~$x\in [ \hat x -c_1, \hat x + c_1]$.
This,~\eqref{ZERI di W} and~\eqref{GROW:2} give that
\begin{equation*}
\int_{\hat x -c_1}^{\hat x + c_1} a(x) W(Q_*(x))\,dx\ge \underline{a}
\int_{\hat x -c_1}^{\hat x + c_1} W(Q_*(x))\,dx \ge
2c_1\,\underline{a} \,
\inf_{ {\rm dist}\, (\tau,\; \Z^n) \ge r/4 } W(\tau)=:c_2.\end{equation*}
Hence, noticing that~$(\hat x-c_1,\hat x+c_1)\subseteq (x_1,x_2)$,
we obtain that
$$ \int_{x_1}^{x_2} a(x) W(Q_*(x))\,dx\ge c_2,$$
and this is in contradiction with~\eqref{STIMA:AA1}
for small~$\rho$. Then, the proof
of~\eqref{STIMA:AA2} is now complete.
\end{proof}

\section{Heteroclinic orbits}\label{JAH:SS1}

Goal of this section is to construct solutions
that emanate from a fixed~$\zeta_1\in\Z^n$ as~$x\to-\infty$
and approach a suitable~$\zeta_2\in\Z^n\setminus\{\zeta_1\}$ as~$x\to+\infty$.
Roughly speaking, this~$\zeta_2$ is chosen to
minimize all the possible energies of the trajectories
connecting two integer points, under the pointwise
constraints considered in Section~\ref{MI99}.

More precisely, fixed~$\zeta_1\ne\zeta_2\in\Z^n$
we consider the minimizer~$Q_*=Q_*^{\zeta_1,\zeta_2}$
as given by Lemma~\ref{HJA:AA}. 

Let
\begin{equation}\label{89JKA:99}
I_{\zeta_1}:=\inf_{\zeta_2\in\Z^n\setminus\{\zeta_1\}} I(Q_*^{\zeta_1,\zeta_2}).\end{equation}
By Lemma~\ref{LARGE:EN} we know that if~$|\zeta_2-\zeta_1|$
is very large, the energy also gets large, therefore
only a finite number of integer points~$\zeta_2$ 
take part to the minimization procedure in~\eqref{89JKA:99}.
Accordingly we can write
\begin{equation}\label{89JKA:99:MI}
I_{\zeta_1}=\min_{\zeta_2\in\Z^n\setminus\{\zeta_1\}} I(Q_*^{\zeta_1,\zeta_2})\end{equation}
and define~${\mathcal{A}}(\zeta_1)$ the family of
all~$\zeta_2
\in\Z^n$ attaining such minimum.\medskip

By construction, ${\mathcal{A}}(\zeta_1)\ne\varnothing$
and contains at most a finite number of elements.
It is interesting to notice that in the case of even potentials~$
{\mathcal{A}}(\zeta_1)$ contains at least two elements:

\begin{lemma}\label{OSS-KA-L1}
Assume that~$W(-\tau)=W(\tau)$ for any~$\tau\in\R^n$. Then,
if~$\zeta_2\in{\mathcal{A}}(\zeta_1)$, also~$2\zeta_1-\zeta_2
\in{\mathcal{A}}(\zeta_1)$.
\end{lemma}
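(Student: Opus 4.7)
The plan is to exhibit a competitor for $\zeta_2':=2\zeta_1-\zeta_2$ built from $Q_*^{\zeta_1,\zeta_2}$ by a point reflection through $\zeta_1$, and to check that this reflection preserves both the admissibility and the energy. Concretely, I would set
$$\widetilde Q(x):=2\zeta_1-Q_*^{\zeta_1,\zeta_2}(x).$$
Note first that $\zeta_2'\ne\zeta_1$ (otherwise $\zeta_2=\zeta_1$, which is excluded), so $\zeta_2'$ is a legitimate candidate in the minimization defining $I_{\zeta_1}$ in~\eqref{89JKA:99:MI}.

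The first step is admissibility: since the map $\tau\mapsto 2\zeta_1-\tau$ is an isometry of $\R^n$ sending $\overline{B_r(\zeta_1)}$ onto itself and $\overline{B_r(\zeta_2)}$ onto $\overline{B_r(\zeta_2')}$, if $Q_*^{\zeta_1,\zeta_2}$ lies in $\Gamma\bigl((\zeta_1,\zeta_2),\vec b\bigr)$ then $\widetilde Q$ lies in $\Gamma\bigl((\zeta_1,\zeta_2'),\vec b\bigr)$, where $\vec b$ is the same array of constraint points (which, per the discussion around~\eqref{89uIJBAAJKAlAK}, depends on the modulating function $a$ but not on the endpoint integer $\zeta_2$). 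The second step is energy invariance. For the nonlocal part, the increments satisfy $\widetilde Q(x)-\widetilde Q(y)=-\bigl(Q_*^{\zeta_1,\zeta_2}(x)-Q_*^{\zeta_1,\zeta_2}(y)\bigr)$, so $E(\widetilde Q)=E(Q_*^{\zeta_1,\zeta_2})$ directly from the definition~\eqref{def E:Q}. For the potential part, using the $\Z^n$-periodicity of $W$ (with the translation $-2\zeta_1\in\Z^n$) and then the assumed evenness,
$$W(\widetilde Q(x))=W(2\zeta_1-Q_*^{\zeta_1,\zeta_2}(x))=W(-Q_*^{\zeta_1,\zeta_2}(x))=W(Q_*^{\zeta_1,\zeta_2}(x)),$$
so $\int_\R a(x)\,W(\widetilde Q(x))\,dx=\int_\R a(x)\,W(Q_*^{\zeta_1,\zeta_2}(x))\,dx$. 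Adding these two identities yields $I(\widetilde Q)=I(Q_*^{\zeta_1,\zeta_2})$.

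Combining with the minimality characterization~\eqref{ASR4} applied in the class $\Gamma\bigl((\zeta_1,\zeta_2'),\vec b\bigr)$, we get
$$I(Q_*^{\zeta_1,\zeta_2'})\le I(\widetilde Q)=I(Q_*^{\zeta_1,\zeta_2})=I_{\zeta_1}.$$
The reverse inequality $I(Q_*^{\zeta_1,\zeta_2})\le I(Q_*^{\zeta_1,\zeta_2'})$ is obtained by the same symmetry procedure applied to $Q_*^{\zeta_1,\zeta_2'}$ (observing that $2\zeta_1-\zeta_2'=\zeta_2$). Hence $I(Q_*^{\zeta_1,\zeta_2'})=I_{\zeta_1}$, which by definition means $\zeta_2'\in\mathcal{A}(\zeta_1)$.

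I do not anticipate any genuine obstacle here: the argument is a clean symmetry observation and the only points needing care are (i) verifying that both the periodicity of $W$ and its evenness are needed to make the identification $W(2\zeta_1-\tau)=W(\tau)$ work for an arbitrary $\zeta_1\in\Z^n$, and (ii) confirming that the constraint array $\vec b$ does not itself depend on the terminal integer, so that the reflected trajectory lies in the admissible class appropriate to the reflected endpoint. Once these are in place, the proof reduces to the three lines above.
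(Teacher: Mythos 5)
Your proof is correct and fills in exactly the argument the paper leaves implicit: the paper's entire proof is the one-line identity $W(2\zeta_1-Q(t))=W(-Q(t))=W(Q(t))$, which is precisely your reflection-through-$\zeta_1$ competitor $\widetilde Q=2\zeta_1-Q_*^{\zeta_1,\zeta_2}$, together with the (unstated in the paper) observations that $E$ and $\Gamma$ are invariant under this reflection. Same approach, just spelled out.
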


\begin{proof} We observe that
$$ W(2\zeta_1-Q(t))=W(-Q(t))=W(Q(t))$$
in this case, and so the desired claim follows.
\end{proof}

Our goal is now to show that when connecting~$\zeta_1$ to~$\zeta_2\in
{\mathcal{A}}(\zeta_1)$, the optimal trajectory does not get close to
other integer points.
This will be accomplished in the forthcoming Corollary~\ref{UELO:2}.
To this end, we give the following result:

\begin{lemma}\label{UELO}
Let~$s_0\in\left(\frac12,1\right)$ and~$s\in[s_0,1)$.
There exists~$\rho_*>0$,
possibly depending on~$n$, $s_0$ and
the structural constants
of the kernel and the potential,
such that if~$\rho\in(0,\rho_*]$
the following statement holds.

Let~$\tilde\zeta\in\Z^n$
and~$Q\in \Gamma(\zeta_1,\tilde\zeta,b_1,b_2)$.
Assume that there exist~$\zeta\in\Z^n\setminus\{\zeta_1,\tilde\zeta\}$
and a clean point~$x_*\in (b_1,b_2-1)$ for~$Q$ such that~$
Q(x_*)\in\overline{ B_\rho(\zeta)}$.

Assume also that~$Q\in C^{0,\alpha}(\R)$, for some~$\alpha\in(0,1)$, and that
\begin{equation}\label{LI:0988jhvhjakjhdfiwqo}
[Q]_{C^{0,1}(\left[x_*-\frac{|\log\rho|}2,\;x_*+\frac{|\log\rho|}2\right])}\le C\,\left(
\frac{1}{|\log\rho|^{2s}}+\rho
\right)
\end{equation}
for some~$C>0$.
Then there exists~$c>0$, 
depending on~$C$, $\alpha$, $n$ and
the structural constants
of the kernel and the potential, such that
$$ I(Q)\ge I( Q_*^{\zeta_1,\zeta_2} ) + c.$$
\end{lemma}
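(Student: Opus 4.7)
The plan is to use the clean point $x_*$ to split $Q$ into a ``left part'' and a ``right part'' which, after small modifications outside $x_*$, become admissible competitors in the constrained classes $\Gamma(\zeta_1,\zeta,\cdot,\cdot)$ and $\Gamma(\zeta,\tilde\zeta,\cdot,\cdot)$ respectively. Concretely, I would define $Q_L$ to coincide with $Q$ on $(-\infty,x_*]$, to equal $\zeta$ on $[x_*+1,+\infty)$, and to linearly interpolate on $[x_*,x_*+1]$ between $Q(x_*)$ and $\zeta$; the function $Q_R$ is defined symmetrically. Since $|Q(x_*)-\zeta|\le\rho\le r$, the interpolation stays in $\overline{B_r(\zeta)}$, so $Q_L$ and $Q_R$ are admissible competitors in $\Gamma(\zeta_1,\zeta,b_1,b_2')$ and $\Gamma(\zeta,\tilde\zeta,b_1'',b_2)$ for appropriately chosen endpoints placed far enough from $x_*$ to respect the spacing requirement $b_{i+1}\ge b_i+3$ in~\eqref{08453647595436} (which is easy because both competitors are already identically equal to $\zeta$ past $x_*\pm 1$).

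The key step is the glueing estimate. Applying Proposition~\ref{GLUE} at $x_*$ via the ``$\diamondsuit$''-bookkeeping of Remark~\ref{DIAMOND}, and exploiting the Lipschitz bound~\eqref{LI:0988jhvhjakjhdfiwqo} on a neighborhood of size $\beta\sim|\log\rho|$ of the clean point $x_*$, I would obtain the inequalities
\[ E(Q_L)\le E_{(-\infty,x_*)}(Q)+\diamondsuit \quad\text{and}\quad E(Q_R)\le E_{(x_*,+\infty)}(Q)+\diamondsuit, \]
with $\diamondsuit\to 0^+$ as $\rho\to 0^+$. The potential terms are handled by~\eqref{GROW}: the interpolation contributes at most $C\rho^2$, so $\int a\,W(Q_L)\le \int_{-\infty}^{x_*}a\,W(Q)+C\rho^2$ and similarly for $Q_R$. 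Summing these bounds and using the trivial positivity of the cross interaction $E(Q)\ge E_{(-\infty,x_*)}(Q)+E_{(x_*,+\infty)}(Q)$ yields
\[ I(Q_L)+I(Q_R)\le I(Q)+\diamondsuit. \]

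To conclude, I use both of the restrictions $\zeta\ne\zeta_1$ and $\zeta\ne\tilde\zeta$. Since $\zeta\ne\zeta_1$, the competitor $Q_L$ is admissible in the class minimized by $I_{\zeta_1}$, so by~\eqref{89JKA:99:MI} we have $I(Q_L)\ge I(Q_*^{\zeta_1,\zeta})\ge I_{\zeta_1}=I(Q_*^{\zeta_1,\zeta_2})$. Since $\zeta\ne\tilde\zeta$, the competitor $Q_R$ exhibits an excursion of size at least $1$ in some component between distinct integers, hence Lemma~\ref{LARGE:EN} applied with $\nu=1$ delivers a universal lower bound $I(Q_R)\ge\bar c>0$ that is independent of $\zeta$ and $\tilde\zeta$. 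Putting everything together yields $I(Q)\ge I(Q_*^{\zeta_1,\zeta_2})+\bar c-\diamondsuit$, and choosing $\rho$ small enough that $\diamondsuit\le\bar c/2$ gives the claim with $c:=\bar c/2$.

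The main obstacle is ensuring that the glueing cost $\diamondsuit$ is genuinely small, uniformly over the finitely many relevant $\zeta$ and $\tilde\zeta$ (the finiteness being guaranteed by Lemma~\ref{LARGE:EN}). This is exactly the role of hypothesis~\eqref{LI:0988jhvhjakjhdfiwqo}: the logarithmic scaling built into Definition~\ref{DEF:CLEAN} together with the Lipschitz bound ensures, via~\eqref{89ioKKKAIIJHH}, that the cross-interaction terms in Proposition~\ref{GLUE} (coming ``from far away'') are controlled by $\rho^2\beta^{3-2s}+\beta^{-(2s-1)}$, both of which vanish as $\rho\to 0^+$ for $\beta\sim|\log\rho|$.
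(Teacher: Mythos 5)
Your overall strategy is correct and it is genuinely different in its bookkeeping from the paper's. The paper constructs only one competitor $P$ (your $Q_L$), uses minimality to get $I(P)\ge I(Q_*^{\zeta_1,\zeta_2})$, and then bounds $I(P)-I(Q)$ from above by observing that since $\zeta\ne\tilde\zeta$ and $Q(b_2)\in\overline{B_r(\tilde\zeta)}$, the orbit must pass through a region where $W(Q)$ is uniformly bounded below on an interval of uniform length; hence $-\int_{x_*}^{\infty}a\,W(Q)\le -\tilde c$ while $\int_{x_*}^{\infty}a\,W(P)$ is $O(\rho^2)$. Your version splits $I(Q)$ into $I(Q_L)+I(Q_R)-\diamondsuit$ and handles each piece separately, replacing the direct potential estimate on the right excursion by an appeal to Lemma~\ref{LARGE:EN}. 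This is a legitimate and in some sense cleaner modular route: the left comparison is identical, and the right piece is delegated to a previously established lemma.

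There is, however, one point that needs a patch. You invoke Lemma~\ref{LARGE:EN} for $Q_R$ with $\nu=1$, asserting that $Q_R$ ``exhibits an excursion of size at least $1$ in some component between distinct integers.'' The hypothesis of that lemma is that $\sup_x|Q_{R,i}(x)-\zeta_i|\ge\nu$ for some integer $\nu\ge1$, where $\zeta$ is the integer that $Q_R$ approaches as $x\to-\infty$. But the orbit $Q$ in Lemma~\ref{UELO} is an arbitrary element of $\Gamma(\zeta_1,\tilde\zeta,b_1,b_2)$, not a minimizer, so it need not converge to $\tilde\zeta$; the constraint only gives $Q(x)\in\overline{B_r(\tilde\zeta)}$ for $x\ge b_2$, and hence $\sup_x|Q_{R,i}(x)-\zeta_i|\ge 1-r\ge 3/4$, which falls short of $1$. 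This is not a fatal flaw: the proof of Lemma~\ref{FUrbJAK} only requires the component curve to cross the half-integer level $\zeta_i+\tfrac12$, and $3/4>\tfrac12$ makes that still happen, so re-running that argument gives the universal lower bound you want. Alternatively, and more in line with what the paper actually does, one can sidestep Lemma~\ref{LARGE:EN} altogether and argue directly: by the intermediate value theorem applied to the $i$-th component, there is a point $y_*\in[x_*,b_2]$ with ${\rm dist}(Q(y_*),\Z^n)\ge 1/4$, and the H\"older continuity hypothesis on $Q$ then produces an interval of uniform length on which $W(Q)$ stays bounded below, hence a uniform contribution to $\int a\,W(Q_R)$. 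Either fix closes the gap; I would just not quote Lemma~\ref{LARGE:EN} verbatim with $\nu=1$ without noting this adjustment.
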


\begin{proof} 
We define
$$ P(x):=\left\{
\begin{matrix}
Q(x) & {\mbox{ if }} x\le x_*,
\\ Q(x_*)(x_*+1-x)+
\zeta(x-x_*) & {\mbox{ if }} x\in(x_*,x_*+1),\\
\zeta & {\mbox{ if }} x>x_*+1.
\end{matrix}
\right.$$
By construction~$P\in\Gamma(\zeta_1,\zeta,b_1,b_2)$ and~$\zeta\ne\zeta_1$, 
therefore, 
using the minimality of~$Q_*^{\zeta_1,\zeta_2}$, 
\begin{equation}\label{9ujkdYYYIA890654}
I(Q_*^{\zeta_1,\zeta_2})\le I(P).
\end{equation}
On the other hand, using~\eqref{DIAMOND:EQ}, we see that
\begin{equation}\label{98fjJJJJ}
I(P)-I(Q)\le\int_{x_*}^{+\infty} a(x)\,
\Big[ W(P(x))-W(Q(x))\Big]\,dx+\diamondsuit.\end{equation}
Now we use that~$\zeta\ne\tilde\zeta$
and that~$Q(b_2)\in\overline{B_r(\tilde\zeta)}$
to find~$y_*\in [x_*,b_2]$ such that~$Q(y_*)$
stays at distance~$1/4$ from~$\Z^n$. 
Then, by the continuity assumption on~$Q$, 
we find an interval of the form~$[y_*,y_*+\ell']$
such that~$Q(x)$
stays at distance at least~$1/8$ from~$\Z^n$
for all~$x\in [y_*,y_*+\ell']$. Accordingly
$$ \int_{x_*}^{+\infty} a(x)\,W(Q(x)) \,dx\ge
\underline{a}\,\int_{y_*}^{y_*+\ell'} W(Q(x)) \,dx
\ge \underline{a}\,\ell'
\,\inf_{{\rm dist}\,(\tau,\Z^n)\ge1/8} W(\tau)=:\tilde c.$$
Plugging this into~\eqref{98fjJJJJ} and using the definition of~$P$,
we obtain
$$ I(P)-I(Q)\le
\diamondsuit-\tilde c.$$
Thus, we choose~$\rho$ small enough (which gives~$\diamondsuit$
small enough)
and we find 
$$ I(P)-I(Q)\le
-\frac{\tilde c}{2}.$$
This and~\eqref{9ujkdYYYIA890654}
imply the desired result.
\end{proof}

As a consequence of Lemma~\ref{UELO} we obtain:

\begin{corollary}\label{UELO:2}
Let~$s_0\in\left(\frac12,1\right)$ and~$s\in[s_0,1)$.
There exists~$\rho_*>0$,
possibly depending on~$n$ and
the structural constants
of the kernel and the potential,
such that if~$\rho\in(0,\rho_*]$
the following statement holds.

Let~$\zeta_1\in\Z^n$ and~$\zeta_2\in{\mathcal{A}}(\zeta_1)$.
Assume that there exist~$\zeta\in\Z^n$
and a clean point~$x_*\in (b_1,b_2-1)$ such that~$Q_*^{\zeta_1,\zeta_2}(x_*)\in\overline{ B_\rho(\zeta)}$.

Then~$\zeta\in\{\zeta_1,\zeta_2\}$.
\end{corollary}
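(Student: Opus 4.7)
The plan is to argue by contradiction: suppose~$\zeta\notin\{\zeta_1,\zeta_2\}$, and deduce~$I(Q_*^{\zeta_1,\zeta_2})\ge I(Q_*^{\zeta_1,\zeta_2})+c$ by invoking Lemma~\ref{UELO} with~$Q:=Q_*^{\zeta_1,\zeta_2}$ and~$\tilde\zeta:=\zeta_2$. This choice is legitimate because~$Q_*^{\zeta_1,\zeta_2}\in\Gamma(\zeta_1,\zeta_2,b_1,b_2)$ by construction, and because~$\zeta\in\Z^n\setminus\{\zeta_1,\zeta_2\}$ under our contradiction hypothesis. The conclusion of Lemma~\ref{UELO} is~$I(Q_*^{\zeta_1,\zeta_2})\ge I(Q_*^{\zeta_1,\zeta_2})+c$, which is absurd; here the right-hand side refers to the common minimum value~$I_{\zeta_1}$ attained by any element of~$\mathcal{A}(\zeta_1)$, so the inequality really compares the same number to itself plus a positive constant.

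To close the loop, I need to verify the four hypotheses of Lemma~\ref{UELO} for~$Q=Q_*^{\zeta_1,\zeta_2}$. Three of them are immediate: membership in~$\Gamma(\zeta_1,\zeta_2,b_1,b_2)$ is by construction; H\"older continuity of~$Q_*^{\zeta_1,\zeta_2}$, with exponent~$\alpha:=s-\tfrac{1}{2}$, follows from~\eqref{ORA}; and the pointwise assumption~$Q_*^{\zeta_1,\zeta_2}(x_*)\in\overline{B_\rho(\zeta)}$ together with the clean-point assumption on~$x_*\in(b_1,b_2-1)$ are the hypotheses of the corollary itself.

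The only substantive point, and the main obstacle, is checking the interior Lipschitz estimate~\eqref{LI:0988jhvhjakjhdfiwqo}. The argument mirrors the computation carried out in Remark~\ref{DIAMOND}. By Definitions~\ref{DEF:CLEAN}--\ref{DEF:CLEAN:PT}, the point~$x_*$ sits at the center of a clean interval~$J$ with~$|J|\ge|\log\rho|$ on which~$Q_*^{\zeta_1,\zeta_2}$ is within~$\rho$ of some integer~$\zeta'\in\Z^n$; combined with~$|Q_*^{\zeta_1,\zeta_2}(x_*)-\zeta|\le\rho$, taking~$\rho$ small enough (say~$\rho<\tfrac{1}{4}$) forces~$\zeta'=\zeta$. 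Since~$x_*\in(b_1,b_2-1)\subseteq F$, Lemma~\ref{HJA:AA:2} tells us that~$Q_*^{\zeta_1,\zeta_2}$ solves~\eqref{EQUAZ} in a neighborhood of~$x_*$, so Lemma~\ref{9jkdJJKA} applies with~$\zeta$ as the target integer and~$T$ of the order of~$|\log\rho|$ (using~\eqref{ASR1} and~\eqref{ASR2} for the uniform bound~$M_o$). Adjusting constants appropriately, the output is precisely a bound of the form
\[
[Q_*^{\zeta_1,\zeta_2}]_{C^{0,1}\bigl(\bigl[x_*-\tfrac{|\log\rho|}{2},\,x_*+\tfrac{|\log\rho|}{2}\bigr]\bigr)}\le C\Bigl(\tfrac{1}{|\log\rho|^{2s}}+\rho\Bigr),
\]
which is exactly~\eqref{LI:0988jhvhjakjhdfiwqo}. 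The only technical care needed is to organize the scales so that the interval on which one applies Lemma~\ref{9jkdJJKA} fits inside the clean interval~$J$ and the resulting regularity window covers~$[x_*-\tfrac{|\log\rho|}{2},x_*+\tfrac{|\log\rho|}{2}]$; choosing~$\rho_*$ small enough for this to happen (uniformly in~$s\in[s_0,1)$) is exactly the role of the constant~$\rho_*$ in the statement. Once the four hypotheses of Lemma~\ref{UELO} are verified, the contradiction is immediate and the corollary is proved.
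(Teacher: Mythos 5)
Your proposal is correct and follows the same route as the paper: assume by contradiction that $\zeta\notin\{\zeta_1,\zeta_2\}$, apply Lemma~\ref{UELO} with $Q:=Q_*^{\zeta_1,\zeta_2}$ and $\tilde\zeta:=\zeta_2$, and obtain the absurdity $I(Q_*^{\zeta_1,\zeta_2})\ge I(Q_*^{\zeta_1,\zeta_2})+c$. Your verification of the Lipschitz hypothesis~\eqref{LI:0988jhvhjakjhdfiwqo} via Lemma~\ref{9jkdJJKA} is exactly what the paper invokes (through~\eqref{098idkscf7gihoj777} and~\eqref{098idkscf7gihoj777:BIS}), so the arguments match.
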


\begin{proof}
Suppose by contradiction that~$\zeta\not\in\{\zeta_1,\zeta_2\}$.
Then~$Q_*^{\zeta_1,\zeta_2}$ satisfies the assumptions
of Lemma~\ref{UELO} with~$\tilde\zeta:=\zeta_2$
(recall~\eqref{ORA}
in order to fulfill the continuity condition in Lemma~\ref{UELO}, 
and also~\eqref{098idkscf7gihoj777} and~\eqref{098idkscf7gihoj777:BIS}
in order to fulfill the Lipschitz condition in~\eqref{LI:0988jhvhjakjhdfiwqo}).
Hence, using Lemma~\ref{UELO} with~$Q:=Q_*^{\zeta_1,\zeta_2}$,
we obtain that~$ I( Q_*^{\zeta_1,\zeta_2})
\ge I( Q_*^{\zeta_1,\zeta_2} ) + c$, with~$c>0$,
which is an obvious
contradiction.
\end{proof}

Now we are in the position of establishing the existence
of heteroclinic orbits connecting~$\zeta_1\in\Z^n$ and~$\zeta_2\in
{\mathcal{A}}(\zeta_1)$.

\begin{theorem}\label{HET}
Let~$s_0\in\left(\frac12,1\right)$ and~$s\in[s_0,1)$.
Assume that~\eqref{ASS:A:NONDEG} holds.

There exist $\epsilon_*>0$
and~$b_2>b_1\in\R$,
possibly depending on~$n$, $s_0$ and
the structural constants 
of the kernel and the potential, 
such that if~$\epsilon\in(0,\epsilon_*]$,
the following statement holds.

Let~$\zeta_1\in\Z^n$ and~$\zeta_2\in{\mathcal{A}}(\zeta_1)$.

Then~$Q_*^{\zeta_1,\zeta_2}$ is a solution of~\eqref{EQUAZ}.
\end{theorem}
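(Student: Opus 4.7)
The plan is to establish that the constrained minimizer $Q:=Q_*^{\zeta_1,\zeta_2}$ given by Lemma~\ref{HJA:AA} is in fact a \emph{free} minimizer of the energy \eqref{ENE}, in the sense that the inequalities defining $\Gamma(\zeta_1,\zeta_2;b_1,b_2)$ in \eqref{08453647595436} hold strictly: $|Q(x)-\zeta_1|<r$ for every $x\le b_1$ and $|Q(x)-\zeta_2|<r$ for every $x\ge b_2$. Once this is shown, every small perturbation $Q+t\psi$ with $\psi\in C_0^\infty(\R,\R^n)$ remains in $\Gamma$, so the same inner-variation argument underlying Lemma~\ref{HJA:AA:2} yields the Euler-Lagrange equation \eqref{EQUAZ} on all of $\R$.

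Concerning the parameters, I would pick $b_1<b_2$ very close to two well-separated maxima of the modulation $a(x)=a_1+a_2\cos(\epsilon x)$, with $b_2-b_1$ essentially a large integer multiple of the period $2\pi/\epsilon$ (the precise choice is fixed later in the paper). The crucial feature is that inside $(b_1,b_2)$ there is a subinterval of length comparable to $2\pi/\epsilon$ on which $a(x)\le a(b_1)-c\,a_2$ for some absolute constant $c>0$. Now suppose for contradiction that the left constraint is saturated: $|Q(\bar x)-\zeta_1|=r$ for some $\bar x\le b_1$. Fix $\rho>0$ small. By \eqref{ASR5} and Lemma~\ref{CLEAN:LEMMA}, I pick a clean point $y^-\ll\bar x$ for $(\rho,Q)$ with clean interval of length much larger than $2\pi/\epsilon$, so that (using Proposition~\ref{STICA888} between adjacent clean points) $Q$ is $\rho$-close to $\zeta_1$ on $[y^--\tau,y^-]$ for the $\tau$ introduced below; by \eqref{ASR5} and Corollary~\ref{UELO:2}, I pick a clean point $y^+\gg b_2$ with $|Q(y^+)-\zeta_2|\le\rho$ on a long clean interval, so $Q$ is $\rho$-close to $\zeta_2$ on $[y^+-\tau,y^++1]$.

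I then introduce the competitor
\begin{equation*}
\tilde Q(x):=\begin{cases}
\zeta_1 & x\le y^--1,\\
\zeta_1(y^--x)+Q(y^--\tau)(x-y^-+1) & y^--1<x<y^-,\\
Q(x-\tau) & y^-\le x\le y^+,\\
Q(y^+-\tau)(y^++1-x)+\zeta_2(x-y^+) & y^+<x<y^++1,\\
\zeta_2 & x\ge y^++1,
\end{cases}
\end{equation*}
where $\tau\in(0,2\pi/\epsilon)$ is chosen so that the translated support of $\{W(Q(\cdot))>0\}$ (which, since the left barrier is saturated near $\bar x$, intersects a neighborhood of $\bar x$) falls entirely inside the subinterval of $(b_1,b_2)$ where $a\le a(b_1)-c\,a_2$. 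By the choice of the clean points and their long clean intervals, $\tilde Q\in\Gamma(\zeta_1,\zeta_2;b_1,b_2)$. Iterated use of Proposition~\ref{GLUE} at the glueing points $y^\pm$, combined with the Lipschitz estimates of Lemma~\ref{9jkdJJKA} available at clean points (as packaged in \eqref{DIAMOND:EQ}), yields $E(\tilde Q)\le E(Q)+\diamondsuit$ with $\diamondsuit\to 0$ as $\rho\to 0$. Meanwhile, the change of variables $y=x-\tau$ and the bound $W\le C_0(2\rho)^2$ on the interpolation intervals (cf.\ \eqref{89iok66670a0}) give
\begin{equation*}
\int_\R a(x)\,W(\tilde Q(x))\,dx\le \int_\R a(y+\tau)\,W(Q(y))\,dy+\diamondsuit\le \int_\R a(x)\,W(Q(x))\,dx-c_*,
\end{equation*}
with $c_*>0$ uniform in $\rho$: its positivity follows from the placement of $\tau$ and the fact that $\{W(Q)\ge\text{const}>0\}$ has a measure bounded below uniformly (a consequence of the unit-size jump $|\zeta_2-\zeta_1|\ge 1$, the $C^{0,s-1/2}$ regularity \eqref{ORA}, and Lemma~\ref{LARGE:EN}). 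Choosing $\rho$ so small that $\diamondsuit<c_*/2$ gives $I(\tilde Q)<I(Q)$, contradicting the minimality of $Q$. The symmetric construction at $b_2$ rules out saturation of the right constraint.

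The main technical obstacle is making the modulation gain $c_*$ strictly dominate the nonlocal glueing errors $\diamondsuit$. The latter, controlled via \eqref{97iodf89789yfghjkHHHHHJ} with $\beta\sim|\log\rho|$, scale as $\rho^2|\log\rho|^{3-2s}+|\log\rho|^{-(2s-1)}$ and vanish with $\rho$; what requires genuine work is proving $c_*$ uniform in $\rho$. This forces one to quantify the transition width of $Q$ from below (via the uniform $C^{0,1}$ bound of Lemma~\ref{9jkdJJKA} together with the fact that $Q$ must move by at least $1$ between some clean points) and to verify that $W(Q)$ is bounded below on a definite fraction of that width. The smallness assumption on $\epsilon$ enters precisely here: it ensures that the wavelength $2\pi/\epsilon$ of $a$ exceeds this transition width, so that a translation $\tau\in(0,2\pi/\epsilon)$ can be found placing the entire transition inside a low-$a$ valley, delivering the strict gain~$c_*$.
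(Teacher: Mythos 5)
Your strategy---ruling out saturation of the constraint by comparing with a translated trajectory and exploiting the modulation $a$---is the right one and agrees in spirit with the paper, but there is a genuine gap in the admissibility of your competitor, and a case analysis is missing.

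The gap is this: for $b_2\le x\le y^+$ you set $\tilde Q(x)=Q(x-\tau)$ and need $Q(x-\tau)\in\overline{B_r(\zeta_2)}$; but $x-\tau\ge b_2-\tau$, which lies inside the \emph{unconstrained} window $(b_1,b_2)$, where the definition of $\Gamma$ gives no information whatsoever about $Q$. Nothing in your argument establishes that $Q$ is already near $\zeta_2$ on $[b_2-\tau,b_2]$, and without that the translated middle piece may leave $\overline{B_r(\zeta_2)}$ after $b_2$, so $\tilde Q\notin\Gamma$. The phrase ``by the choice of the clean points and their long clean intervals'' does not address this, since your $y^\pm$ are far from $b_1,b_2$ and say nothing about the behavior of $Q$ on $[b_2-\tau,b_2]$. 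The paper resolves exactly this point by first locating a clean point $x_*\in(b_1+L,b_1+2L)$ for which $Q_*(x_*)\in\overline{B_\rho(\zeta)}$, invoking Corollary~\ref{UELO:2} to force $\zeta\in\{\zeta_1,\zeta_2\}$, and then splitting into two cases. If $\zeta=\zeta_1$, Proposition~\ref{STICA888} sticks $Q_*$ to $\zeta_1$ on $(-\infty,x_*]$ and immediately contradicts the assumed saturation at $\bar x\le b_1$---no translation or energy comparison is needed. If $\zeta=\zeta_2$, the same stickiness propagates $\zeta_2$-closeness to all $x\ge b_2-L$, which is precisely what makes the \emph{globally} translated orbit $P(x):=Q_*(x-L)$ admissible. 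Your proof supplies neither the dichotomy nor the stickiness step that underwrites admissibility.

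Two secondary issues. First, you assert a ``clean interval of length much larger than $2\pi/\epsilon$''; by Definition~\ref{DEF:CLEAN} clean intervals have length $|\log\rho|$, independent of $\epsilon$, and since $\epsilon$ is taken small \emph{relative to $\rho$} (cf.~\eqref{L:DEF:pre}) the period $2\pi/\epsilon$ is vastly larger than any clean interval, so this claim is false as stated; what you can actually do is chain clean points via Proposition~\ref{STICA888}, but that yields $r/2$-closeness, not $\rho$-closeness, between clean points. Second, and more a stylistic point, the paper's use of a global translation makes the interaction energy $E$ exactly invariant, so no glueing and no $\diamondsuit$-bookkeeping is required in the translation step; your competitor glues $Q(\cdot-\tau)$ to constants at $y^\pm$, which introduces unnecessary nonlocal error terms (though these errors are harmless, they obscure the argument without providing any benefit once admissibility is secured the way the paper does).
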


\begin{proof} By~\eqref{ASR2} and Lemma~\ref{LARGE:EN},
we know that~$I(Q_*^{\zeta_1,\zeta_2})$ is bounded by some quantity
(independently on the choice of~$b_1$ and~$b_2$).

We fix~$\rho\in(0,r)$, to be taken sufficiently small
and we define
$$ L:=\frac{\pi}{12\epsilon}.$$
We suppose that~$\epsilon$ is so small that
\begin{equation} \label{L:DEF:pre}
L\ge \frac{C_*\,|\log\rho|}{\rho^{\frac{4s}{2s-1}} },
\end{equation}
for a suitably large constant~$C_*>0$ (of course,
condition~\eqref{L:DEF:pre} is just a smallness condition on~$\epsilon$
and~$C_*>0$ is chosen so that \eqref{J:ASSU} is satisfied).
 
Let also
\begin{equation}\label{89uIJBAAJKAlAK}
b_1:=L\quad{\mbox{ and }}\quad b_2:=23L.\end{equation} 
By~\eqref{ASS:A:NONDEG}
we have, for any~$x\in [b_1-L,b_1+2L]$ (that is~$\epsilon x\in
\left[0,\frac{\pi}{4}\right]$),
\begin{equation}\label{A:EX1}
\begin{split}
&a(x)-a(x+L) = a_2\,
\left[ \cos(\epsilon x)-\cos\left(\epsilon x+\frac{\pi}{12}\right)\right]
\\ &\qquad =a_2\,
\left[ \left( 1-\cos\frac{\pi}{12}\right)
\cos(\epsilon x)+\sin\frac\pi{12}\,\sin(\epsilon x)\right]\ge
a_2\,\left( 1-\cos\frac{\pi}{12}\right)
\cos \frac{\pi}{4} =:\gamma,
\end{split}
\end{equation}
with~$\gamma>0$.

Also, for any~$x\in [b_2-2L, b_2+L]$ (i.e.~$x\in[21L,24L]$)
we define~$\tilde x:=\frac{2\pi}{\epsilon}-x\in [0,3L]=[b_1-L,b_1+2L]$,
and we use the~$\frac{2\pi}{\epsilon}$-periodicity
of~$a$, the fact that~$a$ is even and~\eqref{A:EX1}
to obtain
\begin{equation}\label{A:EX2}
a(x-L)-a(x) = a(-\tilde x-L) -a(-\tilde x)=
a(\tilde x+L) -a(\tilde x)\le -\gamma.
\end{equation}

Now, to prove Theorem~\ref{HET},
we want to show that~$Q_*^{\zeta_1,\zeta_2}$
does not touch the constraints of~$\Gamma(\zeta_1,\zeta_2,b_1,b_2)$,
as given in~\eqref{08453647595436} (then the result would follow from
Lemma~\ref{HJA:AA:2}).

That is, our objective is to show that~$Q_*^{\zeta_1,\zeta_2}(x)$
does not touch~$\partial B_r(\zeta_1)$ when~$x\le b_1$
and does not touch~$\partial B_r(\zeta_2)$ when~$x\ge b_2$.

We assume, by contradiction, that 
\begin{equation}\label{C67HJA}
{\mbox{there exists $x_1\le b_1$ such that~$Q_*^{\zeta_1,\zeta_2}(x_1)\in
\partial B_r(\zeta_1)$,
}}\end{equation}
the other case being similar (just using~\eqref{A:EX2}
in the place of~\eqref{A:EX1}).

By \eqref{ASR5},
there exist sequences~$x_k\le b_1$, with~$x_k
\to-\infty$ as~$k\to+\infty$ and~$y_k\ge b_2$, with~$y_k\to+\infty$
as~$k\to+\infty$, and such that
\begin{equation}\label{142536748KKA}
{\mbox{$Q_*^{\zeta_1,\zeta_2}(x_k)
\in B_\rho(\zeta_1)$ and $Q_*^{\zeta_1,\zeta_2}(y_k)
\in B_\rho(\zeta_2)$.}}\end{equation}
We
observe that
$$ b_2-b_1\ge 3L.$$
Hence, by~\eqref{L:DEF:pre},
condition~\eqref{J:ASSU} is satisfied by
the interval~$(b_1+L,b_1+2L)\subseteq(b_1+L,b_2-L)$ 
(recall Remark~\ref{56edtycgshd2312989019381}).
Consequently, by Lemma~\ref{CLEAN:LEMMA},
\begin{equation}\label{098ujssfs}
\begin{split}
&{\mbox{there exist a clean point~$x_*\in
(b_1+L,b_1+2L)$ and~$\zeta\in\Z^n$}}\\&{\mbox{such that~$Q_*^{\zeta_1,\zeta_2}(x_*)\in 
\overline{B_\rho(\zeta)}$.}}\end{split}\end{equation}
By Corollary~\ref{UELO:2}, we obtain that only two cases may occur,
namely either~$\zeta=\zeta_1$ or~$\zeta=\zeta_2$.

Suppose first that~$\zeta=\zeta_1$. 
Then, in virtue of~\eqref{142536748KKA} and~\eqref{STIMA:AA2} in Proposition~\ref{STICA888},
we have that~$Q_*^{\zeta_1,\zeta_2}(x)\in \overline{B_{r/2}(\zeta_1)}$
for every~$x\in [x_k,x_*]$ and so, by sending~$k\to+\infty$,
for every~$x\in (-\infty,x_*]$. In particular, we get that~$Q_*^{\zeta_1,\zeta_2}(x)\in \overline{B_{r/2}(\zeta_1)}$
for every~$x\le b_1$ and this is in contradiction with~\eqref{C67HJA}.

Therefore, it only remains to check what happens if
\begin{equation}\label{095678jhgF}
\zeta=\zeta_2.\end{equation}
In this case, we use~\eqref{142536748KKA}
and~\eqref{STIMA:AA2} in Proposition~\ref{STICA888} to
see that~$Q_*^{\zeta_1,\zeta_2}(x)\in \overline{B_{r/2}(\zeta_2)}$
for every~$x\in [x_*,y_k]$ and so, in particular,
\begin{equation}\label{98swHHH}
{\mbox{$Q_*^{\zeta_1,\zeta_2}(x)\in \overline{
B_{r/2}(\zeta_2)}$ for every~$x\ge b_2-L$.}}
\end{equation} 
Now we define~$P(x):=Q_*^{\zeta_1,\zeta_2}(x-L)$.
Due to~\eqref{98swHHH}, we have that~$P\in\Gamma(\zeta_1,\zeta_2,b_1,b_2)$
and therefore, by the minimality of $Q_*^{\zeta_1,\zeta_2}$,
\begin{equation}\label{9ytguijh}
\begin{split}
& 0 \le I(P)-I(Q_*^{\zeta_1,\zeta_2})=
\int_\R a(x)\, W(P(x))\,dx -\int_\R a(x)\, W(Q_*^{\zeta_1,\zeta_2}(x))\,dx\\
&\qquad=
\int_\R a(x)\, W(Q_*^{\zeta_1,\zeta_2}(x-L
))\,dx -\int_\R a(x)\, W(Q_*^{\zeta_1,\zeta_2}(x))\,dx
\\ &\qquad=
\int_\R \big[ a(x+L)-a(x) \big]\, W(Q_*^{\zeta_1,\zeta_2}(x))\,dx.
\end{split}\end{equation}
Now, recalling~\eqref{L:DEF:pre}, we see that
condition~\eqref{J:ASSU} is satisfied by
the interval~$(b_1-L,b_1)$ and so, by Lemma~\ref{CLEAN:LEMMA},
we find some~$\zeta_\sharp\in\Z^n$ and
a clean point~$x_{\sharp}\in (b_1-L,b_1)$
with~$Q_*^{\zeta_1,\zeta_2}(x_{\sharp})\in\overline{B_\rho(\zeta_\sharp)}$.
Since~$Q_*^{\zeta_1,\zeta_2}\in \Gamma(\zeta_1,\zeta_2,b_1,b_2)$,
necessarily~$\zeta_\sharp=\zeta_1$.

Accordingly, by~\eqref{STIMA:AA1},
and recalling~\eqref{098ujssfs} and~\eqref{095678jhgF}, for large~$k$ we have that
$$ \int_{x_k}^{x_\sharp} a(x)\,W(Q_*^{\zeta_1,\zeta_2}(x))\,dx\le 
\diamondsuit
\qquad{\mbox{ and }}\qquad
\int_{x_*}^{y_k} a(x)\,W(Q_*^{\zeta_1,\zeta_2}(x))\,dx\le \diamondsuit,$$
and thus, sending~$k\to+\infty$,
$$ \int_{-\infty}^{b_1-L} W(Q_*^{\zeta_1,\zeta_2}(x))\,dx
+\int_{b_1+2L}^{+\infty} W(Q_*^{\zeta_1,\zeta_2}(x))\,dx\le 
\diamondsuit.$$
Using this and~\eqref{A:EX1} into~\eqref{9ytguijh}, we conclude that          
\begin{equation}\label{9ijks78ds7JJ}
\begin{split}
0\;& \le \diamondsuit
+
\int_{b_1-L}^{b_1+2L} \big[ a(x+L)-a(x) \big]\, W(Q_*^{\zeta_1,\zeta_2}(x))\,dx
\\&\le 
\diamondsuit
-\gamma
\int_{b_1-L}^{b_1+2L} W(Q_*^{\zeta_1,\zeta_2}(x))\,dx.\end{split}\end{equation}
Now we observe that~$Q_*^{\zeta_1,\zeta_2}(b_1-L)\in\overline{B_r(\zeta_1)}$
and~$Q_*^{\zeta_1,\zeta_2}(x_*)\in\overline{B_r(\zeta_2)}$,
due to~\eqref{098ujssfs} and~\eqref{095678jhgF}. Therefore, by continuity, there exists~$y_*\in
(b_1-L,x_*)\subseteq(b_1-L,b_1+2L)$ 
such that~$Q_*^{\zeta_1,\zeta_2}(y_*)$ stays at distance~$1/4$
from~$\Z^n$. By~\eqref{ORA}, we find an interval~$J_*$ of uniform length
centered at~$y_*$ such that~$Q_*^{\zeta_1,\zeta_2}(x)$
stays at distance greater than~$1/8$
from~$\Z^n$, for any~$x\in J_*$. So we let~$J_\sharp:= J_*\cap (b_1-L,b_1+2L)$
and we get that~$|J_\sharp|\ge |J_*|/2\ge \tilde c$, for some~$\tilde c>0$,
and
$$ \int_{b_1-L}^{b_1+2L} W(Q_*^{\zeta_1,\zeta_2}(x))\,dx
\ge \int_{J_\sharp} W(Q_*^{\zeta_1,\zeta_2}(x))\,dx\ge\tilde c\,
\inf_{{\rm dist}\,(\tau,\Z^n)\ge1/8} W(\tau)=:\hat{c}.$$
By plugging this into~\eqref{9ijks78ds7JJ}, we conclude that
$$ 0\le 
\diamondsuit-\hat{c}\gamma.$$
The latter quantity is negative for small~$\rho$ and so we have obtained
the desired contradiction.
\end{proof}

\section{Chaotic orbits and proof of Theorem~\ref{TH:MAIN}}\label{JAH:SS2}

This section deals with the construction
of orbits which shadow a given sequence of integer points.
The integers are chosen in such a way that there
is an heteroclinic orbit joining them,
as given by~\eqref{89JKA:99:MI}.

We have seen in Corollary~\ref{UELO:2}
that, when joining two integer points in an optimal way,
it is not worth to get close to other integers.
Now we want to prove a global version of this fact,
namely, when connecting several integer points, in the excursion
between two of them 
it is not worth to get close to other integers.
Of course, the situation in this case
is more complicated than 
the one in Corollary~\ref{UELO:2},
because a single heteroclinic is not a good competitor
for the whole multibump trajectory
(even in the local case, and the nonlocal
feature of the energy gives additional complications
when cutting the orbits).

In this context, the result that we have is the following:

\begin{proposition}\label{UELO:j}
Let~$s_0\in\left(\frac12,1\right)$ and~$s\in[s_0,1)$.
There exist
$\rho_*>0$ and~$C_*>0$,
possibly depending on~$n$, $s_0$ and
the structural constants
of the kernel and the potential,
such that if~$\rho\in(0,\rho_*]$
the following statement holds.

Assume that
\begin{equation}
{\mbox{$\xi_{i+1}\in {\mathcal{A}}(\zeta_i)$ for all~$i\in\{ 1,\dots,N-1\}$}}
\end{equation}
and that
\begin{equation}\label{9ucjvxJJJJJ1616}
b_{i+1}\ge b_i + \frac{C_*\,|\log\rho|}{ 
\rho^{\frac{4s}{2s-1}} }
\;
{\mbox{for all~$i\in\{ 1,\dots,2N-3\}$.}}
\end{equation}
Let~$Q_*\in\Gamma(\vec\zeta,\vec{b})$ be the minimal trajectory
given in Lemma~\ref{HJA:AA}.

Suppose that there exist~$\zeta\in\Z^n$, $j\in\{0,\dots,N-2\}$
and a clean point~$x_*\in [b_{2j+1},b_{2j+2}-1]$ such that
\begin{equation}
Q_*(x_*)\in \overline{ B_\rho(\zeta) }.
\end{equation}
Then~$\zeta\in \{\zeta_{j+1},\zeta_{j+2} \}$.
\end{proposition}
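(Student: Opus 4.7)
Suppose for contradiction that $\zeta\notin\{\zeta_{j+1},\zeta_{j+2}\}$. The plan is to isolate the segment of $Q_*$ between two clean points $y_1,y_2$ bracketing $x_*$, at which $Q_*$ is $\rho$-close to $\zeta_{j+1}$ and $\zeta_{j+2}$ respectively. Extending this segment to an admissible bi-heteroclinic trajectory $\hat Q$ will allow Lemma~\ref{UELO} to force the segment of $Q_*$ to cost at least $H+c$, where $H:=I_{\zeta_{j+1}}=I(Q_*^{\zeta_{j+1},\zeta_{j+2}})$ and $c>0$. Replacing the same segment by an actual translated heteroclinic will then produce a global competitor $\tilde Q\in\Gamma(\vec\zeta,\vec b)$ paying only $H+\diamondsuit$ on $[y_1,y_2]$, so $I(\tilde Q)-I(Q_*)\le-c+\diamondsuit<0$ for $\rho$ small, contradicting the minimality of $Q_*$.

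To locate $y_1,y_2$: for interior indices $j\in\{1,\dots,N-3\}$, the width hypothesis~\eqref{9ucjvxJJJJJ1616} and Remark~\ref{56edtycgshd2312989019381} let Lemma~\ref{CLEAN:LEMMA} produce clean points of $(\rho,Q_*)$ in both $(b_{2j},b_{2j+1})$ and $(b_{2j+2},b_{2j+3})$. Since $Q_*$ is constrained to $\overline{B_r(\zeta_{j+1})}$ on the first window and to $\overline{B_r(\zeta_{j+2})}$ on the second, and the integer lattice has unit spacing, for $\rho<(1-r)/2$ the integer associated to each clean interval is forced to be $\zeta_{j+1}$ (respectively $\zeta_{j+2}$), so $|Q_*(y_1)-\zeta_{j+1}|\le\rho$ and $|Q_*(y_2)-\zeta_{j+2}|\le\rho$. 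For the boundary cases $j\in\{0,N-2\}$, one of the clean points is instead selected using~\eqref{ASR5} far out in the relevant asymptotic region. In all cases $y_1<x_*<y_2-1$.

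Define $\hat Q$ by constant extension of $Q_*|_{[y_1,y_2]}$ to $\zeta_{j+1}$ on $(-\infty,y_1-1]$ and to $\zeta_{j+2}$ on $[y_2+1,+\infty)$, with short linear interpolations on $[y_1-1,y_1]$ and $[y_2,y_2+1]$ for continuity. Then $\hat Q\in\Gamma(\zeta_{j+1},\zeta_{j+2},y_1,y_2+1)$; the H\"older regularity from~\eqref{ORA} and the Lipschitz bound~\eqref{LI:0988jhvhjakjhdfiwqo} at $x_*$ (supplied by Lemma~\ref{9jkdJJKA} applied to $Q_*$ in the free window, exactly as in~\eqref{098idkscf7gihoj777:BIS}) carry over to $\hat Q$. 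Since $y_1,y_2$ are clean, Remark~\ref{DIAMOND} yields
\[
I(\hat Q)\le E_{(y_1,y_2)}(Q_*)+\int_{y_1}^{y_2}a(x)\,W(Q_*(x))\,dx+\diamondsuit,
\]
while Lemma~\ref{UELO} applied to $\hat Q$ (with $\tilde\zeta:=\zeta_{j+2}$) gives $I(\hat Q)\ge H+c$ for some $c>0$. Combining,
\[
E_{(y_1,y_2)}(Q_*)+\int_{y_1}^{y_2}a(x)\,W(Q_*(x))\,dx\ge H+c-\diamondsuit.
\]

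Now let $h$ be a translate of $Q_*^{\zeta_{j+1},\zeta_{j+2}}$ whose transition takes place well inside $(b_{2j+1},b_{2j+2})$; applying Proposition~\ref{STICA888} to $h$ between two of its own clean points forces $h\in\overline{B_{r/2}(\zeta_{j+1})}$ on $[b_{2j},b_{2j+1}]$ and $h\in\overline{B_{r/2}(\zeta_{j+2})}$ on $[b_{2j+2},b_{2j+3}]$. Set $\tilde Q:=Q_*$ on $(-\infty,y_1]\cup[y_2,+\infty)$ and $\tilde Q:=h$ on $(y_1,y_2)$, with short linear interpolations around $y_1,y_2$; then $\tilde Q\in\Gamma(\vec\zeta,\vec b)$. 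Two applications of Remark~\ref{DIAMOND}, together with $I(h)=H$, give
\[
I(\tilde Q)\le E_{(-\infty,y_1)}(Q_*)+\int_{-\infty}^{y_1}a(x)\,W(Q_*(x))\,dx+H+E_{(y_2,+\infty)}(Q_*)+\int_{y_2}^{+\infty}a(x)\,W(Q_*(x))\,dx+\diamondsuit.
\]
By super-additivity of $E$ and additivity of the potential term, $I(Q_*)$ is bounded below by the same sum with $H$ replaced by $E_{(y_1,y_2)}(Q_*)+\int_{y_1}^{y_2}a(x)\,W(Q_*(x))\,dx$; subtracting and invoking the lower bound just proved yields $I(\tilde Q)-I(Q_*)\le-c+\diamondsuit$, which is strictly negative for $\rho$ small---contradicting minimality. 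The main technical obstacle is the admissibility bookkeeping, in particular positioning the translate of $h$ to comply simultaneously with the $r$-constraints on both adjacent windows, and verifying that the Lipschitz hypothesis~\eqref{LI:0988jhvhjakjhdfiwqo} of Lemma~\ref{UELO} transfers to $\hat Q$; the boundary cases $j=0,N-2$ require only cosmetic modifications using the asymptotics of $Q_*$ at $\pm\infty$.
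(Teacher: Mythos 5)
Your proof follows the same overall variational strategy as the paper's: truncate $Q_*$ between two clean points to produce an admissible bi\-heteroclinic $\hat Q$, invoke Lemma~\ref{UELO} for the lower bound $I(\hat Q)\ge H+c$, translate that into a lower bound on the local energy of $Q_*$, and then cut in the cheaper heteroclinic to beat the minimum. That skeleton matches the paper exactly.

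There is, however, a genuine gap in the construction of $\tilde Q$. You define $\tilde Q$ by gluing $Q_*$ to $h$ directly at the clean points $y_1,y_2$ of $Q_*$, with ``short linear interpolations.'' But $y_1,y_2$ are clean points only for $Q_*$, not for $h$: while $Q_*(y_1)$ is $\rho$-close to $\zeta_{j+1}$, all you control for $h$ near $y_1$ is the constraint $|h-\zeta_{j+1}|\le r$, since $y_1\in(b_{2j},b_{2j+1})$. The interpolation from $Q_*(y_1)$ to $h(y_1+1)$ therefore has Lipschitz constant of order $r$, which is \emph{not} small, so the local terms in Proposition~\ref{GLUE} (and hence the bound~\eqref{89ioKKKAIIJHH}) do not shrink to $\diamondsuit$ as $\rho\to0$. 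The paper avoids this precisely by introducing \emph{two} separate families of clean points: $z_{*,1},z_{*,2}$ near $b_{2j},b_{2j+3}$ for $Q_*$ and $y_{*,1},y_{*,2}$ near $b_{2j+1},b_{2j+2}$ for the heteroclinic (Lemma~\ref{CLEAN:LEMMA} applied to each), and then pads with the exact constants $\zeta_{j+1},\zeta_{j+2}$ between them, so that every single glue in $\tilde Q$ takes place at a clean point of the function being glued and the $\diamondsuit$ control survives. The appeal to Proposition~\ref{STICA888} in your argument does not repair this: it only gives $|h-\zeta_{j+1}|\le r/2$, which is still too crude for the Lipschitz bound needed in Remark~\ref{DIAMOND}.

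A secondary issue is the claim ``$I(h)=H$'' for a translate $h$ of $Q_*^{\zeta_{j+1},\zeta_{j+2}}$: since the functional $I$ depends on $x$ through $a$, it is not translation invariant, so $I(h)=H$ holds only when the translation is by a multiple of the period of $a$. The cleaner reading (which the paper takes implicitly) is that $Q_*^{\zeta_{j+1},\zeta_{j+2}}$ is the heteroclinic minimizer for the window $(b_{2j+1},b_{2j+2})$ itself, with no translation, so that the quantity appearing in Lemma~\ref{UELO} and the quantity you substitute into $\tilde Q$ are literally the same. Once these two points are addressed --- introducing separate clean points for $Q_*$ and for the heteroclinic with constant padding in between, and fixing the meaning of $h$ and $H$ --- the argument closes as in the paper.
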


\begin{remark} {\rm When~$N=2$ and~$j=0$,
the claim in Proposition~\ref{UELO:j}
reduces to that in Corollary~\ref{UELO:2}.
}\end{remark}

\begin{proof}[Proof of Proposition~\ref{UELO:j}]
The idea is, roughly speaking, that we can
diminish the energy by glueing a heteroclinic
in lieu of the wide excursion.
The argument is depicted in Figure~\ref{GLUE-FIG} and
the rigorous, and not trivial, details are the following.

\begin{figure}
    \centering
    \includegraphics[width=16.8cm]{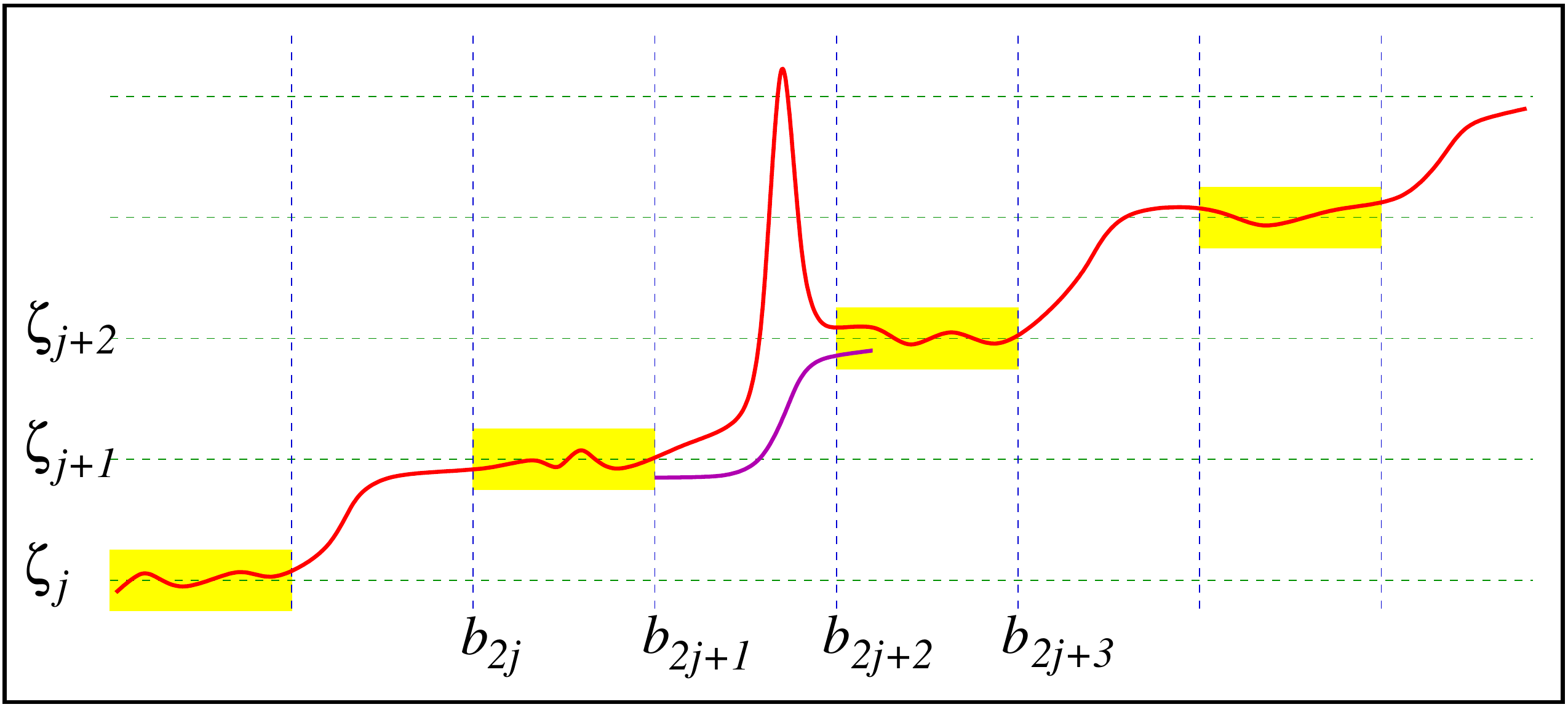}
    \caption{Glueing~$Q_*$ with~$Q_*^{\zeta_{j+1},\zeta_{j+2}}$.}
    \label{GLUE-FIG}
\end{figure}

We argue by contradiction and we suppose that 
\begin{equation}\label{HJAKLL}
\zeta\not\in \{\zeta_{j+1},\zeta_{j+2} \}.\end{equation}
Thanks to~\eqref{9ucjvxJJJJJ1616},
we can exploit Lemma~\ref{CLEAN:LEMMA}
and find clean points for~$Q_*^{\zeta_{j+1},\zeta_{j+2}}$, namely
\begin{eqnarray*} &&y_{*,1}\in 
(b_{2j+1}-C \rho^{-\frac{4s}{2s-1}}\,|\log\rho|,\;b_{2j+1}-1)\\
{\mbox{and }}&& y_{*,2}\in (b_{2j+2}+1,\;b_{2j+2}+C
\rho^{-\frac{4s}{2s-1}}\,|\log\rho| )\end{eqnarray*} such that
\begin{eqnarray*}
&&\sup_{x\in \left[y_{*,1}-\frac{|\log\rho|}{2},\;
y_{*,1}+\frac{|\log\rho|}{2}\right]}|Q_*^{\zeta_{j+1},\zeta_{j+2}}(x)-\zeta_{j+1}|\le \rho\\
{\mbox{and }}
&&\sup_{x\in \left[y_{*,2}-\frac{|\log\rho|}{2},\;
y_{*,2}+\frac{|\log\rho|}{2}\right]}
|Q_*^{\zeta_{j+1},\zeta_{j+2}}(x)-\zeta_{j+2}|\le \rho.\end{eqnarray*}
Similarly, we find clean points for~$Q_*$, say
\begin{eqnarray*} &&
z_{*,1}\in
(b_{2j},\;b_{2j}+C \rho^{-\frac{4s}{2s-1}}\,|\log\rho|)
\\ {\mbox{and }}&&z_{*,2}\in
(b_{2j+3}-C\rho^{-\frac{4s}{2s-1}}\,|\log\rho|,\;b_{2j+3})\end{eqnarray*}
with
\begin{eqnarray*}
&&\sup_{x\in \left[z_{*,1}-\frac{|\log\rho|}{2},\;
z_{*,1}+\frac{|\log\rho|}{2}\right]}
|Q_*(x)-\zeta_{j+1}|\le \rho\\ {\mbox{ and }}&&
\sup_{x\in \left[z_{*,2}-\frac{|\log\rho|}{2},\;
z_{*,2}+\frac{|\log\rho|}{2}\right]}
|Q_*(x)-\zeta_{j+2}|\le \rho.
\end{eqnarray*} 
Then we define
$$ Q^\sharp(x):=\left\{
\begin{matrix}
\zeta_{j+1} & {\mbox{ if }} x< z_{*,1}-1,\\
Q_*(z_{*,1})\,(x-z_{*,1}+1)+\zeta_{j+1}\,(z_{*,1}-x) 
& {\mbox{ if }} x\in[z_{*,1}-1,z_{*,1}],\\
Q_*(x) & {\mbox{ if }} x\in(z_{*,1},z_{*,2}),\\
Q_*(z_{*,2})\,(z_{*,2}+1-x)
+\zeta_{j+2}\,(x-z_{*,2})
& {\mbox{ if }} x\in[z_{*,2},z_{*,2}+1],\\
\zeta_{j+2} & {\mbox{ if }} x> z_{*,2}+1.
\end{matrix}
\right. $$
Thus, recalling the notation in Remark~\ref{DIAMOND}
and formula~\eqref{DIAMOND:EQ},
\begin{equation}\label{8ujkdHHHH1728394}
E(Q^\sharp) \le
E_{(z_{*,1},z_{*,2})}(Q_*) + \diamondsuit.\end{equation}
On the other hand, by construction~$x_*\in (z_{*,1},z_{*,2})$, therefore
\begin{equation}\label{9iujdUUUU}
Q^\sharp(x_*)
=Q_*(x_*)\in\overline{B_\rho(\zeta)}.\end{equation}
Notice also that~$Q^\sharp\in\Gamma(\zeta_{j+1},\zeta_{j+2},b_{2j+1},b_{2j+2})$.
Hence, we use~\eqref{HJAKLL} and~\eqref{9iujdUUUU}
in combination with Lemma~\ref{UELO}, to find that
\begin{equation*}
I(Q^\sharp)\ge I( Q_*^{\zeta_{j+1},\zeta_{j+2}}) + c,
\end{equation*}
for some~$c>0$. This and~\eqref{8ujkdHHHH1728394} give that
\begin{equation}\label{980iodwe6789}
\begin{split}
c\;&\le I(Q^\sharp)-I(Q_*^{\zeta_{j+1},\zeta_{j+2}}) \\
&\le E_{(z_{*,1},z_{*,2})}(Q_*) -E(Q_*^{\zeta_{j+1},\zeta_{j+2}})
+\int_{z_{*,1}}^{z_{*,2}} a(x)\,W(Q_*(x))\,dx
-\int_\R a(x)\,W(Q_*^{\zeta_{j+1},\zeta_{j+2}}(x))\,dx
+ \diamondsuit\\
&\le E_{(z_{*,1},z_{*,2})}(Q_*) -E_{(z_{*,1},z_{*,2})}(Q_*^{\zeta_{j+1},\zeta_{j+2}})
+\int_{z_{*,1}}^{z_{*,2}} a(x)\,\big[W(Q_*(x))
-W(Q_*^{\zeta_{j+1},\zeta_{j+2}}(x))\big]\,dx
+ \diamondsuit
.\end{split}
\end{equation}
Now we define
$$ \tilde{Q}(x):=\left\{
\begin{matrix}
Q_*(x) & {\mbox{ if }} x< z_{*,1},\\
Q_*(z_{*,1})\,(z_{*,1}+1-x)+ \zeta_{j+1}\,(x-z_{*,1})
& {\mbox{ if }} x\in [z_{*,1},z_{*,1}+1],\\
\zeta_{j+1} & {\mbox{ if }} x\in(z_{*,1}+1,\; y_{*,1}-1),\\
Q_*^{\zeta_{j+1},\zeta_{j+2}}(y_{*,1})\,
(x-y_{*,1}+1)+\zeta_{j+1}\,(y_{*,1}-x)
& {\mbox{ if }} x\in[y_{*,1}-1,y_{*,1}],\\
Q_*^{\zeta_{j+1},\zeta_{j+2}}(x) & {\mbox{ if }} x\in(y_{*,1},y_{*,2}),\\
Q_*^{\zeta_{j+1},\zeta_{j+2}}(y_{*,2})\,(y_{*,2}+1-x)
+\zeta_{j+2}\,(x-y_{*,2})
& {\mbox{ if }} x\in[y_{*,2},\;y_{*,2}+1],\\
\zeta_{j+2} & {\mbox{if }}x\in(y_{*,2}+1,\;z_{*,2}-1),\\
Q_*(z_{*,2})\,(x-z_{*,2}+1) +\zeta_{j+2}\,(z_{*,2}-x)
& {\mbox{if }}x\in[z_{*,2}-1,\;z_{*,2}],\\
Q_*(x)& {\mbox{ if }} x> z_{*,2}.
\end{matrix}
\right. $$
Accordingly, exploiting~\eqref{DIAMOND:EQ},
$$ E(\tilde{Q}) \le
E_{(-\infty,z_{*,1})}(Q_*)+
E_{(y_{*,1},y_{*,2})}(Q_*^{\zeta_{j+1},\zeta_{j+2}}) +
E_{(z_{*,2},+\infty)}(Q_*)+\diamondsuit.$$
Then, since~$(y_{*,1},y_{*,2})\subseteq(z_{*,1},z_{*,2})$,
\begin{equation}\label{8ujkdHHHH1728394:SIKAO}
E(\tilde{Q}) \le
E_{(-\infty,z_{*,1})}(Q_*)+
E_{(z_{*,1},z_{*,2})}(Q_*^{\zeta_{j+1},\zeta_{j+2}}) + 
E_{(z_{*,2},+\infty)}(Q_*)+\diamondsuit.\end{equation}
Also, $\tilde{Q}\in \Gamma(\vec\zeta,\vec{b})$, hence the minimality
of~$Q_*$ gives that
\begin{equation}\label{90iokd68889:0a0}
I(Q_*)\le I(\tilde Q).\end{equation}
Furthermore
\begin{eqnarray*}
\int_{z_{*,1}}^{z_{*,2}} a(x)\,W(\tilde Q(x))\,dx&=&
\int_{y_{*,1}}^{y_{*,2}} a(x)\,W(Q_*^{\zeta_{j+1},\zeta_{j+2}}
(x))\,dx+\diamondsuit\\
&\le&
\int_{z_{*,1}}^{z_{*,2}} a(x)\,W(Q_*^{\zeta_{j+1},\zeta_{j+2}}
(x))\,dx+\diamondsuit.\end{eqnarray*}
This,
\eqref{8ujkdHHHH1728394:SIKAO} and~\eqref{90iokd68889:0a0}
imply that
\begin{eqnarray*}
0 &\le& I(\tilde Q)-I(Q_*)\\
&\le&
E_{(-\infty,z_{*,1})}(Q_*)+
E_{(z_{*,1},z_{*,2})}(Q_*^{\zeta_{j+1},\zeta_{j+2}}) +
E_{(z_{*,2},+\infty)}(Q_*) - E(Q_*)
\\ &&\qquad+\int_{z_{*,1}}^{z_{*,2}} a(x)\,\big[ W(
\tilde Q(x))-
W(Q_*(x))\big]\,dx+\diamondsuit\\
&\le& E_{(z_{*,1},z_{*,2})}(Q_*^{\zeta_{j+1},\zeta_{j+2}})
- E_{(z_{*,1},z_{*,2})}(Q_*)
+\int_{z_{*,1}}^{z_{*,2}} a(x)\,\big[ W(Q_*^{\zeta_{j+1},\zeta_{j+2}}(x))-
W(Q_*(x))\big]\,dx+\diamondsuit.
\end{eqnarray*}
Comparing this with~\eqref{980iodwe6789}, we obtain that~$c\le\diamondsuit$,
which is a contradiction when we make~$\diamondsuit$ as small as we wish
(recall the notation in Remark~\ref{DIAMOND}).
\end{proof}

Now we can construct the desired multibump trajectories:

\begin{theorem}\label{CHS}
Let~$s_0\in\left(\frac12,1\right)$ and~$s\in[s_0,1)$.
Assume that~\eqref{ASS:A:NONDEG} holds.

There exist
$\epsilon_*>0$
and~$b_{2N-2}>b_{2N-3}>\dots>b_2>b_1\in\R$,
possibly depending on~$n$ and
the structural constants
of the kernel and the potential,
such that if~$\epsilon\in(0,\epsilon_*]$,
the following statement holds.

Let~$\zeta_1\in\Z^n$. Let~$\zeta_2\in{\mathcal{A}}(\zeta_1)$,
$\dots$, $\zeta_N\in{\mathcal{A}}(\zeta_{N-1})$.

Then~$Q_*^{\zeta_1,\dots,\zeta_N}$ is a solution of~\eqref{EQUAZ}.
\end{theorem}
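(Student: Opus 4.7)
The plan is to mimic the strategy of Theorem~\ref{HET}, replacing its global translation by a \emph{localized} one glued at clean points. By Lemma~\ref{HJA:AA:2}, it suffices to prove that the minimizer $Q_*:=Q_*^{\zeta_1,\dots,\zeta_N}$ from Lemma~\ref{HJA:AA} does not saturate any of the pointwise constraints in~\eqref{08453647595436}; once this is shown, $Q_*$ satisfies the Euler--Lagrange equation~\eqref{HJA:AA:2:EQ} on all of $\R$ and hence solves~\eqref{EQUAZ}. First I would set $L:=\pi/(12\epsilon)$ and fix $\rho$ small enough so that Propositions~\ref{STICA888} and~\ref{UELO:j} and the $\diamondsuit$-estimates of Remark~\ref{DIAMOND} are all available. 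Then I would choose the points $b_1<\dots<b_{2N-2}$ so that each pair $(b_{2k-1},b_{2k})$ plays the role of $(b_1,b_2)$ from Theorem~\ref{HET}: set $b_{2k}-b_{2k-1}=22L$, arrange $\epsilon b_{2k-1}\in[0,\pi/4]$ modulo $2\pi$ so that the one-sided modulation estimates~\eqref{A:EX1}--\eqref{A:EX2} hold near each $b_{2k-1}$ and $b_{2k}$ with a common constant $\gamma>0$, and make the spacing $b_{i+1}-b_i$ large enough to meet~\eqref{9ucjvxJJJJJ1616}, so that Lemma~\ref{CLEAN:LEMMA} applies in every free interval (cf.\ Remark~\ref{56edtycgshd2312989019381}).

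Second, I would argue by contradiction: assume that there exist $k\in\{1,\dots,N\}$ and a point $\bar x$ in the $k$-th constraint window such that $|Q_*(\bar x)-\zeta_k|=r$. For $k\in\{1,N\}$ the proof of Theorem~\ref{HET} applies essentially verbatim, simply replacing Corollary~\ref{UELO:2} by Proposition~\ref{UELO:j}. The new situation is $k\in\{2,\dots,N-1\}$. I would apply Lemma~\ref{CLEAN:LEMMA} to the two bounded free intervals $(b_{2k-3},b_{2k-2})$ and $(b_{2k-1},b_{2k})$ to produce clean points $x_*^-$ and $x_*^+$ for $(\rho,Q_*)$, and then invoke Proposition~\ref{UELO:j} to conclude that $Q_*(x_*^-)\in\overline{B_\rho(\zeta_{k-1})}\cup\overline{B_\rho(\zeta_k)}$ and $Q_*(x_*^+)\in\overline{B_\rho(\zeta_k)}\cup\overline{B_\rho(\zeta_{k+1})}$. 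If both clean values equal $\zeta_k$, Proposition~\ref{STICA888} applied on $[x_*^-,x_*^+]$ gives $|Q_*(x)-\zeta_k|\le r/2$ throughout that interval, contradicting $|Q_*(\bar x)-\zeta_k|=r$.

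In the remaining case---say $Q_*(x_*^+)\in\overline{B_\rho(\zeta_{k+1})}$, the subcase at $x_*^-$ being symmetric---I would construct a competitor $\tilde Q$ by a local translation: cut $Q_*$ at $x_*^+$, translate the left piece by $+L$, and glue the two pieces back together at a clean point using the linear-interpolation-with-integer prescription of Remark~\ref{DIAMOND}. Admissibility $\tilde Q\in\Gamma(\vec\zeta,\vec b)$ holds because on every constraint window not involved in the translation $\tilde Q$ coincides with $Q_*$, while on those windows that \emph{are} touched by the translation, Proposition~\ref{STICA888} together with the clean-point structure forces $Q_*$ to be $\rho$-close to the correct integer on neighborhoods of length much larger than $L$, so the shifted function still lies in $\overline{B_r(\zeta_i)}$ on the whole window. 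The nonlocal energy changes only by $\diamondsuit$ thanks to~\eqref{DIAMOND:EQ}--\eqref{89ioKKKAIIJHH}, with the Lipschitz control~\eqref{098idkscf7gihoj777:BIS} coming from Lemma~\ref{9jkdJJKA}, while the potential part strictly decreases by at least $\hat c\gamma>0$ from~\eqref{A:EX1} at $b_{2k-1}$, exactly as in~\eqref{9ytguijh}--\eqref{9ijks78ds7JJ}. Taking $\rho$ sufficiently small yields $0\le I(\tilde Q)-I(Q_*)\le\diamondsuit-\hat c\gamma<0$, the desired contradiction.

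The main obstacle is the localization step just described. Unlike in Theorem~\ref{HET}, a global translation of $Q_*$ would disrupt every other constraint window, so the shift must act on only one piece of $Q_*$ and be glued back at a clean point. Two quantitative checks are needed: that the modified trajectory still lies in $\Gamma(\vec\zeta,\vec b)$ (handled by propagating stickiness from the clean points through Proposition~\ref{STICA888} into neighborhoods of length $\gg L$), and that the nonlocal glueing cost is genuinely $\diamondsuit$-small (handled by the clean-point flatness of Section~\ref{CLEAN:SECT} together with the uniform regularity Lemma~\ref{9jkdJJKA}). Only once both checks are in place does the modulation gain $\hat c\gamma$ survive and yield the contradiction.
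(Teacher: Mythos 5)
Your overall strategy matches the paper's: argue by contradiction, use Lemma~\ref{CLEAN:LEMMA} to produce clean points, invoke Proposition~\ref{UELO:j} to pin down which integer the orbit is near, use Proposition~\ref{STICA888} to rule out the ``both clean values equal $\zeta_k$'' subcase, and otherwise compare with a locally translated competitor exploiting~\eqref{A:EX1}--\eqref{A:EX2}. The choice of $L=\pi/(12\epsilon)$, the modular placement of the $b_i$, and the use of Remark~\ref{DIAMOND} and Lemma~\ref{9jkdJJKA} to make the glueing cost~$\diamondsuit$-small are also the same. However, the decisive step --- the ``local'' translation --- has a genuine gap as you describe it.

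You propose to ``cut $Q_*$ at $x_*^+$, translate the left piece by $+L$, and glue back at a clean point.'' That is not a local translation: it is a shift of the entire half-line $(-\infty,x_*^+]$. Your admissibility claim, that Proposition~\ref{STICA888} and the clean-point structure force $Q_*$ to remain $\rho$-close (or $r/2$-close) to the correct integer on neighborhoods of length $\gg L$ surrounding every left constraint window, is not established and is in general false. In each free window $(b_{2i-1},b_{2i})$ the trajectory makes an excursion from $\zeta_i$ to $\zeta_{i+1}$, and by Proposition~\ref{UELO:j} a clean point near $b_{2i}$ may be close to either $\zeta_i$ or $\zeta_{i+1}$; nothing forces the transition to sit at distance $\gg L$ from $b_{2i}$. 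Thus a global shift of the left piece can push a transition into a constraint window and the shifted function need not lie in $\Gamma(\vec\zeta,\vec b)$. This is precisely the obstruction the paper identifies as the reason the argument of Theorem~\ref{HET} does not carry over directly when $N\ge 3$.

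The paper's resolution requires a second cut. First it builds an intermediate competitor $Q_1\in\Gamma(\vec\zeta,\vec b)$ by grafting long \emph{constant} plateaus onto $Q_*$: it picks a clean point $z_*$ inside the previous constraint window $[b_{2j-1}-L/2,b_{2j-1}-L/4]$ (where $Q_*$ is automatically near $\zeta_j$), a clean point $x_*$ in the free window near $b_{2j}$ (shown by Proposition~\ref{UELO:j} and a stickiness argument to be close to $\zeta_j$), and clean points $y_{*,\pm}$ inside the target window near $\zeta_{j+1}$, then replaces $Q_*$ by $\zeta_j$ on $[z_*+1,x_*-1]$ and by $\zeta_{j+1}$ on $[y_{*,-}+1,y_{*,+}-1]$, costing only $\diamondsuit$ by~\eqref{DIAMOND:EQ}. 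Then $Q_2$ is obtained from $Q_1$ by translating only the bounded middle piece $[x_*-L,y_{*,-}]$ by $+L$ and reinterpolating at both ends. Because $Q_1\equiv\zeta_j$ on $[z_*+1,x_*-1]$ (length $\approx 22L$) and $Q_1\equiv\zeta_{j+1}$ on $[y_{*,-}+1,y_{*,+}-1]$ (length $\approx 21L$), the shift is entirely absorbed by these plateaus; $Q_2$ and $Q_1$ differ only on a compact subset of the free zone, so $Q_2\in\Gamma(\vec\zeta,\vec b)$ automatically and no claim about $Q_*$ being globally flat is needed. You would need to supply this intermediate flattening step and the two-sided cut; without it your admissibility argument fails and the modulation gain cannot be realized. (A secondary remark: the paper first uses clean points $y_{*,\pm}$ \emph{inside} the constraint window to localize $p_*$ near its endpoints, which you also skip; this is what determines which side to translate and matches the modular placement of the $b_i$'s with~\eqref{A:EX1} or~\eqref{A:EX2}.)
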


\begin{remark}{\rm When~$N=2$, Theorem~\ref{CHS} reduces to Theorem~\ref{HET}.}\end{remark}

\begin{proof}[Proof of Theorem~\ref{CHS}] In view of Lemma~\ref{HJA:AA:2},
we need to show that the trajectory does not hit the constraints.
We argue by contradiction. The idea of the proof is that: first, by 
Lemma~\ref{CLEAN:LEMMA}, we find an integer point close
to the trajectory in a clean interval; then, by
Proposition~\ref{UELO:j}, we localize the integer with respect to
the two integers leading to the excursion of the orbit;
this distinguishes two cases, in one case we use
Proposition~\ref{STICA888} to ``clean'' the orbit to the left
(or to the right), in the other case we will be able to translate
a piece of the orbit and make the energy decrease using~\eqref{ASS:A:NONDEG},
thus obtaining a contradiction.

The details of the argument are the following. We use the short
notation~$Q_*:= Q_*^{\zeta_1,\dots,\zeta_N}$.
By~\eqref{ASR2} and Lemma~\ref{LARGE:EN},
we know that~$I(Q_*)$ is bounded by some~$C_*>0$
(independently on the choice of~$b_1,\dots,b_{2N-2}$).
Thus, we fix~$\rho\in (0,r)$, to be taken sufficiently
small, and we set
$$ L:= \frac{\pi}{12\epsilon}.$$
We suppose that~$\epsilon$ is small enough, such that
\begin{equation} \label{L:DEF}
L\ge \frac{C_*\,|\log\rho|}{\rho^{\frac{4s}{2s-1}}}
,\end{equation}
for a suitably large constant~$C_*$, and we set~$b_1:=L$
and then recursively
\begin{equation}\label{DEF:b:sp}
\begin{split}
&b_{2j}:= b_{2j-1}+22L\\
{\mbox{and }}\quad& b_{2j+1}:=b_{2j}+50L.\end{split}\end{equation}
We suppose, by contradiction,
that there exists~$p_*$ such that one of the following cases holds true:
\begin{eqnarray}
&& \label{CASE1:pp} p_*\in (-\infty,b_1] 
{\mbox{ and }} Q_*(p_*)\in\partial B_r(\zeta_1), \\
&& \label{CASE2:pp} p_*\in [b_{2j},b_{2j+1}] 
{\mbox{ for some~$j\in \{1,\dots, N-2\}$,
and }} Q_*(p_*)\in\partial B_r(\zeta_{j+1}),
\\
&& \label{CASE3:pp} p_*\in [b_{2N-2},+\infty)
{\mbox{ and }} Q_*(p_*)\in\partial B_r(\zeta_N).
\end{eqnarray}
We deal with the cases in~\eqref{CASE1:pp}
and~\eqref{CASE2:pp}, since the case in~\eqref{CASE3:pp}
is similar to the one in~\eqref{CASE1:pp}.

So, let us first suppose that~\eqref{CASE1:pp} holds.
In this case, we observe that~$b_2-b_1=22L$
and so we can use
Lemma~\ref{CLEAN:LEMMA}
(recall~\eqref{L:DEF} and Remark~\ref{56edtycgshd2312989019381})
to find an integer point~$\zeta$ and some
clean point~$x_*\in (b_1+L,b_1+2L)$ for~$Q_*(\cdot-L)$
such that
\begin{equation}\label{Neag78jg:1}
\sup_{x\in\left[ x_*-\frac{|\log\rho|}{2},\;
x_*+\frac{|\log\rho|}{2}\right]}|Q_*(x-L)-\zeta|\le\rho.\end{equation} By
Proposition~\ref{UELO:j}, we know that either~$\zeta=\zeta_1$,
or~$\zeta=\zeta_2$. But indeed~$\zeta\ne\zeta_1$,
otherwise, by~\eqref{ASR5} and Proposition~\ref{STICA888}, we would have that~$|Q_*(x)-\zeta_1|\le r/2$
for any~$x\le x_*$, in contradiction with the assumption taken in~\eqref{CASE1:pp}.

Consequently, we have that
\begin{equation}\label{Neag78jg:2}
\zeta=\zeta_2.
\end{equation}
We also remark that, by 
Lemma~\ref{CLEAN:LEMMA}, there exists
a clean point~$y_*\in [b_2+1,b_2+1+L]$ for~$Q_*$ such that
\begin{equation}\label{Neag78jg:3}
\sup_{x\in\left[ y_*-\frac{|\log\rho|}{2},\;
y_*+\frac{|\log\rho|}{2}\right]}|Q_*(x)-\zeta_2|\le\rho.
\end{equation}
Then, we define
$$ \tilde{Q}(x):=\left\{
\begin{matrix}
Q_*(x-L) & {\mbox{ if }} x\le x_*, \\
Q_*(x_*-L)\,(x_*+1-x)+ \zeta_2\,(x-x_*) & {\mbox{ if }} x\in(x_*,\;x_*+1),\\
\zeta_2 & {\mbox{ if }} x\in[x_*+1,\; y_*-1],\\
\zeta_2\,(y_*-x)+ Q_*(y_*)\,(x-y_*+1)
& {\mbox{ if }} x\in[y_*-1,\;y_*],\\
Q_*(x) & {\mbox{ if }} x> y_*.
\end{matrix}
\right. $$
We point out that
\begin{equation}\label{89iok567654567uzxcj}
\tilde{Q}\in \Gamma(\vec\zeta,\vec{b}).\end{equation}
Indeed, if~$x\le b_1$ then~$x\le x_*$,
and also~$x-L\le b_1$,
hence~$\tilde{Q}(x)=Q_*(x-L)\in \overline{B_r(\zeta_1)}$.
In addition, if~$x\ge b_2$, we have that~$x\ge 23L\ge x_*+1$,
and so~$\tilde{Q}(x)$ always lies in a $\rho$-neighborhood
of~$\zeta_2$, up to~$x=y_*$,
or coincides with~$Q_*$, thus completing the proof of~\eqref{89iok567654567uzxcj}.

{F}rom~\eqref{89iok567654567uzxcj} and the minimality of~$Q_*$, we obtain that
\begin{equation}\label{8ikl34567765fghjkmnbvo99}
\begin{split}
0\;&\le I(\tilde{Q})-I(Q_*)\\
&\le E_{(-\infty,x_*)}(Q_*)+E_{(y_*,+\infty)}(Q_*)-E(Q_*)
\\ &\qquad+\int_{-\infty}^{x_*} a(x)\,W(Q_*(x-L))\,dx
-\int_{-\infty}^{y_*} a(x)\,W(Q_*(x))\,dx
+\diamondsuit \\
&\le \int_{-\infty}^{x_*-L} \big[a(x+L)-a(x)\big]\,W(Q_*(x))\,dx
+\diamondsuit,\end{split}
\end{equation}
where we used the notation in Remark~\ref{DIAMOND}
and~\eqref{DIAMOND:EQ}
(we stress that~\eqref{Neag78jg:1},
\eqref{Neag78jg:2} and~\eqref{Neag78jg:3}
give that the contributions coming
from the linear interpolations are negligible).

Now we use
Lemma~\ref{CLEAN:LEMMA} to find
a clean point~$z_*\in [b_1-L,b_1]$ for~$Q_*$
and so, by~\eqref{ASR5}
and~\eqref{STIMA:AA1},
$$ \overline{a}\,\int_{-\infty}^{b_1-L} W(Q_*(x))\,dx\le \diamondsuit.$$
We insert this into~\eqref{8ikl34567765fghjkmnbvo99}
and we conclude that
$$ 0\le \int_{b_1-L}^{x_*-L} \big[a(x+L)-a(x)\big]\,W(Q_*(x))\,dx
+\diamondsuit.$$
Accordingly, recalling~\eqref{A:EX1},
\begin{equation}\label{987rvd567ijrt67asct8dfs}
0\le -\gamma \int_{b_1-L}^{x_*-L} W(Q_*(x))\,dx
+\diamondsuit,\end{equation}
for some~$\gamma>0$.
Now, $Q_*(b_1-L)$ lies close to~$\zeta_1$, while~$Q_*(x_*-L)$
lies close to~$\zeta_2$ (due to~\eqref{Neag78jg:1}): hence, by continuity and~\eqref{ZERI di W},
we have that~$W(Q_*(x))$ picks up a non-negligible contribution
in a subinterval of~$[b_1-L,x_*-L]$, namely
$$ \int_{b_1-L}^{x_*-L} W(Q_*(x))\,dx\ge c,$$
for some~$c>0$. This and~\eqref{987rvd567ijrt67asct8dfs} imply that~$0\le
-c\gamma+\diamondsuit$, which
is a contradiction when we make~$\diamondsuit$ as small as we wish.
This completes the proof
of Theorem~\ref{CHS} in case~\eqref{CASE1:pp}.\medskip

Now we assume that~\eqref{CASE2:pp} holds true. Then, 
by Lemma~\ref{CLEAN:LEMMA}
(recall~\eqref{L:DEF} and Remark~\ref{56edtycgshd2312989019381}), we know that there exist
clean points~$y_{*,-}\in
\left[b_{2j}+\frac{L}{4},\;b_{2j}+\frac{L}{2}\right]$
and~$y_{*,+}\in\left[b_{2j+1}-\frac{L}{2},b_{2j+1}-\frac{L}{4}\right]$
for~$Q_*$, such that~$|Q_*(y_{*,\pm})-\zeta_{j+1}|\le C\rho$, with~$C>0$.

Hence, by~\eqref{STIMA:AA2},
$$ \sup_{x\in [y_{*,-}, y_{*,+}]} |Q_*(x)-\zeta_{j+1}|\le \frac{r}{2}.$$
This and~\eqref{CASE2:pp} imply that~$p_*\in [b_{2j},y_{*,-}]
\cup [y_{*,+},b_{2j+1}]$.

So, we assume that
\begin{equation}
p_*\in [b_{2j},y_{*,-}],
\end{equation}
the other case being similar. We use again
Lemma~\ref{CLEAN:LEMMA}
to find an integer point~$\zeta$ and some
clean point~$x_*\in \left[
b_{2j}-\frac{L}{2},b_{2j}-\frac{L}{4}\right]$ for~$Q_*$,
such that
\begin{equation}\label{Neag78jg:1:BIS9}
|Q_*(x_*)-\zeta|\le C\rho,\end{equation} with~$C>0$. By
Proposition~\ref{UELO:j}, we know that either~$\zeta=\zeta_j$,
or~$\zeta=\zeta_{j+1}$.

But it cannot be that~$\zeta=\zeta_{j+1}$, otherwise,
by~\eqref{STIMA:AA2},
we would have that
$$ |Q_*(p_*)-\zeta_{j+1}|\le
\sup_{x\in [b_{2j}, b_{2j+1}-L]}|Q_*(x)-\zeta_{j+1}|
\le
\sup_{x\in [x_*,y_{*,+}]}|Q_*(x)-\zeta_{j+1}|\le\frac{r}{2},$$
in contradiction with~\eqref{CASE2:pp}.

Hence, we have that
\begin{equation}\label{Neag78jg:1:BIS9:2}
\zeta=\zeta_j.\end{equation}
Now we use again Lemma~\ref{CLEAN:LEMMA}
to find
a clean point~$z_*\in \left[b_{2j-1}-\frac{L}{2},\;b_{2j-1}-\frac{L}{4}\right]$ for~$Q_*$,
such that
$$ |Q_*(z_*)-\zeta_j|\le C\rho,$$
with~$C>0$. We refer to Figure~\ref{XYZ} for a sketch of
the situation discussed here (of course, the picture
is far from being realistic, since the horizontal scales
involved are much larger than the ones depicted).

\begin{figure}
    \centering
    \includegraphics[width=16.8cm]{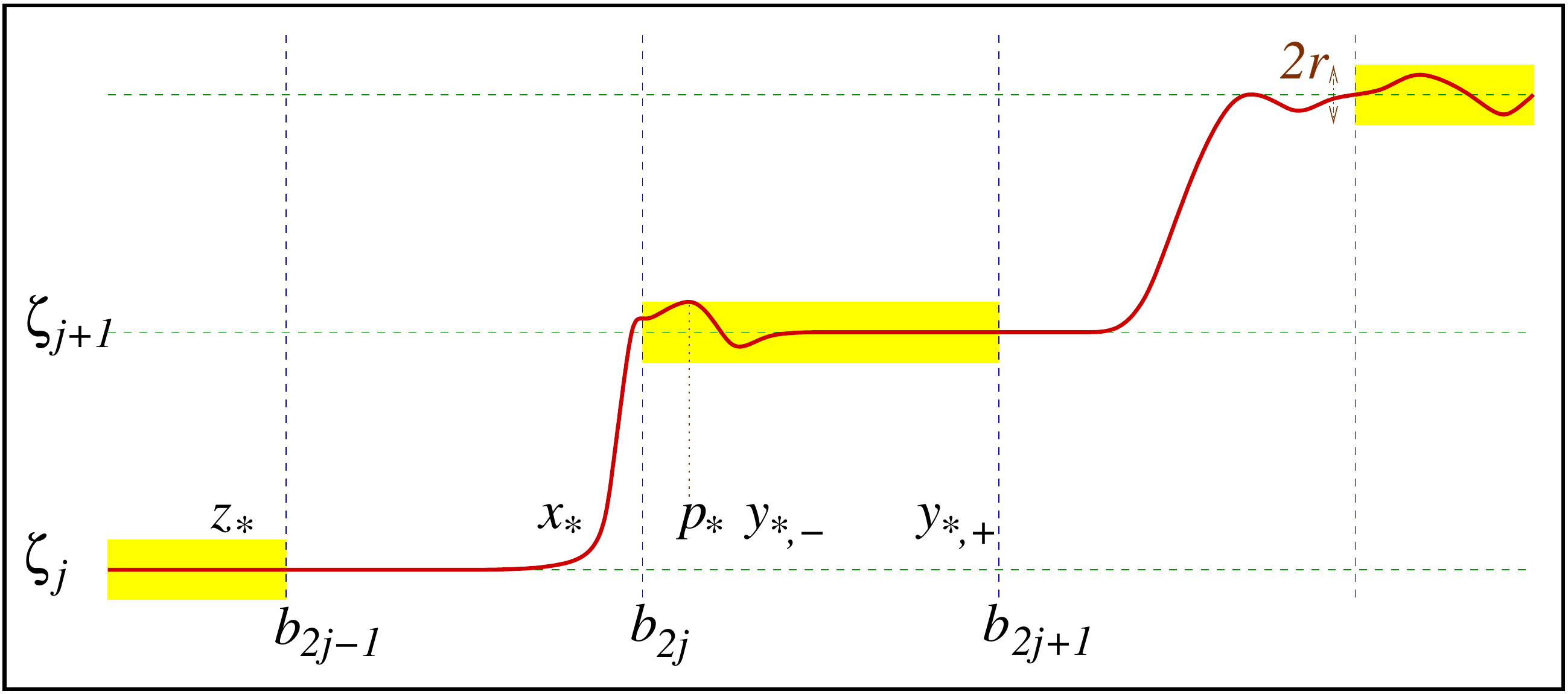}
    \caption{The points~$z_*$, $x_*$, $p_*$, $y_{*,-}$ and~$y_{*,+}$.}
    \label{XYZ}
\end{figure}

In this context, we can
define the following two competitors: we let~$Q_1(x)$ be
$$ \left\{
\begin{matrix}
Q_*(x) & {\mbox{ if }} x\le z_*, \\
Q_*(z_*)\,(z_*+1-x)+ \zeta_j\,(x-z_*) & {\mbox{ if }} x\in(z_*,\;z_*+1),\\
\zeta_j & {\mbox{ if }} x\in[z_*+1,\;x_*-1],\\
\zeta_j\,(x_*-x)+ Q_*(x_*)\,(x-x_*+1)
& {\mbox{ if }} x\in(x_*-1,\;x_*),\\
Q_*(x) & {\mbox{ if }} x\in[x_*,\; y_{*,-}],\\
Q_*( y_{*,-} )\,(y_{*,-}+1-x)+\zeta_{j+1}\,(x-y_{*,-})
& {\mbox{ if }} x\in(y_{*,-},\; y_{*,-}+1),\\
\zeta_{j+1} & {\mbox{ if }} x\in [y_{*,-}+1,\; y_{*,+}-1],\\
Q_*( y_{*,+} )\,(x-y_{*,+}+1)+\zeta_{j+1}\,(y_{*,+}-x)
& {\mbox{ if }} x\in(y_{*,+}-1,\; y_{*,+}),\\
Q_*(x) & {\mbox{ if }} x\ge y_{*,+},
\end{matrix}
\right. $$
and~$Q_2(x)$ be
$$ \left\{
\begin{matrix}
Q_1(x) & {\mbox{ if }} x\le x_*-1-L, \\
Q_1(x_*-1-L)\,(x_*-L-x)+
Q_1(x_*)\,(x-x_*+1+L)& {\mbox{ if }} x\in (x_*-1-L,\;x_*-L),\\
Q_1(x+L) & {\mbox{ if }} x\in [x_*-L, \;y_{*,-}],\\
Q_1(y_{*,-}+L)\,(y_{*,-}+1-x)+  Q_1(y_{*,-}+1)\,(x-y_{*,-})
& {\mbox{ if }} x\in (y_{*,-}, \;y_{*,-}+1),\\
Q_1(x) & {\mbox{ if }} x\ge y_{*,-}+1.
\end{matrix}
\right. $$
We observe that
\begin{equation}\label{8ydhuishHH}
I(Q_1)-I(Q_*) \le\diamondsuit,\end{equation}
thanks to~\eqref{DIAMOND:EQ}.
Also, by inspection, one sees that~$Q_1$, $Q_2\in\Gamma(\vec\zeta,\vec{b})$.
As a consequence, comparing the energy of the minimizer~$Q_*$ with the
one of the competitor~$Q_2$ and using~\eqref{8ydhuishHH},
\begin{equation}\label{89ui789098765678aa}
\begin{split}
0 \;&\le I(Q_2)-I(Q_*)\\
&= I(Q_2)-I(Q_1)+I(Q_1)-I(Q_*)
\\ &\le I(Q_2)-I(Q_1)+\diamondsuit\\
&\le E_{(-\infty, x_*-1-L)}(Q_1)+E_{(x_*-L,y_{*,-})}(Q_1)+
E_{(y_{*,-}+1,+\infty)}(Q_1) -E(Q_1)\\&\qquad
+\int_{x_*-L}^{y_{*,-}}
a(x)\,W(Q_1(x+L))\,dx
-\int_{x_*-1}^{y_{*,-}+1} a(x)\,W(Q_1(x))\,dx
+\diamondsuit 
\\ &\le 
\int_{x_*}^{y_{*,-}+L}
a(x-L)\,W(Q_1(x))\,dx
-\int_{x_*-1-L}^{y_{*,-}+1} a(x)\,W(Q_1(x))\,dx
+\diamondsuit 
.\end{split}
\end{equation}
Now we notice that if~$x\in [y_{*,-}+1,\;y_{*,-}+L]\subseteq
[y_{*,-}+1,\; y_{*,+}-1]$ we have that~$Q_1(x)=\zeta_{j+1}$ and so~$W(Q_1(x))=0$.
Using this information into~\eqref{89ui789098765678aa}, we obtain that
\begin{equation}\label{987HYTFhHHHHJ976yh}
\begin{split}
0\;&\le \int_{x_*}^{y_{*,-}+1}
a(x-L)\,W(Q_1(x))\,dx
-\int_{x_*-1-L}^{y_{*,-}+1} a(x)\,W(Q_1(x))\,dx
+\diamondsuit \\
&\le \int_{x_*}^{y_{*,-}+1}
\big[ a(x-L)-a(x)\big]\,W(Q_1(x))\,dx+\diamondsuit.\end{split}\end{equation}
Now we claim that
\begin{equation}\label{INDlO}
b_{2j}+L\in 24 L \N =\frac{2\pi}{\epsilon}\,\N.\end{equation}
To check this,
we recall~\eqref{DEF:b:sp} and we perform an inductive argument.
Indeed, we have that~$b_2+L= 23L+L=24L$, which checks~\eqref{INDlO}
when~$j=1$. Suppose now that~\eqref{INDlO} holds for some~$j$
and we prove it for the index~$j+1$. For this, we use~\eqref{DEF:b:sp}
to write
$$ b_{2j+2}+L =b_{2j+1}+L+22L=(b_{2j}+L)+50L+22L
\in 24 L \N,$$
as desired.

This proves~\eqref{INDlO}, from which we deduce that
the interval~$[b_{2j}-2L,\;b_{2j}+L]$
is a translation by~$\frac{2\pi k_j}{\epsilon}$ of~$[21L,24L]$,
for some~$k_j\in\N$. This, the periodicity of~$a$ and~\eqref{A:EX2} give that,
for any~$x\in [b_{2j}-2L,\;b_{2j}+L]$,
\begin{equation}\label{098iktg8iujnsdfggGG}
a(x-L)-a(x) \le -\gamma,
\end{equation}
for some~$\gamma>0$. Now, since~$
[x_*,\;y_{*,-}+1]\subseteq [b_{2j}-2L,\;b_{2j}+L]$,
we have that~\eqref{098iktg8iujnsdfggGG} holds for any~$x\in
[x_*,\;y_{*,-}+1]$.

Consequently, by~\eqref{987HYTFhHHHHJ976yh},
\begin{equation}\label{897yihjhgfdsdfgh77} 0\le -\gamma \int_{x_*}^{y_{*,-}+1}
W(Q_1(x))\,dx+\diamondsuit.\end{equation}
Since~$Q_1(x_*)=Q_*(x_*)$, which is close to~$\zeta_j$, by~\eqref{Neag78jg:1:BIS9}
and~\eqref{Neag78jg:1:BIS9:2}, and~$Q_1(y_{*,-}+1)=\zeta_{j+1}$,
it follows that the potential picks up some quantities when going from~$x_*$
to~$y_{*,-}+1$, hence~\eqref{897yihjhgfdsdfgh77} gives that~$0\le -c\gamma+\diamondsuit$,
for some~$c>0$.

This is a contradiction when we take~$\diamondsuit$
appropriately small, hence
we have completed the proof of Theorem~\ref{CHS}.\end{proof}

Now, we obtain Theorem~\ref{TH:MAIN} from
Theorem~\ref{CHS}. 

\begin{appendix}

\section{Proof of Lemma~\ref{SOB}}\label{CAMPA}

We follow the proof given in Section~8 of~\cite{guida}, by
keeping explicit track of the constants involved.

Given~$x_0\in J$ and~$\rho>0$, we define~$J_{x_0,\rho}:=(x_0-\rho,x_0+\rho)\cap J$,
$$ Q_{x_0,\rho}:= 
\frac{1}{|J_{x_0,\rho}|} \int_{J_{x_0,\rho}} Q(y)\,dy$$
and
\begin{equation}\label{9s777yYT}
[Q]_s :=\left(\sup_{{x_0\in J}\atop{\rho>0}}
\rho^{-2s} \int_{J_{x_0,\rho}} |Q(x)-Q_{x_0,\rho}|^2\,dx
\right)^{\frac12}.\end{equation}
First of all, for any~$\xi\in\R^n$ and any~$\rho>0$,
\begin{equation}\label{OSSE00}
|\xi-Q_{x_0,\rho}|^2
=\frac{1}{|J_{x_0,\rho}|^2} 
\left|\int_{J_{x_0,\rho}} \big[ \xi-Q(y)\big]\,dy\right|^2
\le \frac{1}{|J_{x_0,\rho}|}
\int_{J_{x_0,\rho}} \big|\xi-Q(y)\big|^2\,dy.
\end{equation}
Also,
we observe that, for any~$\rho\in(0,1]$,
\begin{equation}\label{OSSE0rho}
|J_{x_0,\rho}|\in [\rho,  \,2\rho].
\end{equation}
Now,
we claim that for any~$\overline{R}\in(0,1]$ and~$\underline{R}
\in(0,\overline{R})$,
\begin{equation}\label{90124J567K}
|Q_{x_0,\overline{R}}-Q_{x_0,\underline{R}}|\le 
\left( \frac{2}{\log 2\,\cdot\,\left( s-\frac12\right)}
+\sqrt{2}\right)\,
[Q]_s \,\overline{R}^{s-\frac12}.
\end{equation}
For this, we fix~$\rho_2>\rho_1>0$, with~$\rho_2\le1$,
we use~\eqref{OSSE00} with~$\xi:=Q_{x_0,\rho_2}$
and~$\rho:=\rho_1$, then we recall~\eqref{OSSE0rho},
and so we obtain that
\begin{equation}\label{G2.23}
\begin{split}
& |Q_{x_0,\rho_2}-Q_{x_0,\rho_1}|^2
\le \frac{1}{|J_{x_0,\rho_1}|}
\int_{J_{x_0,\rho_1}} \big|Q_{x_0,\rho_2}-Q(y)\big|^2\,dy
\\ &\qquad\le
\frac{1}{\rho_1}
\int_{J_{x_0,\rho_2}} \big|Q_{x_0,\rho_2}-Q(y)\big|^2\,dy
\le
\frac{\rho_2^{2s}}{\rho_1}\,[Q]_s^2.
\end{split}\end{equation}
Now we fix~$k\in\N$, $k\ge1$, such that
\begin{equation}\label{0cs78888a}
\frac{1}{2^k}\le \overline{R}^{-1}
\cdot\underline{R} 
\le\frac{1}{2^{k-1}}\end{equation}
and we
define~$R_i:=\overline{R}/2^i$, for any~$i\in\{0,\dots,k\}$.
Notice that
$$ R_k\le \underline{R}\le 2R_k,$$ due to~\eqref{0cs78888a}.
Then, we can use~\eqref{G2.23} with~$\rho_2:=\underline{R}$
and~$\rho_1:=R_k$ and find that
\begin{equation}\label{G2.23:BIS}
|Q_{x_0,\underline{R}}-Q_{x_0,R_k}|
\le
\frac{\underline{R}^{s}}{R_k^{\frac12}}\,[Q]_s\le
\sqrt{2}\,\underline{R}^{s-\frac12}\,[Q]_s.
\end{equation}
Now we use~\eqref{G2.23} with~$\rho_2:=R_{i}$
and~$\rho_1:= R_{i+1}$ and we add up. In this way, we conclude that
\begin{equation}\label{G2.23:BIS:BIS}
\begin{split}
|Q_{x_0,R_0} - Q_{x_0, R_k}|\;
&\le \sum_{i=0}^{k-1} |Q_{x_0,R_{i}} - Q_{x_0, R_{i+1}}|
\le[Q]_s\,\sum_{i=0}^{k-1} \frac{R_{i}^{s}}{R_{i+1}^{\frac12}}
\le \sqrt{2}\,\overline{R}^{s-\frac12}\,
[Q]_s\,\sum_{i=0}^{+\infty} \frac{1}{2^{\left(s-\frac12\right)i}}
\\ &\qquad=\sqrt{2}\,\overline{R}^{s-\frac12}\,
[Q]_s\,\frac{2^{s-\frac12}}{2^{s-\frac12}-1}\le
\frac{2 \,\overline{R}^{s-\frac12}\,
[Q]_s}{\log 2\,\cdot\,\left( s-\frac12\right)}.
\end{split}
\end{equation}
Hence~\eqref{G2.23:BIS}
and~\eqref{G2.23:BIS:BIS} give that
$$|Q_{x_0,R_0} - Q_{x_0,\underline{R}}|
\le
\frac{2 \,\overline{R}^{s-\frac12}\,
[Q]_s}{\log 2\,\cdot\,\left( s-\frac12\right)}+
\sqrt{2}\,\underline{R}^{s-\frac12}\,[Q]_s
\le 
\left( \frac{2}{\log 2\,\cdot\,\left( s-\frac12\right)}
+\sqrt{2}\right)\,
[Q]_s \,\overline{R}^{s-\frac12}.$$
Noticing now that~$R_0=\overline{R}$, we obtain~\eqref{90124J567K},
as desired.

Now we use~\eqref{OSSE00} with~$\xi:=Q(x)$ and we integrate
over~$x\in J_{x_0,\rho}$, to find that
\begin{equation}\label{0oau8JJFR}
\int_{J_{x_0,\rho}} |Q(x)-Q_{x_0,\rho}|^2\,dx
\le \frac{1}{|J_{x_0,\rho}|}
\iint_{J_{x_0,\rho}^2} \big|Q(x)-Q(y)\big|^2\,dx\,dy
\le \frac{1}{\rho}
\iint_{J_{x_0,\rho}^2} 
\big|Q(x)-Q(y)\big|^2\,dx\,dy,\end{equation}
where the last inequality comes from~\eqref{OSSE0rho}.
Notice now that if~$x$, $y\in
J_{x_0,\rho}\subseteq(x_0-\rho,x_0+\rho)$,
then~$|x-y|\le2\rho$. Hence, by~\eqref{0oau8JJFR},
\begin{equation}\label{GG:83}
\begin{split}
\int_{ J_{x_0,\rho} } |Q(x)-Q_{x_0,\rho}|^2\,dx
\;&\le 2^{1+2s}\,\rho^{2s}\,
\iint_{J_{x_0,\rho}^2} 
\frac{\big|Q(x)-Q(y)\big|^2}{|x-y|^{1+2s}}\,dx\,dy\\
&\le 8\,\rho^{2s}\,[Q]_{H^s(J)}^2.\end{split}\end{equation}
By comparing~\eqref{9s777yYT}
with~\eqref{GG:83} we deduce that
\begin{equation}\label{CAMP2}
[Q]_s\le\sqrt{8}\,[Q]_{H^s(J)}.
\end{equation}
{F}rom~\eqref{90124J567K}
and~\eqref{CAMP2}, we obtain that
\begin{equation}\label{098uj6789}
|Q_{x_0,\overline{R}}-Q_{x_0,\underline{R}}|\le
\sqrt{8}
\left( \frac{2}{\log 2\,\cdot\,\left( s-\frac12\right)}
+\sqrt{2}\right)\,[Q]_{H^s(J)}\,\overline{R}^{s-\frac12}
.\end{equation}
Now we claim that
\begin{equation}\label{CONT:Q:J11}
{\mbox{$Q$ is continuous in~$J$.}}
\end{equation}
For this, we use~\eqref{098uj6789}
and the assumption that~$s>\frac12$,
to find that the sequence of functions~$G_\rho(x):=Q_{x,\rho}$
is Cauchy in~$L^\infty(J)$ and so there exists a subsequence~$\rho_j\to0$
such that
\begin{equation}\label{CONT:G:J11:PRE}
{\mbox{$G_{\rho_j}$ converges to some~$G$ uniformly in~$J$,
as~$j\to+\infty$.}}\end{equation}
Now we observe that
\begin{equation}\label{CONT:G:J11}
{\mbox{$G_\rho$ is continuous in~$J$,}}
\end{equation}
for any fixed~$\rho\in(0,1]$. Indeed, we know that~$Q\in L^1(J)$
(see e.g. formula~(6.21) in~\cite{guida}). Therefore,
if~$x_k\in J$ and~$x_k\to x_\infty$
as~$k\to+\infty$, we deduce from the Dominated
Convergence Theorem that
$$ \lim_{k\to+\infty} \frac{1}{|J_{x_\infty,\rho}|}
\int_{J_{x_k,\rho}} Q(y)\,dy = 
\frac{1}{|J_{x_\infty,\rho}|}
\int_{J_{x_\infty,\rho}} Q(y)\,dy.$$
Accordingly
\begin{eqnarray*}
&& \lim_{k\to+\infty} \big|
G_\rho(x_k)-G_\rho(x_\infty)\big|\\
&\le& \lim_{k\to+\infty} 
\left|
\frac{1}{|J_{x_k,\rho}|}
\int_{J_{x_k,\rho}} Q(y)\,dy
-\frac{1}{|J_{x_\infty,\rho}|}
\int_{J_{x_k,\rho}} Q(y)\,dy\right|
+\frac{1}{|J_{x_\infty,\rho}|}
\left|
\int_{J_{x_k,\rho}} Q(y)\,dy
-
\int_{J_{x_\infty,\rho}} Q(y)\,dy\right|\\
&\le& 
\lim_{k\to+\infty} \left| \frac{1}{|J_{x_k,\rho}|}
-\frac{1}{|J_{x_\infty,\rho}|}\right|\,
\int_{J} Q(y)\,dy\\
&=&0,\end{eqnarray*}
and this gives~\eqref{CONT:G:J11}.

By~\eqref{CONT:G:J11:PRE} and~\eqref{CONT:G:J11},
we obtain that
\begin{equation}\label{8s8III}
{\mbox{$G$ is continuous.}}\end{equation}
Now, for any~$x$ in the interior of the segment~$J$,
we have that~$J_{x,\rho_j} = (x-\rho_j,x+\rho_j)$
if $j$ is large enough and so, if~$x$ is also a Lebesgue point
for~$Q$,
\begin{eqnarray*}
&& G(x) = \lim_{\rho_j\to0} G_{\rho_j}(x)
=\lim_{\rho_j\to0} Q_{x,\rho_j}
=\lim_{\rho_j\to0} 
\frac{1}{|J_{x,\rho_j}|} \int_{J_{x,\rho_j}} Q(y)\,dy\\
&&\qquad=\lim_{\rho_j\to0}
\frac{1}{2\rho_j} \int_{x-\rho_j}^{x+\rho_j} Q(y)\,dy
= Q(x).\end{eqnarray*}
Accordingly, $Q$ and~$G$ coincide in all the Lebesgue
points of the interior of~$J$ and thus almost everywhere in~$J$.
Hence, from~\eqref{8s8III} (and possibly redefining~$Q$
in a negligible set), we conclude that~\eqref{CONT:Q:J11}
holds true.

Thanks to~\eqref{CONT:Q:J11}, we can now send~$\underline{R}\to0$
in~\eqref{098uj6789} and obtain that
\begin{equation}\label{90k89000}
|Q_{x_0,\overline{R}}-Q(x_0)|\le
\sqrt{8}
\left( \frac{2}{\log 2\,\cdot\,\left( s-\frac12\right)}
+\sqrt{2}\right)\,[Q]_{H^s(J)}\,
\overline{R}^{s-\frac12},\end{equation}
for any~$\overline{R}\in(0,1]$ and~$x_0\in J$.

Now we fix~$X$, $Y\in J$ and we take~$\overline{R}:=2|X-Y|$.
Then, we obtain from~\eqref{90k89000} (applied
with~$x_0:=X$ and with~$x_0:=Y$) that
\begin{equation}\label{90k89000:BIS}
|Q(X)-Q_{X,\overline{R}}|+
|Q_{Y,\overline{R}}-Q(Y)|
\le
8\,
\left( \frac{2}{\log 2\,\cdot\,\left( s-\frac12\right)}
+\sqrt{2}\right)\,[Q]_{H^s(J)}\,
|X-Y|^{s-\frac12}.\end{equation}
Now we take~$P:=\frac{X+Y}{2}$ 
and we notice that~$(P-\overline{R},P+\overline{R})$
contains the segment joining~$X$ and~$Y$,
which lies in~$J$ and has length~$\overline{R}/2$, therefore
\begin{equation}\label{9044GHjKK}
|J_{P,\overline{R}}|\ge \frac{ {\overline{R}} }{2}.
\end{equation}
Now we fix~$z\in J_{P,\overline{R}}$.
By~\eqref{OSSE00}, used here with~$x_0:=X$ and~$
\rho:=\overline{R}$ and~$\xi:=Q(z)$, we see that
$$ |Q(z)-Q_{X,\overline{R}}|^2
\le \frac{1}{|J_{X,\overline{R}}|}
\int_{J_{X,\overline{R}}} \big|Q(z)-Q(y)\big|^2\,dy.$$
Now we observe that~$\overline{R}\le2$ and so, by~\eqref{OSSE0rho},
$$ |J_{X,\overline{R}}| \ge |J_{X,\overline{R}/2}|\ge\frac{\overline{R}}{2}$$
and therefore
$$ |Q(z)-Q_{X,\overline{R}}|^2
\le \frac{\;2\;}{\overline{R}}
\int_{ J_{X,\overline{R}} } \big|Q(z)-Q(y)\big|^2\,dy
\le
\frac{\;2\;}{\overline{R}}
\int_{ J_{P,2\overline{R}} } \big|Q(z)-Q(y)\big|^2\,dy
.$$
Similarly
$$ |Q(z)-Q_{Y,\overline{R}}|^2
\le \frac{\;2\;}{\overline{R}}
\int_{ J_{P,2\overline{R}} } \big|Q(z)-Q(y)\big|^2\,dy.$$
Therefore
\begin{eqnarray*}
|Q_{X,\overline{R}}-Q_{Y,\overline{R}}|^2 &\le&
2\Big( |Q_{X,\overline{R}}-Q(z)|^2+
|Q(z)-Q_{Y,\overline{R}}|^2
\Big)\\
&\le& \frac{\;8\;}{\overline{R}}
\int_{J_{P,2\overline{R}}} \big|Q(z)-Q(y)\big|^2\,dy
.\end{eqnarray*}
Thus, by integrating over~$z\in J(P,\overline{R})$
and recalling~\eqref{9044GHjKK},
$$ \frac{\overline{R}}{\;2\;}\,
|Q_{X,\overline{R}}-Q_{Y,\overline{R}}|^2
\le \frac{\;8\;}{\overline{R}}
\iint_{J_{P,2\overline{R}}^2} \big|Q(z)-Q(y)\big|^2\,dz\,dy.$$
As a consequence
\begin{eqnarray*}
&& |Q_{X,\overline{R}}-Q_{Y,\overline{R}}|^2
\le \frac{\;16\;}{\overline{R}^2}
\iint_{ {J_{P,2\overline{R}}^2} } \big|Q(z)-Q(y)\big|^2\,dz\,dy\\
&&\qquad\le
\frac{\;16\;}{\overline{R}^2}
\iint_{ {J_{P,2\overline{R}}^2} } (4\overline{R})^{1+2s}
\frac{\big|Q(z)-Q(y)\big|^2}{|z-y|^{1+2s}}\,dz\,dy
\le 4^{3+2s} \,\overline{R}^{2s-1} \,[Q]^2_{H^s(J)}\le
4^{6} \,|X-Y|^{2s-1} \,[Q]^2_{H^s(J)}.\end{eqnarray*}
Using this and~\eqref{90k89000:BIS}, we obtain that
\begin{eqnarray*}
|Q(X)-Q(Y)|&\le&|Q(X)-Q_{X,\overline{R}}|+|Q_{X,\overline{R}}-
Q_{Y,\overline{R}}|+
|Q_{Y,\overline{R}}-Q(Y)|
\\ &\le&
8\,
\left( \frac{2}{\log 2\,\cdot\,\left( s-\frac12\right)}
+4\right)\,[Q]_{H^s(J)}\,
|X-Y|^{s-\frac12}.\end{eqnarray*}
This proves~\eqref{LA:99:001}.

\section{Proof of Lemma~\ref{LAKK}}\label{MATTEO}

We notice that~$Q\in C^{0,s-\frac12}([0,1))$,
thanks to Lemma~\ref{SOB}, hence the condition~$Q(0)=0$
is attained continuously and, more precisely, for any~$y\in[0,1]$,
$$ |Q(y)|\le S_0\,[Q]_{H^s([0,1))}\, |y|^{s-\frac12}.$$
Accordingly, if we define
$$ V(x):=\frac{1}{x}\int_0^x \big( Q(x)-Q(y)\big)\,dy
= Q(x)-\frac{1}{x}\int_0^x Q(y)\,dy,$$
we have that, for any~$x\in[0,1]$,
\begin{equation}\label{7udw777PP}
|V(x)|\le S_0\, [Q]_{H^s([0,1))}\,\Big( |x|^{s-\frac12}+
\frac{1}{x}\int_0^x |y|^{s-\frac12}\,dy\Big)=C\,S_0\,
|x|^{s-\frac12} ,\end{equation}
for some~$C>0$.
Moreover, by H\"older inequality,
$$ |V(x)|^2\le \frac{1}{x}\int_0^x \big| Q(x)-Q(y)\big|^2 \,dy.$$
We also notice that if~$y\in [0,x]$ then~$x\ge x-y=|x-y|$.
As a consequence, 
\begin{equation}\label{V:X:ST:0}
\begin{split}
& \int_0^{\beta} 
x^{-2s} |V(x)|^2\,dx
\le \int_0^{\beta} x^{-1-2s} 
\left[ \int_0^x \big| Q(x)-Q(y)\big|^2 \,dy\right]\,dx
\\ &\qquad\le \int_0^{\beta} 
\left[ \int_0^x |x-y|^{-1-2s}\big| Q(x)-Q(y)\big|^2 \,dy\right]\,dx
.\end{split}
\end{equation}
Furthermore,
$$ \int_{\beta}^{+\infty} x^{-2s} |V(x)|^2\,dx
\le \frac{ \|V\|_{L^\infty((0,+\infty),\R^n)} }{(2s-1)\,\beta^{2s-1} }.$$
Hence, noticing that~$\|V\|_{L^\infty((0,+\infty),\R^n)}\le 2\|Q\|_{L^\infty((0,+\infty),\R^n)}$,
we find that
$$ \int_{\beta}^{+\infty} x^{-2s} |V(x)|^2\,dx
\le \frac{ 2\|Q\|_{L^\infty((0,+\infty),\R^n)} }{(2s-1)\,\beta^{2s-1} }.$$
{F}rom this and~\eqref{V:X:ST:0}, we obtain that
\begin{equation}\label{V:X:ST}
\int_0^{+\infty}
x^{-2s} |V(x)|^2\,dx
\le 
\iint_{(0,\beta)\times(0,x)}
\frac{\big| Q(x)-Q(y)\big|^2}{|x-y|^{1+2s}} \,dx\,dy+
\frac{ 2\|Q\|_{L^\infty((0,+\infty),\R^n)} }{(2s-1)\,\beta^{2s-1} }.
\end{equation}
Now we recall a classical inequality due to Hardy, namely
that for any~$\alpha>0$ and any measurable function~$f$,
we have that
\begin{equation}\label{HAR}
\int_0^{+\infty} 
x^{-1-2\alpha } \left[ \int_0^x y^{-1}\,|f(y)|\,dy \right]^{2}\,dx
\le \alpha^{-2}\,\int_0^{+\infty}
y^{-1-2\alpha }\,|f(y)|^2\,dy .
\end{equation}
To prove it, we make the substitution~$y=tx$ twice
and we apply the Minkowski integral inequality to
the function~$g(x,t):= x^{-\frac12 -\alpha} t^{-1}\,|f(tx)|$.
In this way, we obtain that
\begin{eqnarray*}
&& \int_0^{+\infty}
x^{-1-2\alpha } \left[ \int_0^x y^{-1}\,|f(y)|\,dy \right]^{2}\,dx
=
\int_0^{+\infty}
x^{-1-2\alpha } \left[ \int_0^1 t^{-1}\,|f(tx)|\,dt \right]^{2}\,dx\\
&&\qquad=
\int_0^{+\infty}
\left[ \int_0^1 g(x,t)\,dt \right]^{2}\,dx
\le \left[ \int_0^{1}
\left[ \int_0^{+\infty} |g(x,t)|^2\,dx \right]^{\frac12}\,dt \right]^2
\\ &&\qquad=
\left[ \int_0^{1}
\left[ \int_0^{+\infty} 
x^{-1-2\alpha } t^{-2}\,|f(tx)|^2\,dx \right]^{\frac12}\,dt \right]^2
\\ &&\qquad=
\left[ \int_0^{1}
\left[ \int_0^{+\infty}
y^{-1-2\alpha } t^{2\alpha -2}\,|f(y)|^2\,dy \right]^{\frac12}\,dt \right]^2
\\ &&\qquad=\left[ \int_0^{1} t^{\alpha-1}
\left[ \int_0^{+\infty}
y^{-1-2\alpha } \,|f(y)|^2\,dy \right]^{\frac12}\,dt \right]^2
\\ &&\qquad= \frac{1}{\alpha^2}
\int_0^{+\infty}
y^{-1-2\alpha } \,|f(y)|^2\,dy . 
\end{eqnarray*}
This proves~\eqref{HAR}.

Now we use~\eqref{HAR} with~$f:=V$ and~$\alpha:=s-\frac12$ and we obtain
that
\begin{equation}\label{HAR:2}
\int_0^{+\infty}
x^{-2s} \left[ \int_0^x y^{-1}\,|V(y)|\,dy \right]^{2}\,dx
\le \frac{4}{(2s-1)^2}\,\int_0^{+\infty}
y^{-2s}\,|V(y)|^2\,dy .
\end{equation}
Now we define
$$ Z(x):= \int_0^x y^{-1}\,V(y)\,dy$$
and we deduce from~\eqref{HAR:2} that
\begin{equation}\label{HAR:3}
\int_0^{+\infty}
x^{-2s }\, |Z(x)|^{2}\,dx
\le \frac{4}{(2s-1)^2}\,\int_0^{+\infty}
y^{-2s }\,|V(y)|^2\,dy .
\end{equation}
Also, recalling~\eqref{7udw777PP}, we have that, for any~$x\in[0,1]$,
$|Z(x)|$ is controlled by~$|x|^{s-\frac12}$, which gives that~$Z(0)=0$. Hence, if
we define
$$ F(x):= V(x)+Z(x)-Q(x),$$
recalling again~\eqref{7udw777PP} we find that~$F(0)=0$.
Moreover,
$$ F'(x)= 
Q'(x)+\frac{1}{x^2}\int_0^x Q(y)\,dy
-\frac{Q(x)}{x} +
\frac{V(x) }{x} -Q'(x)=0.$$
As a consequence, $F$ is constantly equal to zero
in~$[0,+\infty)$, which says that
$$ Q(x)= V(x)+Z(x),$$
for any~$x\ge0$. This implies that
$$ |Q(x)|^2 \le \big( |V(x)| +|Z(x)|\big)^2
\le 2\big( |V(x)|^2 +|Z(x)|^2\big).$$
Therefore, by~\eqref{HAR:3},
\begin{eqnarray*}
\int_0^{+\infty}
x^{-2s }\, |Q(x)|^{2}\,dx &\le&
2\left(
\int_0^{+\infty}
x^{-2s }\, |V(x)|^{2}\,dx+
\int_0^{+\infty}
x^{-2s }\, |Z(x)|^{2}\,dx
\right) \\
&\le& 2\left(1+
\frac{4}{(2s-1)^2}\right)\,\int_0^{+\infty}
y^{-2s }\,|V(y)|^2\,dy .\end{eqnarray*}
This and~\eqref{V:X:ST} imply the thesis of
Lemma~\ref{LAKK}.

\end{appendix}

\section*{Acknowledgments}
It is a pleasure to thank Matteo Cozzi for very useful conversations
and the University of Texas at Austin for the warm hospitality.
 
This work has been supported by Alexander von Humboldt Foundation, 
NSF grant DMS-1262411 ``Regularity and stability results in variational problems'' and
ERC grant 277749 ``EPSILON Elliptic PDE's and Symmetry of Interfaces and Layers for Odd Nonlinearities". 

\bibliography{rab}
\bibliographystyle{is-alpha}

\vspace{2mm}

\end{document}